\titlespacing*{\section}{0pt}{1.5ex plus 0.5ex minus 0.3ex}{0.8ex plus 0.2ex}
\titlespacing*{\subsection}{0pt}{1.2ex plus 0.4ex minus 0.2ex}{0.6ex plus 0.2ex}
\titlespacing*{\subsubsection}{0pt}{1.0ex plus 0.3ex minus 0.2ex}{0.5ex plus 0.1ex}
\def\thm@space@setup{%
  \thm@preskip=5pt plus 2pt minus 2pt
  \thm@postskip=5pt plus 2pt minus 2pt
}
\setlist{topsep=2pt,itemsep=1pt,parsep=1pt}
\newcommand{\ntw}{\nu^{(2)}}
\newcommand{\E}{\mathbb{E}}
\newcommand{\tml}{\tilde{\mathfrak{L}}}
\newtheorem{thm}{Theorem}
\numberwithin{thm}{section}
\newaliascnt{lmm}{thm}
\newtheorem{lmm}[lmm]{Lemma}
\crefname{lmm}{Lemma}{Lemmas}
\Crefname{lmm}{Lemma}{Lemmas}
\newaliascnt{cor}{thm}
\newtheorem{cor}[cor]{Corollary}
\crefname{cor}{Corollary}{Corollaries}
\Crefname{cor}{Corollary}{Corollaries}
\newaliascnt{prop}{thm}
\newtheorem{prop}[prop]{Proposition}
\crefname{prop}{Proposition}{Propositions}
\Crefname{prop}{Proposition}{Propositions}
\newaliascnt{problem}{thm}
\newtheorem{defn}{Definition}
\newtheorem{assm}{Assumption}
\theoremstyle{definition}
\newtheorem{remark}{Remark}
\newtheorem{ex}{Example}
\numberwithin{remark}{section}
\numberwithin{ex}{section}
\numberwithin{assm}{section}
\numberwithin{defn}{section}
\newcommand{\R}{\mathbb{R}}
\newcommand{\ee}{\mathbb{E}}
\newcommand{\N}{\mathbb{N}} 
\newcommand{\tb}{\tilde{b}}
\renewcommand{\P}{\mathbb{P}}
\newcommand{\tq}{\tilde{q}}
\newcommand{\cM}{\mathcal{M}}
\newcommand{\tp}{\tilde{p}}
\DeclareMathOperator*{\arginf}{arg\,inf}
\newcommand{\ta}{\tilde{a}}
\newcommand{\tH}{\widetilde{H}}
\numberwithin{equation}{section}
\newcommand{\mar}{\mathfrak{R}}
\newcommand{\tw}{\widetilde{W}}
\newcommand{\vep}{\varepsilon}
\title{LDP for Inhomogeneous U-Statistics}
\author[1]{Sohom Bhattacharya}
\author[2]{Nabarun Deb}
\author[3]{Sumit Mukherjee}
\affil[1]{Department of Statistics, University of Florida, \texttt{bhattacharya.s@ufl.edu}}
\affil[2]{Booth School of Business, University of Chicago, \texttt{nabarun.deb@chicagobooth.edu}}
\affil[3]{Department of Statistics, Columbia University, \texttt{sm3949@columbia.edu}}
\date{}
\begin{document}

\maketitle

\begin{abstract}
In this paper we derive a Large Deviation Principle (LDP) for  inhomogeneous U/V-statistics of a general order.
Using this, we derive a LDP for two types of statistics: random multilinear forms, and number of monochromatic copies of a subgraph. We show that the corresponding rate functions in these cases can be expressed as a variational problem over a suitable space of functions.  We use the tools developed to study Gibbs measures with the corresponding Hamiltonians, which include tensor generalizations of both Ising (with non-compact base measure) and Potts models. For these Gibbs measures, we establish scaling limits of log normalizing constants, and weak laws in terms of weak* topology, which are of possible independent interest.
\end{abstract}

\noindent\textit{MSC2020 subject classifications.} 60F10, 05C80, 82B20.\\
\textit{Keywords and phrases.} Graph limits, Large deviations, U-Statistics, tensor Ising/Potts model.

\section{Introduction}

\noindent Suppose ${\bf X}:=(X_1,\ldots,X_n)$ are i.i.d.~random variables from a non-degenerate probability measure $\mu$ on a Polish space $\mathcal{X}$ equipped with the Borel $\sigma$-field. Let $H=(V(H),E(H))$ be a finite graph with $v:=|V(H)|\ge 2$ vertices labeled $[v]=\{1,2,\ldots,v\}$, and maximum degree $\Delta$. Let $\phi:\mathcal{X}^v\mapsto \R$ be a measurable function, not necessarily symmetric in its arguments. In this paper we study the following inhomogeneous U-statistic :
\begin{align}\label{eq:U}
	U_n({\bf X}):=\frac{1}{n^v}\sum_{(i_1,\ldots,i_v)\in \mathcal{S}(n,v) }\phi(X_{i_1},\ldots,X_{i_v})\prod_{(a,b)\in E(H)}Q_n(i_a,i_b),
\end{align}
where $\mathcal{S}(n,v)$ is the set of all distinct tuples from $[n]^v$ (so that $|\mathcal{S}(n,v)|=v!\binom{n}{v}$), and $Q_n$ is a symmetric $n\times n$ matrix with $0$ on the diagonal. 
Related to $U_n({\bf X})$ is the inhomogeneous $V$-statistic:
\begin{equation}\label{eq:v}
V_n({\bf X}):=\frac{1}{n^v}\sum_{(i_1,\ldots ,i_v)\in [n]^v} \phi(X_{i_1},\ldots ,X_{i_v})\prod\limits_{(a,b)\in E(H)} Q_n(i_a,i_b).
\end{equation}
The only difference between $U_n({\bf X})$ and $V_n({\bf X})$ is that the sum is over a distinct set of indices $\mathcal{S}(n,v)$ in the former, as opposed to allowing them to repeat over the set $[n]^v$ in the latter. 
Several examples of statistics of interest can be expressed in the form \eqref{eq:U} and \eqref{eq:v}. Below we give some examples of inhomogeneous U-statistics, omitting the corresponding V-statistics version for brevity.

\begin{itemize}
	\item
	If $H=K_2$ is an edge and $\phi(x,y)=xy$, then 
	$$U_n({\bf X})=\frac{1}{n^2}\sum_{i\ne j} Q_n(i,j) X_i X_j$$ is a random quadratic form. In particular if $\mathcal{X}=\{-1,1\}$, then $U_n({\bf X})$ is the Hamiltonian/sufficient statistic for the Ising model~\cite{ising1925beitrag,Basak2017} (with coupling matrix $Q_n$), which is a Gibbs distribution on $\{-1,1\}^n$.
	
	\item
	If $H=K_3$ is a triangle, and $\phi(x,y,z)=xyz$, then
	$$U_n({\bf X})=\frac{1}{n^3}\sum_{i_1\ne i_2\ne i_3} Q_n(i_1,i_2) Q_n(i_2,i_3)Q_n(i_3,i_1)X_{i_1} X_{i_2}X_{i_3}$$ is a random cubic form and the Hamiltonian/sufficient statistic for the tensor Ising model (see~\cite{mukherjee2020estimation,mukherjee2020replica}; also see~\cref{sec:multlin}). 
	
	\item
	If $H=K_2$, and $\phi(x,y)=1\{x=y\}$, and $\mathcal{X}=\{1,2,\cdots,c\}$ is a finite set, then
	$$U_n({\bf X})=\frac{1}{n^2}\sum_{i\ne j} Q_n(i,j)1\{X_i=X_j\}$$
	is the Hamiltonian/sufficient statistic for the Potts model (which is a generalization of the Ising model) with $c$ colors (see~\cite{potts1952some,ellis1990limit}; also see~\cref{sec:tri}), and coupling matrix $Q_n$.
	
	\item
	If $H=K_3$, and $\phi(x,y,z)=1\{x=y=z\}$, then
	$$U_n({\bf X})=\frac{1}{n^3}\sum_{i_1\ne i_2\ne i_3} Q_n(i_1,i_2) Q_n(i_2,i_3) Q_n(i_3,i_1)1\{X_{i_1}=X_{i_2}=X_{i_3}\}.$$
	In this case, if $\mathcal{X}=\{1,2,\cdots,c\}$ is a finite set, and $Q_n$ is the (scaled) adjacency matrix of a graph $G_n$, then $U_n({\bf X})$ counts the (scaled) number of monochromatic triangles in $G_n$, where the vertices of $G_n$ receive colors from $\{1,2,\cdots,c\}$ according to the law $\mu$, independent of other vertices (see~\cref{sec:tri}). In particular, choosing $\mu$ as the uniform distribution on $\mathcal{X}=\{1,2,\cdots,c\}$ corresponds to a uniformly random coloring of the vertices.
\end{itemize}

\textcolor{black}{For the special choice $Q_n(i,j)=1\{i\ne j\}$, both expressions \eqref{eq:U} and \eqref{eq:v} reduce to the homogeneous versions of U/V-statistics, the \emph{Large Deviation Principle (LDP)}  for which is well known.(c.f.~\cite{arcones1992large,eichelsbacher1995large,eichelsbacher2002large,eichelsbacher2003moderate}). But otherwise, the LDP for a general random variable of the form \eqref{eq:U} or \eqref{eq:v} for a general sequence of matrices $\{Q_n\}_{n\ge 1}$ is not known, barring the very special case when $\mathcal{X}=\{0,1\}$, and $\{Q_n\}_{n\ge 1}$ is a sequence of \enquote{dense} matrices (i.e.~adjacency matrices of dense graphs), for the very special choice $\phi(x_1,\cdots,x_v)=\prod_{a=1}^v x_a$ (see~\cite[Thm 1.2]{mukherjee2020replica}). In particular, the LDP for the 4 examples outlined above is not known, despite these statistics receiving significant interest in the literature. The main result of this paper is a LDP for inhomogeneous U-statistics/V-statistics of this form with a good rate function in a very broad generality (see~\cref{thm:main} and~\cref{cor:main}), which allows $\mathcal{X}$ to be a general Polish space, $\phi(\cdot)$ to be an arbitrary function (satisfying) mild moment conditions (see~\eqref{eq:tailp}), and $\{Q_n\}_{n\ge1}$ to be a sequence of matrices which satisfy much milder restrictions (see~\eqref{eq:q} in~\cref{thm:main} and \eqref{eq:pp}/\eqref{eq:pp1}/\eqref{eq:pp2} in~\cref{cor:main}).}   
Below we recall the definition of an LDP (cf. \cite{DZ}):

\par 
Let $\{T_n\}_{n\ge 1}$ be a sequence of random variables taking values in a Polish space equipped with a Borel $\sigma$-field. We say that $\{T_n\}_{n\ge 1}$ satisfies a LDP {\color{black} with speed $n$ and }a good rate function $I(\cdot)$ taking values in $[0,\infty]$, if the level sets of $I(.)$ are compact, and for any Borel set $F$ we have
\begin{align*}
	-\inf_{x\in {F^\circ}}I(x)\le&\liminf_{n\to\infty}\frac{1}{n}\log \P(T_n\in F)\\
	\le &\limsup_{n\to\infty} \frac{1}{n}\log \P(T_n\in F)\le -\inf_{x\in \bar{F}}I(x),
\end{align*}
{\color{black} where $F^\circ$ and $\bar{F}$ denote the interior and closure of the set $F$ respectively (in $\R$).} This immediately allows us to study Gibbs measures with inhomogeneous U-statistics in the Hamiltonian (see ~\cref{lem:gibbs}).
\\

Utilizing our main result described above, we derive the LDP for two 
examples of interest, which are multilinear forms (see~\cref{thm:multilinear}) and number of monochromatic copies of subgraphs (see~\cref{thm:monochromatic}). In both these examples, we express the rate function as a constrained optimization problem over certain classes of functions. As applications, we study Gibbs distributions with multi-linear forms (see~\cref{thm:multilinear_gibbs}) and monochromatic copies of subgraphs (see~\cref{thm:monochromatic_gibbs}) as sufficient statistics.
As indicated above, this class of models generalizes the Ising and Potts models in several directions, going beyond quadratic interactions, compact state spaces, and uniform base measures. In both these cases, we compute the asymptotics of the log-partition function in terms of an optimization problem over suitable function classes. We also derive weak laws for these Gibbs measures, in the topology of weak* convergence .
\\

\subsection{Main results}\label{sec:mainres}
To establish the large deviation of $U_n({\bf X})/V_n({\bf X})$, we will assume throughout that the sequence of matrices $\{Q_n\}_{n\ge 1}$ converge in the weak cut distance. Cut distance/cut metric has been introduced in the combinatorics literature to study limits of graphs and matrices, see~\cite{FriezeKannan1999}, and have received significant attention in the recent literature (\cite{bc_lpi,borgs2018p,borgsdense1,borgsdense2}).  Below we formally introduce the notion of strong and weak cut distances used in this paper.

\begin{defn}\label{def:defirst}
	Suppose $\mathcal{W}$ is the space of all symmetric real-valued functions in $L^1([0,1]^2)$, symmetric in the arguments. Given two functions $W_1,W_2\in \mathcal{W}$, define the strong cut distance between $W_1, W_2$ by setting
	$$d_\square(W_1,W_2):=\sup_{S,T}\Big|\int_{S\times T} \Big[W_1(x,y)-W_2(x,y)\Big]dx dy\Big|.$$
	In the above display, the supremum is taken over all measurable subsets $S,T$ of $[0,1]$.  Define the weak cut distance by $$\delta_\square(W_1,W_2):= \inf_{\sigma} d_\square(W^\sigma_1,W_2)= \inf_{\sigma} d_\square (W_1,W^\sigma_2)$$ where $\sigma$ ranges from all measure preserving bijections $[0,1]\rightarrow [0,1]$ and $W^\sigma(x,y)=W(\sigma(x),\sigma(y))$.
	\\
	
	\noindent Given a symmetric matrix $Q_n$, define a function $W_{Q_n}\in \mathcal{W}$ by setting
	\begin{align*}
		W_{Q_n}(x,y)=& Q_n(i,j)\text{ if }\lceil nx\rceil =i, \lceil ny\rceil = j.
	\end{align*}
\end{defn}
{\color{black}For more details on cut metric and its manifold applications, we refer the interested reader to \cite{Lovasz2012}.} We now state two assumptions that we will use in many of our results.
\begin{assm}
	The sequence of matrices $\{Q_n\}_{n\ge 1}$ satisfy 
	\begin{align}\label{eq:cut_con}
		\delta_{\square}(W_{Q_n},W) \rightarrow 0,
	\end{align} for some $W\in \mathcal{W}$.
\end{assm}
\begin{assm}
	There exists a function $\psi:\mathcal{X}\mapsto [0,\infty)$ such that 
	\begin{align}\label{eq:tailp}
|\phi(x_1,\ldots,x_v)|\le \prod_{a=1}^v \psi(x_a),	\text{ and }	\E_{\mu} e^{\lambda \psi(X_1)^p}<\infty.
	\end{align}
	for all $\lambda>0$ and some $p\in [1,\infty]$ (where the case $p=\infty$ corresponds to assuming $\psi(\cdot)$ is uniformly bounded). 
\end{assm}
In particular if $\phi(x_1,\ldots,x_v)=\prod_{a=1}^v x_a$, then we can take $\psi(x)=|x|$, and if $\phi(x_1,\ldots,x_v)=1\{x_1=\ldots=x_v\}$, then we can take $\psi(.)\equiv 1$.
\\

To state our first main result, 
we require the following notations/definitions.

\begin{itemize}
	\item Let $\mathcal{M}$ denote the set of probability measures on $\Sigma:=[0,1]\times \mathcal{X}$, equipped with weak topology. Let $\widetilde{\mathcal{M}}$ denote the subset of measures in $\mathcal{M}$ where the first marginal is $\mathrm{U}[0,1]$. Note that $\widetilde{\mathcal{M}}$ is a closed subset of $\mathcal{M}$. Set 
	\begin{equation}\label{eq:mu0}\rho:=\mathrm{U}[0,1]\otimes \mu,\end{equation}
	and note that $\rho \in \widetilde{\mathcal{M}}$. {\color{black}Here $\mathrm{U}[0,1]$ denotes the Uniform distribution on $[0,1]$.}
	\item Let $D(\cdot|\cdot)$ be the Kullback-Leibler divergence between two probability measures on the same space. {\color{black} More precisely, for two measures $\kappa_1,\kappa_2$ on the same space such that $\kappa_1$ is absolutely continuous with respect to $\kappa_2$ with Radon-Nikodym derivative $\zeta$, set $D(\kappa_1|\kappa_2)= \E_{\kappa_1}(\log \zeta)$. If $\kappa_1$ is not absolutely continuous  with respect to $\kappa_2$, set  $D(\kappa_1|\kappa_2)= \infty$. }
	\item Given a topological space $\Gamma$, denote by $C_b(\Gamma)$ the space of all bounded, continuous functions on $\Gamma$.
	\item Given a probability measure $\nu\in\widetilde{\mathcal{M}}$, let $\nu^{(2)}$ denote its second marginal, i.e.,
	\begin{equation}\label{eq:secmar}
		\ntw(A)=\nu([0,1]\times A),
	\end{equation}
	for Borel subsets $A$ of $\mathcal{X}$. In particular, $\rho^{(2)}=\mu$.
\end{itemize}

{\color{black} Before stating the next definition, we recall that $H$ is a fixed graph with $v$ vertices and maximum degree $\Delta$.}

\begin{defn}
	For $W\in \mathcal{W}$, $\phi:\mathcal{X}^v\mapsto \mathbb{R}$, define a functional $T_{W,\phi}$ on $\widetilde{\mathcal{M}}$ by setting
	\begin{equation}\label{eq:definet}
		T_{W,\phi}(\nu):= \E\Bigg[\phi(B_1,\ldots,B_{v})\prod_{(a,b)\in E(H)}W(A_a,A_b)\Bigg],
	\end{equation}
	where the expectation is over $\{(A_a,B_a)\}_{1\le a\le v}\stackrel{i.i.d.}{\sim} \nu$. We note that since $\phi$ is possibly unbounded, this expectation may not exist in general for all $\nu$.
\end{defn}


\begin{thm}\label{thm:main}
	Suppose that  \eqref{eq:cut_con} and \eqref{eq:tailp} hold for some $p\in [1,\infty]$. Further, suppose there exists $q \in (1,\infty]$ such that 
	\begin{align}\label{eq:q}
		\limsup_{n\to\infty} \|W_{Q_n}\|_{q \Delta} < \infty,
	\end{align}
	and $\frac{1}{p}+\frac{1}{q}\le 1$. Then $U_n({\bf X})$ and $V_n({\bf X})$ (as in  \eqref{eq:U} and \eqref{eq:v} respectively) satisfy a LDP with the good rate function
	\begin{equation}\label{eq:i0}
		I_0(t)=\inf_{\nu\in \widetilde{\mathcal{M}}:\ D(\nu|\rho)<\infty,\ T_{W,\phi}(\nu)=t} D(\nu|\rho).
	\end{equation}	
\end{thm}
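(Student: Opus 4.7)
My plan is to deduce the LDP via a Sanov-type LDP for a joint empirical measure plus a contraction/exponential-equivalence argument, after first checking that $U_n$ and $V_n$ are exponentially equivalent. First I would establish that the joint empirical measure
\[ L_n := \frac{1}{n}\sum_{i=1}^n \delta_{(i/n,\, X_i)} \in \widetilde{\mathcal{M}} \]
satisfies an LDP on $\widetilde{\mathcal{M}}$ (weak topology) at speed $n$ with good rate function $\nu \mapsto D(\nu|\rho)$. The summands are independent with laws $\delta_{i/n}\otimes \mu$, and the G\"artner--Ellis theorem applied to the log-MGF $n^{-1}\log \E e^{n L_n(f)} \to \int_0^1 \log \int e^{f(u,y)}\, d\mu(y)\, du$ for $f \in C_b(\Sigma)$ identifies its Fenchel--Legendre transform as $D(\cdot|\rho)$ on $\widetilde{\mathcal{M}}$ (and $+\infty$ outside). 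By construction of $W_{Q_n}$ (piecewise constant on $((i-1)/n,i/n]\times((j-1)/n,j/n]$ with value $Q_n(i,j)$), the exact identity $V_n({\bf X}) = T_{W_{Q_n},\phi}(L_n)$ holds. The difference $U_n - V_n$ only sums tuples with at least one repeated index; repetitions along an edge of $H$ kill the product (since $Q_n$ vanishes on the diagonal), so only $O(n^{v-1})$ tuples contribute, and with $|\phi|\le \prod_a \psi(X_{i_a})$, \eqref{eq:tailp}, \eqref{eq:q}, and H\"older with exponents $p$ and $q\Delta$, one checks $U_n$ and $V_n$ are exponentially equivalent at speed $n$.

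The key analytic step is to show $T_{W_{Q_n},\phi}(L_n)\to T_{W,\phi}(\nu)$ whenever $L_n\to\nu$ weakly and $D(\nu|\rho)<\infty$. By \eqref{eq:cut_con}, there exist measure-preserving bijections $\sigma_n:[0,1]\to[0,1]$ with $d_\square(W_{Q_n}^{\sigma_n},W)\to 0$; since both $T$ and $D(\cdot|\rho)$ are invariant under the joint relabeling $(W_{Q_n}, L_n) \mapsto (W_{Q_n}^{\sigma_n}, (\sigma_n^{-1}\otimes \mathrm{id})_{\ast} L_n)$, I may assume $d_\square(W_{Q_n},W) \to 0$ outright. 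For bounded $\phi$ and bounded kernels, the convergence then follows from the standard counting lemma in graphon theory. To reduce the general case to this one, I would truncate $\phi$ by $\phi_M := \phi\cdot\mathbf{1}\{\max_a \psi(x_a)\le M\}$ and $W_{Q_n}$ by $W_{Q_n}\cdot \mathbf{1}\{|W_{Q_n}|\le M\}$. The truncation error is controlled by H\"older in $L^p$ (for $\psi$, via \eqref{eq:tailp}) and $L^{q\Delta}$ (for $W_{Q_n}$, via \eqref{eq:q}); the constraint $1/p+1/q\le 1$ is exactly what makes the aggregated product of $v$ vertex-factors and up to $v\Delta/2$ edge-factors integrable, and the exponential integrability of $\psi$ promotes this deterministic approximation to exponential equivalence at speed $n$.

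Putting the pieces together, $V_n$ is an exponentially good approximation of $T_{W,\phi}(L_n)$, and $T_{W,\phi}$ is continuous on sub-level sets of $D(\cdot|\rho)$. The contraction principle together with exponential equivalence (Theorems~4.2.1 and 4.2.13 of Dembo--Zeitouni) then yields the LDP for $V_n$ (and hence $U_n$) with the good rate function $I_0$ in \eqref{eq:i0}, the good property being inherited from compactness of sub-level sets of $D(\cdot|\rho)$. The hard part is the middle paragraph: reconciling cut-metric convergence of $W_{Q_n}$ (only defined up to measure-preserving bijections of $[0,1]$) with weak convergence of $L_n$ (whose first marginal is pinned to a fixed dyadic grid), while the unboundedness of $\phi$ and only $L^{q\Delta}$-control on $W_{Q_n}$ force a carefully balanced simultaneous truncation of both kernels before the counting lemma becomes applicable.
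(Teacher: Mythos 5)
Your overall architecture matches the paper's: a Sanov-type LDP for the joint empirical measure $L_n=\frac1n\sum_i\delta_{(i/n,X_i)}$ (the paper's \cref{lmm:bivarldp}, proved via the same MGF calculation and \cite[Cor~4.6.14]{DZ}), the identity $V_n({\bf X})=T_{W_{Q_n},\phi}(\widetilde{\mathfrak L}_n)$, reduction from weak to strong cut convergence via relabeling and exchangeability, truncation of $\phi$, a counting-lemma step, and a contraction argument. Two points, however, deserve attention.

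\textbf{A genuine gap: continuity of the truncated functional.} After truncating $\phi$ to $\phi_M$, you treat $\nu\mapsto T_{W,\phi_M}(\nu)$ as if it were amenable to the contraction principle. But $\phi_M=\phi\,\mathbf 1\{\max_a\psi\le M\}$ is only measurable, not continuous, so $T_{W,\phi_M}$ is \emph{not} weakly continuous on $\widetilde{\mathcal M}$ (indicator discontinuities persist under the integral). The counting lemma gives $T_{W_{Q_n},\phi_M}(\widetilde{\mathfrak L}_n)\approx T_{W,\phi_M}(\widetilde{\mathfrak L}_n)$, but that does not by itself yield $T_{W,\phi_M}(\widetilde{\mathfrak L}_n)\to T_{W,\phi_M}(\nu)$ along weakly convergent subsequences; these are two distinct steps, and the second one needs continuity. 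The paper closes this with a further layer: approximating $\phi_M$ by continuous $\phi^{(\ell)}_M$ in $L^p(\mu^{\otimes v})$, showing (a) $T_{W,\phi^{(\ell)}_M}(\cdot)$ is weakly continuous (\cref{lem:Tcont}(i)), (b) the replacement $\phi_M\leadsto\phi^{(\ell)}_M$ is exponentially negligible at speed $n$ (equation \eqref{eq:phi_equivalence2}, proved via a sub-Gaussian norm bound and \cite[Lemma~3.1]{eichelsbacher1995large}), and (c) the approximation is uniform on sub-level sets $\mathcal A_K$ (\cref{lem:Tcont}(ii)), which is the hypothesis of the approximate contraction principle. Your sketch is missing this step entirely, and it is not automatic.

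\textbf{Related citation issue.} Since $T_{W,\phi}$ (and even $T_{W,\phi_M}$) is not continuous on all of $\widetilde{\mathcal M}$, the plain contraction principle \cite[Thm~4.2.1]{DZ} does not apply; you need the \emph{approximate} contraction principle \cite[Thm~4.2.23]{DZ}, which requires both exponentially good approximation by $T_{W,\phi^{(\ell)}_M}(\widetilde{\mathfrak L}_n)$ and uniform approximation on sub-level sets (condition \cite[Eq.~(4.2.24)]{DZ}). This is what the paper invokes.

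\textbf{Where you take a legitimately different route.} For the counting step, you propose truncating $W_{Q_n}$ to $W_{Q_n}\mathbf 1\{|W_{Q_n}|\le M\}$ and applying the bounded counting lemma, controlling the error via $L^{q\Delta}$-boundedness and a telescoping H\"older argument. The paper instead applies the $L^q$ graphon counting lemma \cite[Prop.~2.19, Thm~2.20]{bc_lpi} directly to the unbounded kernels (its \cref{lem:gen_holder}), reserving the $Q_n$-truncation only for showing $U_n\approx V_n$ (its \cref{lmm:un_vn_same}). Your truncation route can be made to work—uniform $L^{q\Delta}$-boundedness gives uniform integrability, hence $\|W_{Q_n}-W_{Q_n,M}\|_1\to0$ uniformly in $n$, and cut-metric is dominated by $L^1$ so $d_\square(W_{Q_n,M},W_M)\to0$—but be aware that each replacement of one unbounded edge factor requires a separate H\"older split against the remaining $|E(H)|-1$ unbounded factors, so the bookkeeping is nontrivial and not shorter than simply invoking the $L^q$ counting lemma as the paper does. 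Finally, your claim that repeated indices kill the product is true only for repetitions along edges of $H$; for repetitions on non-edges the product survives, and the full argument (paper's \cref{lmm:un_vn_same}) still needs the $Q_n$-truncation plus the $O(n^{v-1})$ count to control those terms.
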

\noindent In~\cref{lem:basicres} part (ii) (in~\cref{sec:main}) we show that the function $T_{W,\phi}(.)$ is well-defined and finite-valued on the set $\{D(\cdot|\rho)<\infty\}$. Consequently, the infimum in~\eqref{eq:i0} is well-defined.

\begin{remark}\label{rem:implic}
Under assumptions~\eqref{eq:cut_con}~and~\eqref{eq:q}, ~\cite[Theorem 2.13]{borgs2018p} gives
\begin{equation}\label{eq:W_q}
    \|W\|_{q \Delta} < \infty,
\end{equation} 
for any $q>1$. This is something that we will use in the paper.
\end{remark}


\begin{remark}\label{rem:cramer} The case of homogeneous U-statistics corresponds to the choice $Q_n(i,j)=1_{i\ne j}$, for which \eqref{eq:q} holds with $q=\infty$. In this case~\eqref{eq:tailp} reduces to the assumption \begin{align}\label{eq:tailpp}
\E_\mu e^{\lambda \psi(X_1)}<\infty,
\end{align}
{\color{black} for all $\lambda>0$} which corresponds to the choice $p=1$. Assumptions similar to~\eqref{eq:tailpp} have been used in the literature for studying LDP for sample means (\cite[Thm 2.2.30]{DZ}), and (homogeneous) U-statistics (\cite[Eqn 1.4]{eichelsbacher2002large}), where it is referred to as the strong Cram\'{e}r condition.~\cref{thm:main} generalizes this to allow for inhomogeneity in the U-statistics, and allow for $q>1$ in~\eqref{eq:q}, as long as the corresponding $p$ in \eqref{eq:tailp} satisfies the natural Holder-type condition $\frac{1}{p}+\frac{1}{q}\le1$. 
\end{remark}

\noindent  For some special choices of $H$, the condition \eqref{eq:q} can be relaxed to allow for $q=1$, as shown in our second result. {\color{black} As explained in \cref{rem:sparse} below, this allows us to apply \cref{cor:main} to Erd\H{o}s-R\'{e}nyi graphs with $p_n \rightarrow 0$, whereas \cref{thm:main} is restricted to the case $p_n=p$ fixed (while allowing for an arbitrary subgraph $H$).}


\begin{thm}\label{cor:main}
	Suppose $\mathcal{X}$ is a compact metric space, and $\phi(\cdot)$ is continuous. Suppose further that \eqref{eq:cut_con} holds.  Then $U_n({\bf X})$ and $V_n({\bf X})$ (defined in \eqref{eq:U} and~\eqref{eq:v} respectively) satisfy a LDP with the same good rate function $I_0$ defined in \eqref{eq:i0}, if, setting
$$ \mar_{\tw}(x):=\int_{[0,1]} |\tw(x,y)|\,dy\text{ for }\tw\in \mathcal{W},$$ any of the following conditions hold {\color{black} with $U\sim\mathrm{U}[0,1]$}:
	\begin{itemize}
	    \item[(i)] If $H=K_2$ (edge) and 
	\begin{align}\label{eq:pp}
		\limsup_{n\to\infty} \E\left[\mar_{W_{Q_n}}(U)\right] < \infty,\quad  \E\left[\mar_W(U)\right] < \infty.
	\end{align}
	\item[(ii)] If $H=K_{1,v-1}$ (the $(v-1)$-star graph) and \begin{equation}\label{eq:pp1}\{\mar^{v-1}_{W_{Q_n}}(U)\}_{n\geq 1}\ \mbox{is uniformly integrable},\quad \E\left[\mar^{v-1}_{W}(U)\right]<\infty.\end{equation}
	\item[(iii)] If $H$ is a tree graph and 	
\begin{equation}\label{eq:pp2}\limsup_{n\to\infty}\lVert\mar_{W_{Q_n}}\rVert_{\infty}<\infty, \quad \lVert \mar_W\rVert_{\infty}<\infty.\end{equation}
		\end{itemize}
\end{thm}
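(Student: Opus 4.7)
The plan is to reduce the LDP for $U_n$ and $V_n$ to the one already proved in~\cref{thm:main} by truncating the matrix $Q_n$ and appealing to exponential approximation. Since $\mathcal{X}$ is compact and $\phi$ is continuous, $\phi$ is bounded, so \eqref{eq:tailp} holds trivially with $p=\infty$, and therefore~\cref{thm:main} applies whenever $\|W_{Q_n}\|_{q\Delta}$ is uniformly bounded for \emph{any} $q>1$. The challenge relative to~\cref{thm:main} is that \eqref{eq:q} is not assumed; the three graph types in (i)--(iii) are precisely those for which the truncation error remains controllable with only the weaker hypotheses \eqref{eq:pp}--\eqref{eq:pp2}.

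The first step is to fix $L>0$, set $Q_n^{(L)}(i,j):=Q_n(i,j)\mathbf{1}\{|Q_n(i,j)|\le L\}$ and $W_L:=W\cdot\mathbf{1}\{|W|\le L\}$, and verify that $\delta_\square(W_{Q_n^{(L)}},W_L)\to 0$. This can be done by combining~\eqref{eq:cut_con} with standard cut-metric continuity properties of bounded truncation, using that truncation is Lipschitz in $d_\square$ on $L^\infty$-bounded sets. Because $\|W_{Q_n^{(L)}}\|_\infty\le L$, the hypothesis \eqref{eq:q} holds with any $q\in(1,\infty]$, so~\cref{thm:main} gives a LDP for the truncated statistics $U_n^{(L)}$ and $V_n^{(L)}$ (built from $Q_n^{(L)}$) with good rate function $I_0^{(L)}$ obtained from~\eqref{eq:i0} by replacing $W$ with $W_L$.

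The second step is the \emph{exponential approximation} bound. Writing $M:=\|\phi\|_\infty$ and telescoping the product over $E(H)$, one has a deterministic bound
\begin{equation*}
|U_n-U_n^{(L)}|\le M\sum_{(a,b)\in E(H)}\frac{1}{n^v}\sum_{(i_1,\ldots,i_v)\in\mathcal{S}(n,v)}|Q_n(i_a,i_b)|\,\mathbf{1}\{|Q_n(i_a,i_b)|>L\}\prod_{(c,d)\ne(a,b)}|Q_n(i_c,i_d)|,
\end{equation*}
and the same bound holds for $|V_n-V_n^{(L)}|$. In case (i) with $H=K_2$ this reduces to $M\int_{|W_{Q_n}|>L}|W_{Q_n}|$, which is shown to vanish as $L\to\infty$ uniformly in $n$ using \eqref{eq:pp} together with the cut-metric convergence $W_{Q_n}\to W$ and $W\in L^1$. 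In case (ii), isolating the center vertex of $K_{1,v-1}$ and summing out the leaves converts the bound into an average of $\mar_{W_{Q_n}}^{v-1}$ restricted to where any of the leaf edges have magnitude $>L$, which vanishes by the uniform integrability of $\{\mar_{W_{Q_n}}^{v-1}(U)\}$ from \eqref{eq:pp1}. In case (iii), one peels off leaves of the tree one at a time, picking up a factor of $\|\mar_{W_{Q_n}}\|_\infty$ at each step, thereby reducing to the edge case with effective constant $\|\mar_{W_{Q_n}}\|_\infty^{|E(H)|-1}$ controlled by \eqref{eq:pp2}. In all three cases the bound is deterministic, so $U_n^{(L)}$ is an exponentially good approximation to $U_n$.

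The final step is to verify that the rate functions converge, $I_0^{(L)}(t)\to I_0(t)$, in the sense needed to apply the exponential approximation machinery (e.g.~\cite[Thm.~4.2.16]{DZ}): this amounts to showing that $T_{W_L,\phi}(\nu)\to T_{W,\phi}(\nu)$ for every $\nu\in\widetilde{\mathcal{M}}$ with $D(\nu|\rho)<\infty$ (so that feasible points of the variational problem are preserved), together with lower-semicontinuity of the limit; for each of (i)--(iii) the dominated convergence argument uses the appropriate moment bound on $W$ guaranteed by the second half of \eqref{eq:pp}, \eqref{eq:pp1}, or \eqref{eq:pp2}. The main obstacle throughout is the exponential approximation in Step 2: verifying that the truncation error vanishes uniformly in $n$, in particular deducing the needed uniform integrability-type statements from just $L^1$-type bounds on $\mar_{W_{Q_n}}$ plus cut-metric convergence to a limit with the stated integrability, is the delicate point and is where the graph-structural restrictions (i)--(iii) are essential.
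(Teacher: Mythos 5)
Your proposal proceeds by truncating $Q_n$ and letting the truncation level go to infinity after $n\to\infty$; the paper instead truncates nothing on the matrix side and works on the function side, using Stone--Weierstrass to approximate $\phi$ by step functions on $\mathcal{X}^v$ and then controlling $T_{W_{Q_n},\tilde\phi}-T_{W,\tilde\phi}$ directly via cut-metric (this is Lemma~\ref{lem:quadratic}, whose conclusion $\sup_{\nu\in\widetilde{\mathcal{M}}}|T_{W_{Q_n},\phi}(\nu)-T_{W,\phi}(\nu)|\to 0$ makes the approximation deterministic and uniform, so that the contraction principle applies immediately). Your route has two substantive gaps.

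First, the claim in Step~1 that $\delta_\square(W_{Q_n^{(L)}},W_L)\to 0$ follows from \eqref{eq:cut_con} is false: pointwise truncation is not continuous in cut metric, even from sequences converging to an $L^\infty$ limit. For example, take $W\equiv 2$, $L=1$, and let $W_{Q_n}(x,y)=2+r_n(x)r_n(y)$ with $r_n$ a rapidly oscillating $\pm1$ Rademacher-type function at scale $1/n$. Then $d_\square(W_{Q_n},W)\to 0$, but $W_{Q_n}^{(1)}=\mathbf{1}\{r_n(x)r_n(y)=-1\}$ converges in cut metric to the constant $\tfrac12$, whereas $W_1=W\mathbf{1}\{|W|\le 1\}\equiv 0$. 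So $W_{Q_n}^{(L)}$ need not converge to $W_L$, and Theorem~\ref{thm:main} cannot be invoked for the truncated statistics with the rate function $I_0^{(L)}$ you write down. (One could at best extract subsequential limits $\tilde W_L$ of $W_{Q_n}^{(L)}$ by compactness of bounded graphons, but these limits are not $W_L$, and the subsequent comparison of rate functions would be much more delicate than your Step~3 acknowledges.)

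Second, even granting the LDP for the truncated statistics, the exponential approximation bound in Step~2 does not close under the stated hypotheses. In case~(i), the truncation error is $M\int_{|W_{Q_n}|>L}|W_{Q_n}|$, and for this to vanish as $L\to\infty$ \emph{uniformly in $n$} you need uniform integrability of $\{W_{Q_n}\}_n$ in $L^1([0,1]^2)$; but \eqref{eq:pp} only supplies a uniform $L^1$ bound, and cut-metric convergence does not upgrade that to uniform integrability (note that the theorem's case~(ii) \emph{does} explicitly assume uniform integrability of $\mar_{W_{Q_n}}^{v-1}$, precisely because an $L^1$ bound alone would not suffice there; case~(i) deliberately assumes less). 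Similarly in case~(ii), uniform integrability of $\mar_{W_{Q_n}}^{v-1}(U)$ controls the tails of the \emph{row averages}, not the entrywise tails $\int_y|W_{Q_n}(x,y)|\mathbf{1}\{|W_{Q_n}(x,y)|>L\}\,dy$ appearing in your leaf-summed bound: a row with a single tall, thin spike has small $\mar_{W_{Q_n}}(x)$ but a truncation error that does not vanish as $L\to\infty$. So the exponential equivalence you need is not established. The paper sidesteps both problems because it never truncates $Q_n$: the $L^1$-type bounds on $\mar_{W_{Q_n}}$ are used only to control the error incurred by replacing $\phi$ with a step-function approximant, and the rest is pure cut-metric convergence applied to bilinear/multilinear forms in indicator functions.
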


\begin{remark}
In particular if $W_{Q_n}(.,.)\ge 0$, then we do not need to assume the second assumption on the limiting $W$ in each of the above displays \eqref{eq:pp}, \eqref{eq:pp1} and \eqref{eq:pp2}, as it follows from the corresponding assumption on $W_{Q_n}$. This is because cut metric convergence implies $\mar_{W_{Q_n}}(U)\stackrel{d}{\to}\mar_{W}(U)$. {\color{black} To see this, first replace $\delta_{\square}$ with $d_{\square}$ without loss of generality (by the same argument as used at the beginning of the proof of \cref{thm:main}). Consequently, $d_{\square}(W_{Q_n},W)\to 0$ and $W_{Q_n}(\cdot,\cdot)\ge 0$ implies  $$ \sup_{S\subseteq [0,1]} \bigg|\int_S (\mathfrak{R}_{W_{Q_n}}(u)-\mathfrak{R}_{W}(u))\,du\bigg|\to 0,$$ which in turn implies $\mar_{W_{Q_n}}(U)\stackrel{d}{\to}\mar_{W}(U)$.} It is unclear whether the above convergence holds without the non-negativity assumption.
\end{remark}
%
%
\noindent Note that in~\cref{cor:main} we did not need to assume \eqref{eq:tailp} separately, since $\mathcal{X}$ is compact and $\phi$ is continuous, implying boundedness of $\phi$. Part (i) of~\cref{cor:main} generalizes a result of \cite{BorgsLPII}, which specializes on the case $\mathcal{X}=[q]$ is a finite set (c.f.~\cite[Lem 3.2]{BorgsLPII} part (i) and \cite[Theorem 2.10]{BorgsLPII} part (v)). Our proofs are more direct {\color{black}(uses classical tools of LDP)}, and allow for a general choice of $H$ (compared to the case $H=K_2$ studied in \cite{BorgsLPII}, see parts (ii) and (iii) of~\cref{cor:main}, albeit under slightly stronger assumptions when $H\ne K_2$.

\noindent Given~\cref{thm:main} and~\cref{cor:main}, we will focus on $U_n({\bf X})$ throughout the remainder of the paper, noting that all our subsequent results apply to $V_n({\bf X})$ as well. {\color{black}Even though~\cref{thm:main} and~\cref{cor:main} both apply to general symmetric matrices, an important class of examples is when $Q_n$ is the scaled adjacency matrix of a simple labelled graph $G_n$, defined by $$Q_n:=\frac{1}{\|G_n\|_1}G_n, \text{ where }\|G_n\|_1:=\frac{1}{n^2}\sum_{i,j=1}^nG_n(i,j).$$
Here $G_n$ could be a convergent sequence of both deterministic and random graphs. For LDP of $U_n({\bf X})$ with a random $Q_n/G_n$, we will use the following definition.
\begin{defn}\label{def:ldprandomgraph}
    Let $Q_n$ be a random matrix.  
    We say that $U_n({\bf X})$ (defined in \eqref{eq:U}) satisfies a conditional LDP with speed $n$ and a \emph{deterministic} good rate function $I(\cdot)$, if the level sets of $I(\cdot)$ are compact, and for any Borel set $F$ and any $\vep>0$, we have: 
    $$\P_{Q_n}\left(-\inf_{x\in F^{\circ}} I(x)-\vep\le \frac{1}{n}\log\P(U_n({\bf X})\in F|Q_n)\le -\inf_{x\in \bar{F}} I(x)+\vep\right)\to 1.$$
\end{defn}
}

\begin{remark}\label{rem:sparse}
 \cref{thm:main} applies if $G_n$ is a convergent sequence of dense graphs (i.e.~$ \liminf_{n\to\infty}\|G_n\|_1>0)$. This is because \eqref{eq:q} holds with $q=\infty$, and so condition \eqref{eq:tailp} reduces to \eqref{eq:tailpp}. 
 {\color{black} In particular, \cref{thm:main} applies when $G_n$ is an Erd\H{o}s-R\'{e}nyi graph with parameter $p_n=p$ fixed, for an \textit{arbitrary} subgraph $H$ (including $H=K_3$) in the sense of \cref{def:ldprandomgraph}.} For a (non-graph) example where \eqref{eq:q} holds for some $q\in (1,\infty)$, we refer the reader to~\cref{rem:ex} below. On the other hand,~\cref{cor:main} allows for any sequence of graphs converging in cut metric (not necessarily dense), as in this case $\|W_{Q_n}\|_1=1$ by design. {\color{black}In particular,~\cref{cor:main} applies (in the sense of \cref{def:ldprandomgraph}) to an Erd\H{o}s-R\'{e}nyi graph $G_n$ on $n$ vertices with parameter $p_n$ as soon as $np_n\to \infty$ if $H=K_{1,v-1}$ is a star graph (including $H=K_2$), and whenever $np_n\gg \log{n}$ if $H$ is a non-star tree. This is because, with $(d_1(G_n),\cdots,d_n(G_n))$ denoting the labelled degree sequence, the above regimes of the parameter $p_n$ along with standard bounds using the Binomial distribution gives
 $$\frac{\sum_{i=1}^nd_i(G_n)}{n^2p_n}=O_P(1),\quad \frac{1}{n}\sum_{i=1}^n\frac{d_i^v(G_n)}{(np_n)^v}=O_P(1),\quad \max_{i\in [n]}\frac{d_i(G_n)}{np_n}=O_P(1). $$}
\end{remark}

 \begin{ex}\label{rem:ex}
 Fix $\alpha\in (0,1/\Delta)$, and let $$Q_n(i,j)= B_{ij}\left(\frac{ij}{n^2}\right)^{-\alpha},\quad W(x,y)= \frac12 (xy)^{-\alpha},\text{  where }B_{ij} \overset{\mathrm{iid}}{\sim} Bern\left(\frac12\right).$$
Then we have $$d_{\square}(W_{Q_n},W)=o_P(1), \text{ and }
\limsup_{n\to\infty} \|W_{Q_n}\|_{q\Delta} < \infty$$
for some $q>1$ (so the condition \eqref{eq:q} in~\cref{thm:main} does hold). However,  $$\|W_{Q_n}\|_\infty\stackrel{P}{\to} \infty,\text{ and }\|W_{Q_n}-W\|_1\stackrel{P}{\to}\frac{1}{2(1-\alpha)^2}>0.$$ Thus~\eqref{eq:q} does not hold with $q=\infty$, and one cannot replace convergence in cut metric in~\eqref{eq:q} by the stronger assumption of $L_1$ convergence. We defer the proof to~\cref{sec:auxlem}.\\
 \end{ex}

 \noindent As a first application of Theorem \ref{thm:main}, we study Gibbs distributions with inhomogeneous U-statistics as the Hamiltonian. Such models have been widely studied both in statistics and probability; see~\cref{sec:multlin} for relevant references. Before stating the result, we require the following definitions.
\begin{defn}\label{def:Gibbs}
	Fixing $\theta\in \R$, define a function $$Z_n(\theta):=\frac{1}{n}\log \E_{\mu^{\otimes n}} e^{n\theta U_n({\bf X})}\in (-\infty,\infty].$$ If $\theta$ is such that $Z_n(\theta)$ is finite, define a probability distribution $\R_{n,\theta}$ on $\mathcal{X}$ by setting
	\begin{align}\label{eq:gibbs}
		\frac{d\R_{n,\theta}}{d\mu^{\otimes n}}({\bf x})=\exp\Big(n\theta U_n({\bf x})-nZ_n(\theta)\Big).
	\end{align}
\end{defn}
\begin{defn}\label{def:lip}
	For two measures $\nu_1,\nu_2$ on $\Sigma$, define
	\[
	d_l(\nu_1,\nu_2):= \sup_{f \in \text{Lip}(1)}\Bigg|\int f d\nu_1-\int f d\nu_2\Bigg|,
	\]
	where the supremum is over the set of functions $f:\Sigma\mapsto [-1,1]$ which are $1$-Lipschitz. {\color{black} Equivalently, $d_l$ is the bounded Lipschitz metric on $\mathcal{M}$, the set of probability measures on $\Sigma$.}
\end{defn}

\noindent Armed with this, the following corollary of \cref{thm:main} computes the asymptotics of the log normalizing constant, and derives a weak law under $\R_{n,\theta}$ (defined in \eqref{eq:gibbs}) for the bivariate empirical measure $\mathfrak{L}_n$, defined by
\begin{equation}\label{eq:ln}
	\mathfrak{L}_n := \frac{1}{n}\sum_{i=1}^{n}\delta_{(\frac{i}{n},X_i)}\quad 
	\text{ for }{\bf X}=(X_1,\ldots ,X_n)\in \mathcal{X}^n.
\end{equation}
\begin{cor}\label{lem:gibbs}
	Suppose  \eqref{eq:cut_con} and \eqref{eq:q} hold for some $q\in (1,\infty]$, and \eqref{eq:tailp} holds for some $p\ge v$, and $\frac{1}{p}+\frac{1}{q}\leq 1$.
	%
	Then the following conclusions hold:
	\\ 
	(i)  For all  $\theta \in \mathbb{R}$ we have $$\lim_{n\to\infty}Z_n(\theta)=Z(\theta):=\sup_{\nu\in \widetilde{\mathcal{M}}:\ D(\nu|\rho)<\infty}\{\theta T_{W,\phi}(\nu)-D(\nu|\rho)\}<\infty.$$
	
	\noindent (ii) Under $\R_{n,\theta}$, ${\mathfrak{L}}_n$ satisfies a LDP with the good rate function
	\begin{equation}\label{eq:maingrf}
		I_\theta(\nu):=D(\nu|\rho)-\theta T_{W,\phi}(\nu)-\inf_{\sigma\in \widetilde{\mathcal{M}}:\ D(\sigma|\rho)<\infty}\{ D(\sigma|\rho)-\theta T_{W,\phi}(\sigma)\}.
	\end{equation} 
	Consequently, the supremum in part (i) is achieved on a set $F_\theta$, say, and further under $\R_{n,\theta}$, $$d_l(\mathfrak{L}_n,F_\theta)\stackrel{P}{\to}0.$$ 
\end{cor}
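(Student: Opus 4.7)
The plan is to deduce both parts from Theorem \ref{thm:main} combined with a Sanov-type LDP for $\mathfrak{L}_n$ and exponential tilting.

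For part (i), I would apply Varadhan's lemma to the LDP for $U_n(\mathbf{X})$ furnished by Theorem \ref{thm:main}, using the continuous functional $t \mapsto \theta t$. This directly yields
\[
\lim_{n\to\infty} Z_n(\theta) = \sup_{t \in \mathbb{R}}\{\theta t - I_0(t)\} = \sup_{\nu \in \widetilde{\mathcal{M}},\, D(\nu|\rho)<\infty}\{\theta T_{W,\phi}(\nu) - D(\nu|\rho)\},
\]
the second equality following by substituting the variational formula for $I_0$ and exchanging the infimum with the supremum. The Varadhan integrability hypothesis $\limsup_n n^{-1}\log \mathbb{E}_{\mu^{\otimes n}} e^{n\gamma\theta U_n} < \infty$ for some $\gamma>1$ would be verified by bounding $|U_n(\mathbf{X})|$ via $|\phi| \leq \prod_a \psi(X_{i_a})$ together with an AM--GM step $\prod_a \psi(X_{i_a}) \leq v^{-1}\sum_a \psi(X_{i_a})^v$, and then invoking the tail assumption with $p \geq v$ in conjunction with the uniform bound $\|W_{Q_n}\|_{q\Delta} \leq C$, exactly the interplay that the Hölder-type relation $1/p + 1/q \leq 1$ is designed to exploit.

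For part (ii), I would invoke an extended Sanov theorem asserting that $\mathfrak{L}_n$ satisfies an LDP on $(\widetilde{\mathcal{M}}, \text{weak topology})$ under $\mu^{\otimes n}$ with good rate $D(\cdot|\rho)$; this is standard since $\mathfrak{L}_n$ is the empirical measure of the tagged samples $\{(i/n, X_i)\}$ with $X_i \stackrel{\text{i.i.d.}}{\sim}\mu$. The next and crucial step is a superexponential approximation $U_n(\mathbf{X}) \approx T_{W,\phi}(\mathfrak{L}_n)$ under $\mu^{\otimes n}$, which is essentially implicit in the proof of Theorem \ref{thm:main}. Writing
\[
\mathbb{R}_{n,\theta}(\mathfrak{L}_n \in A) = e^{-nZ_n(\theta)}\mathbb{E}_{\mu^{\otimes n}}\Bigl[e^{n\theta U_n(\mathbf{X})}\mathbf{1}\{\mathfrak{L}_n \in A\}\Bigr],
\]
the substitution $U_n \rightsquigarrow T_{W,\phi}(\mathfrak{L}_n)$ converts this into a standard Varadhan-type tilting of the Sanov LDP, producing the claimed LDP for $\mathfrak{L}_n$ under $\mathbb{R}_{n,\theta}$ with rate $I_\theta(\nu) = D(\nu|\rho) - \theta T_{W,\phi}(\nu) + Z(\theta)$, matching \eqref{eq:maingrf}. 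Since $I_\theta$ is a good rate function, $F_\theta = \{I_\theta = 0\}$ is a nonempty compact subset of $\widetilde{\mathcal{M}}$, and because $d_l$ metrizes weak convergence on $\Sigma$, the LDP forces $d_l(\mathfrak{L}_n, F_\theta) \to 0$ in $\mathbb{R}_{n,\theta}$-probability by a routine contradiction argument.

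The principal obstacle is the rigorous execution of the tilting, because $T_{W,\phi}$ is generally not continuous in the weak topology: $\phi$ may be unbounded and $W$ is only in $L^{q\Delta}$. The natural remedy is a truncation scheme, replacing $\phi$ by $\phi^{(M)} := (\phi \wedge M) \vee (-M)$ and $W$ by $W^{(M)}$ defined analogously, carrying out the tilted LDP for the resulting bounded continuous functional $T_{W^{(M)},\phi^{(M)}}$, and then sending $M \to \infty$. The strengthened hypothesis $p \geq v$ in this corollary (as opposed to the weaker $1/p + 1/q \leq 1$ used in Theorem \ref{thm:main}) is precisely what controls the truncation error uniformly over sublevel sets $\{\nu : D(\nu|\rho) \leq R\}$, via a $v$-fold Hölder bound $|T_{W,\phi}(\nu) - T_{W^{(M)},\phi^{(M)}}(\nu)|$ in terms of $\int \psi^v \, d\nu^{(2)}$ and an $L^{q\Delta}$ tail of $W$. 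This same uniform estimate simultaneously furnishes the superexponential approximation $U_n(\mathbf{X}) \approx T_{W,\phi}(\mathfrak{L}_n)$ needed to close the tilting argument.
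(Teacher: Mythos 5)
Your proposal is correct in spirit, and part (i) matches the paper exactly: apply Varadhan's lemma to the LDP from Theorem \ref{thm:main}, verify integrability via a H\"{o}lder bound involving $\|W_{Q_n}\|_{q\Delta}$ and the $p\ge v$ tail condition. (The paper routes the moment bound through Lemma \ref{lem:Tgraphon0}, giving $|U_n|\lesssim(\tfrac1n\sum_i\psi(X_i)^p)^{v/p}$, rather than your AM--GM step, but these are interchangeable.)

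For part (ii) you take a genuinely different route. You propose a direct tilting of the Sanov LDP for $\mathfrak{L}_n$ by the (unbounded, discontinuous) functional $\theta T_{W,\phi}$, patched up by a truncation scheme $\phi\mapsto\phi^{(M)}$, $W\mapsto W^{(M)}$. The paper instead computes the Laplace functional limit $\lim_n \tfrac1n\log\E_{\R_{n,\theta}}e^{n\widetilde T(\widetilde{\mathfrak{L}}_n)}$ for \emph{every} bounded continuous test functional $\widetilde T$ on $\widetilde{\mathcal{M}}$ (by re-running the proof of Theorem \ref{thm:main} for $\theta U_n+\widetilde T(\widetilde{\mathfrak{L}}_n)$ and applying Varadhan as in part (i)), and then invokes the inverse-Varadhan result \cite[Theorem 4.4.13]{DZ} to upgrade Laplace limits to an LDP. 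This bypasses the need to directly tilt by a non-continuous functional; the only continuity that must be managed is already handled inside Theorem \ref{thm:main}. Your approach buys directness, at the cost of carefully carrying the truncation error through the tilted measure; the paper's approach buys a clean reduction to existing machinery. Both are legitimate.

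There is, however, a genuine gap in your proposal: you assert that $I_\theta$ is a good rate function and use this to extract a nonempty compact $F_\theta$ and the convergence $d_l(\mathfrak{L}_n,F_\theta)\to0$ in probability, but you never prove goodness. This is not automatic: $I_\theta=D(\cdot|\rho)-\theta T_{W,\phi}(\cdot)+Z(\theta)$, and since $T_{W,\phi}$ is unbounded above, the sublevel sets $\{I_\theta\le\alpha\}$ could a priori be non-compact. Your remark about $p\ge v$ ``controlling the truncation error uniformly over sublevel sets $\{D(\cdot|\rho)\le R\}$'' is adjacent to the needed fact but does not supply it; what is actually required is the \emph{containment} $\{I_\theta\le\alpha\}\subseteq\{D(\cdot|\rho)\le K\}$ for some finite $K=K(\alpha,\theta)$, which the paper establishes in Lemma \ref{lem:contain} (by contradiction, using convexity of $\theta\mapsto\sup_\nu\{\theta T_{W,\phi}(\nu)-D(\nu|\rho)\}$ and finiteness of $Z(\theta')$ for $\theta'>\theta$). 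Goodness of $I_\theta$ is indispensable in both your truncation route (to control the passage $M\to\infty$ and to invoke the goodness-preserving exponential approximation framework of \cite[Theorem 4.2.23]{DZ}) and in the paper's inverse-Varadhan route (where \cite[Theorem 4.4.13]{DZ} explicitly hypothesizes a good rate). You should add this containment lemma and the lower-semicontinuity argument for $D(\cdot|\rho)-\theta T_{W,\phi}(\cdot)$ on $\mathcal{A}_K$ to close the proof.
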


\begin{remark}
Note that~\cref{lem:gibbs} has the more restrictive condition $p\ge v$, instead of the condition $p\ge 1$ of~\cref{thm:main}. To see why this is necessary (and in fact the \enquote{right} condition), consider the case where $H=K_2$, $Q_n(i,j)=1_{i\ne j}$, $\phi(x,y)=xy$,   $\mu=N(0,1)$. This corresponds to studying the homogeneous U-statistic
$$U_n({\bf X})=\bar{X}^2-\frac{1}{n^2}\sum_{i=1}^nX_i^2\approx \bar{X}^2.$$
Since $\E e^{\lambda |X_1|}<\infty$ for all $\lambda>0$, one can study the LDP of $\bar{X}^2$ (via Cramer's theorem and contraction principle). On the other hand, one cannot define a Gibbs measure with $n\bar{X}^2$ as sufficient statistic, as $\E e^{n\theta \bar{X}^2}=\infty$ for all $\theta>1/2$.  Since $v=2$ for $H=K_2$,~\cref{lem:gibbs} instead demands that \eqref{eq:tailp} holds for some $p\ge 2$. Equivalently, we need the slightly stronger condition $\E e^{\lambda X_1^2}<\infty$ for all $\lambda>0$, which fails for $\mu=N(0,1)$.  

\end{remark}

\noindent The rate function in \eqref{eq:i0} in~\cref{thm:main} is obtained as a constrained optimization problem, whereas the rate function in part (ii) of~\cref{lem:gibbs} is in terms of an unconstrained optimization problem. The following Lemma expresses the optimizers of~\cref{thm:main} in terms of the optimizers obtained in~\cref{lem:gibbs}, which are typically easier to compute.

\begin{lmm}\label{lem:con_not}
Let the assumptions from~\cref{thm:main} hold. Suppose the set $T_{W,\phi}(F_\theta):=\{T_{W,\phi}(\nu),\nu\in F_\theta\}$ has cardinality $1$ for some $\theta$, and $Z(.)$ is differentiable at $\theta$.
\\

(i)  $Z'(\theta)= T_{W,\phi}(F_\theta).$
\\

(ii) Let $t$ be such that $Z'(\theta)=t$ for some $\theta$. Then $$\arginf_{\nu\in\widetilde{\mathcal{M}}:\ D(\nu|\rho)<\infty,\ T_{W,\phi}(\nu)=t} D(\nu|\rho)=F_\theta,$$
where $\rho$ is defined as in~\eqref{eq:mu0}.
\end{lmm}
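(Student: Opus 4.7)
The plan is to reduce both parts to a standard convex-duality calculation built around the variational formula for $Z(\theta)$. The key ingredient is that, by Corollary \ref{lem:gibbs}(i), $Z$ is the supremum of the affine-in-$\theta$ maps $\theta\mapsto \theta T_{W,\phi}(\nu)-D(\nu|\rho)$ taken over $\{\nu\in\widetilde{\mathcal{M}}:D(\nu|\rho)<\infty\}$, so $Z$ is convex on $\mathbb{R}$, and by Corollary \ref{lem:gibbs}(ii) the supremum is attained precisely on the (nonempty) set $F_\theta$.

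For part (i), I would pick any $\nu\in F_\theta$, so that $Z(\theta)=\theta T_{W,\phi}(\nu)-D(\nu|\rho)$, while for every other $s\in\mathbb{R}$ the variational formula gives only the lower bound $Z(s)\geq s T_{W,\phi}(\nu)-D(\nu|\rho)$. Subtracting these yields $Z(s)-Z(\theta)\geq (s-\theta)T_{W,\phi}(\nu)$, so $T_{W,\phi}(\nu)\in\partial Z(\theta)$. Differentiability of $Z$ at $\theta$ forces $\partial Z(\theta)=\{Z'(\theta)\}$, hence $T_{W,\phi}(\nu)=Z'(\theta)$ for every $\nu\in F_\theta$. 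Combined with the hypothesis that $T_{W,\phi}(F_\theta)$ is a singleton, this yields $Z'(\theta)=T_{W,\phi}(F_\theta)$, as claimed.

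For part (ii), fix $\theta$ with $Z'(\theta)=t$. By part (i), $T_{W,\phi}(\nu)=t$ for every $\nu\in F_\theta$, so $F_\theta$ is a subset of the feasible set $\{\nu\in\widetilde{\mathcal{M}}:D(\nu|\rho)<\infty,\,T_{W,\phi}(\nu)=t\}$, and on this subset the identity $D(\nu|\rho)=\theta T_{W,\phi}(\nu)-Z(\theta)=\theta t-Z(\theta)$ holds. For any other feasible $\nu'$ the variational inequality $D(\nu'|\rho)\geq \theta T_{W,\phi}(\nu')-Z(\theta)=\theta t-Z(\theta)$ holds, with equality if and only if $\theta T_{W,\phi}(\nu')-D(\nu'|\rho)=Z(\theta)$, i.e.\ iff $\nu'\in F_\theta$. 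Hence the constrained infimum equals $\theta t-Z(\theta)$ and is achieved exactly on $F_\theta$, proving (ii).

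I do not anticipate a serious obstacle: the argument is purely convex-analytic once the Gibbs variational representation of $Z$ from \cref{lem:gibbs} is in hand. The only minor subtlety worth flagging is that the subgradient computation in (i) shows that differentiability of $Z$ at $\theta$ automatically implies $|T_{W,\phi}(F_\theta)|=1$, so the two hypotheses of the lemma are not independent; both are invoked here only to line up with the statement. Everything else is a one-line manipulation of the Legendre-type relation $D(\nu|\rho)=\theta T_{W,\phi}(\nu)-Z(\theta)$ that characterizes $F_\theta$.
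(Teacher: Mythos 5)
Your proof is correct, and part (ii) is essentially the same variational manipulation the paper uses. Part (i), however, takes a genuinely different and cleaner route. You observe that each $\nu\in F_\theta$ produces a subgradient $T_{W,\phi}(\nu)\in\partial Z(\theta)$ (since $Z$ is the pointwise supremum of the affine maps $s\mapsto s\,T_{W,\phi}(\nu)-D(\nu|\rho)$ and $\nu$ achieves the sup at $\theta$), and differentiability of $Z$ at $\theta$ collapses $\partial Z(\theta)$ to $\{Z'(\theta)\}$, giving the identity at once. The paper instead proceeds probabilistically: it invokes a tilted LDP for $U_n({\bf X})$ under $\R_{n,\theta}$ (via \cite{Hollander}), shows $t_\theta:=T_{W,\phi}(F_\theta)$ is the unique minimizer of the tilted rate function, proves uniform integrability of $U_n({\bf X})$ under $\R_{n,\theta}$ through an exponential tail estimate \eqref{eq:item2}, deduces $Z_n'(\theta)=\E_{\R_{n,\theta}}U_n({\bf X})\to t_\theta$, and finally uses convexity of each $Z_n$ together with differentiability of $Z$ at $\theta$ to get $Z_n'(\theta)\to Z'(\theta)$. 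Your subgradient argument avoids all finite-$n$ estimates and is more elementary; it also makes transparent, as you flag, that differentiability of $Z$ at $\theta$ already forces $T_{W,\phi}(F_\theta)$ to be a singleton, so the lemma's two hypotheses are not independent. The one thing the paper's longer route buys is the tail bound \eqref{eq:item2}, which is re-used later in the proof of Theorem \ref{thm:multilinear_gibbs} part (iii); your argument does not establish that byproduct, but it is not needed for the statement of Lemma \ref{lem:con_not} itself.
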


\noindent We now apply our general results to study two classes of examples of interest.

\subsubsection{\textbf{Multilinear forms}}\label{sec:multlin}

Our first application focuses on multilinear forms, which correspond to the choice $\mathcal{X}=\R$  and $\phi(x_1,\ldots,x_v)=\prod_{a=1}^v x_a$ in \eqref{eq:U}. We will establish an LDP of the following quantity:
$$N_1(H,Q_n,{\bf X})=n^{-v}\sum_{(i_1,\ldots ,i_v)\in \mathcal{S}(n,v)} \left(\prod_{a=1}^v X_{i_a}\right)\left(\prod_{(a,b)\in E(H)}Q_n(i_a,i_b)\right)$$
In particular if $H=K_2, v=2$, then $N_1(H,Q_n,\bf{X})$ is a quadratic form. For stating the result, we require the following definitions.
\begin{defn}\label{def:tilt}
Let $\mu$ be a non-degenerate probability measure on $\R$, such that $$\alpha(\theta):=\log \int_{\mathcal{X}} e^{\theta x}d\mu(x)$$ is finite for all $\theta\in \R$. Define the $\theta$-exponential tilt of $\mu$ by setting $$\frac{ d\mu_{\theta}}{d \mu}(x):= \exp( \theta x - \alpha(\theta)).$$  Then the function $\alpha(.)$ is infinitely differentiable, with $$\alpha'(\theta)=\mathbb{E}_{\mu_\theta}(X),\quad \alpha''(\theta)=\mathrm{Var}_{\mu_\theta}(X)>0.$$  Consequently the function $\alpha'(.)$ is strictly increasing on $\R$, and has an inverse $\beta(.):\mathcal{N}\mapsto \R $, where $\mathcal{N}:=\alpha'(\R)$ is an open interval. Recall that $\bar{F}$ denotes the closure of a set $F$ in $\R$, and extend $\beta(\cdot)$ to a (possibly) infinite-valued function on $\bar{\mathcal{N}}$ by setting
\begin{align*}
	\beta(\sup\{\mathcal{N}\})=&+\infty\text{ if }\sup\{\mathcal{N}\}<\infty,\\
	\beta(\inf\{\mathcal{N}\})=&-\infty\text{ if }\inf\{\mathcal{N}\}>-\infty.
\end{align*}
Finally, define a function $\gamma:\beta(\bar{\mathcal{N}})\mapsto [0,\infty]$ by setting
\begin{eqnarray*}
	\gamma(\theta):=&D(\mu_{\theta}\| \mu)= \theta \alpha'(\theta)-\alpha(\theta)&\text{ if }\theta\in \R=\beta(\mathcal{N}),\\
	\gamma(\infty) :=&D(\delta_{\sup\{\mathcal{N}\}}|\mu)&\text{ if }\sup\{\mathcal{N}\}<\infty,\\
	\gamma(-\infty) :=&D(\delta_{\inf\{\mathcal{N}\}}|\mu)&\text{ if }\inf\{\mathcal{N}\}>-\infty.
\end{eqnarray*}

As an example, if $\mu=\frac{1}{2}(\delta_{-1}+\delta_1)$ is the symmetric Rademacher distribution, then we have 
$$\alpha(\theta)=\log \cosh(\theta),\quad \beta(u)=\tanh^{-1}(u),\quad \mathcal{N}=(-1,1).$$
\end{defn}

\begin{defn}\label{def:g1}
Fix $W\in \mathcal{W}$ such that $\|W\|_{q\Delta}<\infty$ for some $q>1$. Let $\mathcal{L}$ denote the space of all measurable functions $f:[0,1]\mapsto \bar{\mathcal{N}}$ such that $\int_0^1 |f(u)|^p\,du<\infty$, and $p\geq 1$, $q>1$ satisfies $\frac{1}{p}+\frac{1}{q}\leq 1$. Define the functional $G_{1,W}(.):\mathcal{L}\mapsto \R$ by setting
$$G_{1,W}(f):=\int_{[0,1]^v}\left(\prod_{(a,b)\in E(H)} W(x_a,x_b)\right) \left(\prod_{a=1}^v f(x_a)dx_a\right).$$
It follows from~\cref{lem:Tgraphon0} (by choosing $\phi(x_1,\ldots,x_v)=\prod_{a=1}^v f(x_a)$) that $G_{1,W}(f)$ is well-defined and finite for all $f\in\mathcal{L}$.
\end{defn}

\begin{defn}\label{def:xi1}
Define a map $\Xi_1:\mathcal{L}\mapsto \widetilde{\mathcal{M}}$ as follows:

For any $f\in \mathcal{L}$, if $(U,V)\sim \Xi_1(f)$, then $U\sim \mathrm{U}[0,1]$, and given $U=u$, one has
\begin{eqnarray*}
	V\sim &\mu_{\beta(f(u))}\text{ if }f(u)\in \mathcal{N},&\\
	=&\sup\{\alpha'(\R)\}\text{ if }f(u)=\sup\{\mathcal{N}\},&\text{ (this can only happen if }\sup\{\mathcal{N}\}<\infty),\\
	=&\inf\{\alpha'(\R)\}\text{ if }f(u)=\inf\{\mathcal{N}\},& \text{ (this can only happen if }\inf\{\mathcal{N}\}>-\infty).
\end{eqnarray*}
\end{defn}

We are now in a position to state the LDP for multilinear forms:

\begin{cor}\label{thm:multilinear}
Suppose that $(X_1,\cdots,X_n)\stackrel{i.i.d.}{\sim}\mu$ such that 
\begin{equation}\label{eq:pg1}
	\mathbb{E} e^{\lambda |X_1|^p}<\infty
\end{equation}
for some $p\in [1,\infty]$ and all $\lambda>0$,  and let $\{Q_n\}_{n\ge 1}$ be a sequence of matrices such that \eqref{eq:cut_con} and \eqref{eq:q} hold for some $W\in \mathcal{W}$ and $q>1$, $\frac{1}{p}+\frac{1}{q}\leq 1$. Then $G_{1,W}(f)$ is well-defined and finite for all $f\in\mathcal{L}$, and the random variable
$N_1(H,Q_n,{\bf X})$ satisfies a LDP with the good rate function 
\begin{equation}\label{eq:i1}
	I_1(t):=\inf_{f\in\mathcal{L}:\ \int_{[0,1]} \gamma(\beta(f(x)))dx<\infty,\ G_{1,W}(f)=t}  \int_{[0,1]} \gamma(\beta(f(x)))dx.
\end{equation}

\end{cor}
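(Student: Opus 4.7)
The plan is to obtain the LDP by directly invoking \cref{thm:main} with $\phi(x_1,\ldots,x_v)=\prod_{a=1}^v x_a$ and $\psi(x)=|x|$; the tail hypothesis \eqref{eq:tailp} is then precisely \eqref{eq:pg1}, and the remaining assumptions \eqref{eq:cut_con}, \eqref{eq:q}, and $1/p+1/q\leq 1$ are imposed. This yields a LDP for $N_1(H,Q_n,{\bf X})$ with the good rate function $I_0$ of \eqref{eq:i0}. The remaining work is to reformulate this measure-valued variational problem as the function-valued problem $I_1$.

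To do this, I would use disintegration: any $\nu\in\widetilde{\mathcal{M}}$ with $D(\nu|\rho)<\infty$ decomposes as $\nu(du,dx)=du\otimes\nu_u(dx)$ with $\nu_u\ll\mu$ for a.e.\ $u\in[0,1]$, and the chain rule for relative entropy yields $D(\nu|\rho)=\int_0^1 D(\nu_u|\mu)\,du$. Defining $f_\nu(u):=\int x\,\nu_u(dx)$, independence of the $(A_a,B_a)$ in the definition of $T_{W,\phi}$ gives
\begin{equation*}
T_{W,\phi}(\nu)=\int_{[0,1]^v}\Big(\prod_{(a,b)\in E(H)}W(u_a,u_b)\Big)\prod_{a=1}^v f_\nu(u_a)\,du_a=G_{1,W}(f_\nu).
\end{equation*}
Membership $f_\nu\in\mathcal{L}$ can then be verified: $f_\nu(u)\in\mathrm{cl}(\mathcal{N})$ a.e.\ since $\nu_u\ll\mu$, while $\int_0^1|f_\nu(u)|^p\,du<\infty$ follows by Jensen from $\int |x|^p\,d\nu<\infty$, which in turn follows from $D(\nu|\rho)<\infty$ combined with \eqref{eq:pg1} via the Donsker--Varadhan variational bound. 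Finiteness of $T_{W,\phi}(\nu)$ and $G_{1,W}(f_\nu)$ itself follows from \cref{lem:basicres}(ii) together with a Hölder argument using $\|W\|_{q\Delta}<\infty$ (\cref{rem:implic}).

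The final step is a pointwise-in-$u$ inner minimization, which is Cram\'er's theorem applied to $\mu$: for any $m\in\mathrm{cl}(\mathcal{N})$,
\begin{equation*}
\inf\big\{D(\eta|\mu):\eta\text{ a probability measure with }\mathbb{E}_\eta X=m\big\}=\gamma(\beta(m)),
\end{equation*}
with the infimum attained at $\eta=\mu_{\beta(m)}$ when $m\in\mathcal{N}$ and at the appropriate Dirac mass at a boundary atom of $\mu$ when $m\in\partial\mathcal{N}$. Applied slicewise in $u$, this gives $D(\nu|\rho)\geq\int_0^1\gamma(\beta(f_\nu(u)))\,du$, proving $I_0(t)\geq I_1(t)$: every admissible $\nu$ for $I_0(t)$ produces an admissible $f_\nu$ for $I_1(t)$ with no larger cost. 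The reverse inequality $I_1(t)\geq I_0(t)$ follows by construction: for any admissible $f\in\mathcal{L}$ for $I_1(t)$, set $\nu:=\Xi_1(f)$; then $f_\nu=f$, so $T_{W,\phi}(\nu)=G_{1,W}(f)=t$ and $D(\nu|\rho)=\int_0^1\gamma(\beta(f(u)))\,du$ by the definition of exponential tilt.

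The main obstacle is the treatment of the boundary $\partial\mathcal{N}$: there $\beta(f(u))=\pm\infty$ and the tilted measure $\mu_{\beta(f(u))}$ must be interpreted as a Dirac mass at the corresponding endpoint of the support, requiring verification that the extension of $\gamma$ in \cref{def:tilt} still equals the Cram\'er-type inner infimum and that $\Xi_1(f)$ remains an element of $\widetilde{\mathcal{M}}$. A secondary (but routine) point is to check measurability of $u\mapsto\gamma(\beta(f_\nu(u)))$ and to ensure that the various Fubini manipulations are legitimate; both of these rely on the Hölder-type bound using $1/p+1/q\leq 1$ and $\|W\|_{q\Delta}<\infty$ already invoked to make $G_{1,W}$ and $T_{W,\phi}$ well-defined.
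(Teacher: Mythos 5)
Your proposal is correct and follows essentially the same route as the paper: invoke \cref{thm:main} with $\phi(x_1,\ldots,x_v)=\prod_a x_a$, reduce via the conditional-mean map $\nu\mapsto h_\nu(u)=\E_\nu(V\mid U=u)$ and the chain rule $D(\nu|\rho)=\int_0^1 D(\nu_u|\mu)\,du$, then solve the slicewise inner minimization by Cram\'er/exponential-tilt duality (the paper cites \cite[Section 12.3]{Cover2006} for this and uses \cref{lem:basicres} for the integrability of $h_\nu$, matching your Donsker--Varadhan/Jensen step). Your explicit treatment of the reverse inequality via $\Xi_1(f)$ and your flagging of the boundary case $f(u)\in\partial\mathcal{N}$ correspond to the paper's two-case analysis, so there is no substantive gap.
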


The upper tail LDP for $N_1(H,Q_n,G_n)$ was obtained in the special case when $\mu$ is a Bernoulli measure, and $Q_n$ is a dense graph, in \cite[Thm 1.2]{mukherjee2020replica}. ~\cref{thm:multilinear} generalizes this to arbitrary random variables, as well as for more general graphs/matrices.
\par

Utilizing the LDP in~\cref{thm:multilinear}, we study the behavior of Gibbs distributions with multilinear forms as Hamiltonians. Towards this direction, fix $\theta\in\R$ and let $\R_{n,\theta}^{(1)}$ be a probability distribution on $\R^n$ defined by setting
\begin{equation}\label{eq:highordu}\frac{d\R_{n,\theta}^{(1)}}{d\mu^{\otimes n}}({\bf x}):=\exp\Big(n\theta N_1(H,Q_n,{\bf x})-nZ_n^{(1)}(\theta)\Big),\end{equation}
where 
$$Z_n^{(1)}(\theta):=\frac{1}{n}\log \E_{\mu^{\otimes n}} e^{n\theta N_1(H,Q_n,{\bf X})}\in (-\infty,\infty].$$ 
In particular, this class of models generalizes the Ising model with quadratic interaction (i.e. $v=2$) on $\pm 1$ valued spins which has attracted a lot of attention in the Probability and Statistics literature (c.f.~\cite{Basak2017,AmirAndrea2010,Ellis1978,Sly2014,deb2020fluctuations,bhattacharya2025sharp} and references therein). The general $v$ spin model above for $v>2$ for $\pm 1$ valued spins has also gained significant attention in recent years (see~\cite{mukherjee2020estimation,barra2009notes,heringa1989phase,liu2019ising,suzuki1971zeros,turban2016one,yamashiro2019dynamics} and the references therein). In the following theorem, we allow for $v\ge 2$, and also let the spins take values in $\R$. In this setting, we study two fundamental features of model~\eqref{eq:highordu}, namely the scaling limit of the log normalizing constant (i.e., $Z_n^{(1)}(\cdot)$) and the weak limit of linear functionals of the empirical measure $\mathfrak{L}_n$.

\begin{thm}\label{thm:multilinear_gibbs}  Consider the same setting as in~\cref{thm:multilinear} with $p\geq v$. Then the following conclusions hold:

(i) $Z_n^{(1)}(\theta)$ is finite for all $n$ large enough, and 
$$\lim_{n\to\infty}Z_n^{(1)}(\theta)=\sup_{f\in\mathcal{L}:\ \int_{[0,1]} \gamma(\beta(f(x)))dx<\infty}\left\{\theta G_{1,W}(f)-\int_{[0,1]}\gamma(\beta(f(x)))dx\right\}.$$

(ii) The supremum in part (i) is achieved on a set $F^{(1)}_\theta\subset \mathcal{L}$ (say), which satisfies \begin{equation}\label{eq:multweaklim}
	d_l(\mathfrak{L}_n,\Xi_1(F^{(1)}_\theta))\overset{P}{\longrightarrow}0.
\end{equation}

(iii) With $p^*=\frac{p}{p-1}\le q$, for any $g(\cdot)\in L^{p^*}[0,1]$ the following holds: \begin{equation}\label{eq:weaaksta}\inf_{f_{\theta}\in F^{(1)}_{\theta}}\bigg|\int \omega_n(u)g(u)\,du - \int f_{\theta}(u)g(u)\,du\bigg|\overset{P}{\longrightarrow}0,\end{equation}
where $\omega_n:(0,1]\mapsto \R$ is a piecewise constant function defined by
\begin{equation}\label{eq:omegn}
	\omega_n(u)=X_i\text{ for }u\in \Big(\frac{i-1}{n},\frac{i}{n}\Big], \quad i\in [n].    
\end{equation}
\end{thm}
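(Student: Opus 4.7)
The plan is to derive Theorem~\ref{thm:multilinear_gibbs} from Corollary~\ref{lem:gibbs} by reducing the variational problem over $\widetilde{\mathcal{M}}$ to one over $\mathcal{L}$, and for part (iii) supplementing the weak convergence of $\mathfrak{L}_n$ with truncation and moment bounds. For parts (i) and (ii), first verify $Z_n^{(1)}(\theta)<\infty$ for $n$ large by bounding $|nN_1|\le n^{1-v}\sum\prod|Q_n(i_a,i_b)|\prod|X_{i_a}|$ and applying H\"older's inequality with \eqref{eq:pg1}, \eqref{eq:q}, and $\tfrac{1}{p}+\tfrac{1}{q}\le 1$. Invoking Corollary~\ref{lem:gibbs}(i) with $\psi(x)=|x|$, one obtains $Z_n^{(1)}(\theta)\to \sup_{\nu\in\widetilde{\mathcal{M}},\,D(\nu|\rho)<\infty}\{\theta T_{W,\phi}(\nu)-D(\nu|\rho)\}$.

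The key step is to recast this over $\mathcal{L}$: for each admissible $\nu$, disintegrate $\nu(du,dx)=du\otimes\nu_u(dx)$ and set $f_\nu(u):=\int x\,d\nu_u(x)$. Since $\phi(x_1,\ldots,x_v)=\prod_a x_a$ and $(A_a,B_a)\stackrel{i.i.d.}{\sim}\nu$, conditioning on $A_1,\ldots,A_v$ and using independence of the $B_a$'s gives $T_{W,\phi}(\nu)=G_{1,W}(f_\nu)$. Donsker--Varadhan duality pointwise in $u$ gives
$$D(\nu_u|\mu)\ge \sup_{\theta\in\R}\{\theta f_\nu(u)-\alpha(\theta)\}=\gamma(\beta(f_\nu(u))),$$
with equality iff $\nu_u=\mu_{\beta(f_\nu(u))}$, i.e.\ $\nu=\Xi_1(f_\nu)$ (boundary cases $f_\nu(u)\in\partial\mathcal{N}$ being handled by Definition~\ref{def:tilt}). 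Integrating, $D(\nu|\rho)\ge\int_0^1\gamma(\beta(f_\nu(u)))\,du$, with equality precisely on $\Xi_1(\mathcal{L})$; membership $f_\nu\in\mathcal{L}$ follows from $D(\nu|\rho)<\infty$ combined with \eqref{eq:pg1}. This proves (i) and identifies the optimizer set $F_\theta$ from Corollary~\ref{lem:gibbs}(ii) as $\Xi_1(F^{(1)}_\theta)$; combining with $d_l(\mathfrak{L}_n,F_\theta)\stackrel{P}{\to}0$ yields (ii).

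For part (iii), fix $g\in L^{p^*}[0,1]$ and $\varepsilon>0$. Approximate by bounded Lipschitz $\tilde g$ with $\|g-\tilde g\|_{p^*}<\varepsilon$, and for $M>0$ let $T_M(x):=(x\vee(-M))\wedge M$. Then
$$\int\omega_n g\,du = \int T_M(x)\,\tilde g(u)\,d\mathfrak{L}_n(u,x) + \mathrm{Err}_n(M,\varepsilon),$$
where the error is dominated by $\|\omega_n\|_p\,\varepsilon + \|\tilde g\|_\infty\cdot\tfrac{1}{n}\sum_i|X_i|\mathbf{1}\{|X_i|>M\}$, plus an $o(1)$ discretization term coming from Lipschitz continuity of $\tilde g$. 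The main term is a bounded Lipschitz test against $\mathfrak{L}_n$, so part (ii) and $d_l(\mathfrak{L}_n,\Xi_1(F^{(1)}_\theta))\stackrel{P}{\to}0$ give convergence to $\int T_M(x)\,\tilde g(u)\,d\Xi_1(f_\theta)$ for some $f_\theta\in F^{(1)}_\theta$ (depending on $n$). Sending $M\to\infty$, the exponential moment of $\mu_{\beta(f_\theta(u))}$ inherited from \eqref{eq:pg1} gives the limit $\int\tilde g(u)f_\theta(u)\,du$, which is within $\varepsilon\|f_\theta\|_p$ of $\int g(u)f_\theta(u)\,du$ by H\"older. As $\varepsilon\to 0$, \eqref{eq:weaaksta} follows, provided the uniform moment bound $\sup_n\E_{\R_{n,\theta}^{(1)}}[\|\omega_n\|_p^p]<\infty$ and the uniform tail bound $\sup_n\E_{\R_{n,\theta}^{(1)}}[\tfrac{1}{n}\sum|X_i|\mathbf{1}\{|X_i|>M\}]\to 0$ as $M\to\infty$ hold.

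The principal technical hurdle is precisely the latter pair of uniform bounds in part (iii). Under the Gibbs measure the $X_i$ are correlated through $n\theta N_1$, and extracting moment estimates requires combining exponential tightness of the LDP in Corollary~\ref{lem:gibbs}(ii) with \eqref{eq:pg1}; alternatively, a cavity-type argument isolates the dependence of $nN_1$ on an individual $X_i$ and controls it via H\"older using \eqref{eq:pg1} and \eqref{eq:q}. A secondary technical point is the reverse direction of Donsker--Varadhan needed to prove $f_\nu\in\mathcal{L}$ in part (i).
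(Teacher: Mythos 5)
Your proposal for parts (i) and (ii) follows the same route as the paper: reduce the variational problem from $\widetilde{\mathcal{M}}$ to $\mathcal{L}$ via disintegration $\nu=du\otimes\nu_u$, identify $T_{W,\phi}(\nu)=G_{1,W}(f_\nu)$, and minimize $D(\nu_u|\mu)$ pointwise over conditional laws with a prescribed mean to land on the exponential tilt $\mu_{\beta(f_\nu(u))}$. The membership $f_\nu\in\mathcal{L}$ and the identification $F_\theta=\Xi_1(F_\theta^{(1)})$ are handled exactly as in the paper, via~\cref{lem:basicres} and the proof of~\cref{thm:multilinear}. These parts are sound.

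For part (iii) you also lay out the correct truncation-plus-Lipschitz-approximation strategy, and you correctly isolate what is needed: a uniform $p$-th moment bound $\sup_n\E_{\R^{(1)}_{n,\theta}}\big[\tfrac{1}{n}\sum_i|X_i|^p\big]<\infty$ (the paper's~\eqref{eq:lip11}), and the uniform control $\sup_{f\in F^{(1)}_\theta}\|f\|_p<\infty$ (the paper's~\eqref{eq:lip12}). However, you explicitly flag these as the ``principal technical hurdle'' and do not prove either. The sketched alternatives --- invoking ``exponential tightness of the LDP'' or a ``cavity-type argument'' --- do not immediately yield the moment bound: exponential tightness of $\mathfrak{L}_n$ under $\R^{(1)}_{n,\theta}$ controls probabilities of escaping compact sets in weak topology, but weak-topology compactness says nothing directly about $p$-th moments of the spins. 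The paper's actual argument is a change-of-measure estimate: on the high-probability event $\{|U_n({\bf X})|\le K\}$ (whose complement has $\R^{(1)}_{n,\theta}$-probability $\le 2e^{-nK/2}$ by~\eqref{eq:item2}, established inside the proof of~\cref{lem:con_not}), the Radon--Nikodym derivative $d\R^{(1)}_{n,\theta}/d\mu^{\otimes n}$ is bounded by $\exp(nK|\theta|+nC)$, and one then uses an exponential Chebyshev bound on $\tfrac{1}{n}\sum_i|X_i|^p$ under the product measure via~\eqref{eq:pg1}. For~\eqref{eq:lip12} the paper combines~\cref{lem:contain} (to show optimizers live inside some $\mathcal{A}_K$) with~\cref{lem:basicres}(i) (to push the exponential moment of $\mu$ through the entropy ball and bound $\E_{\nu^{(2)}}|V|^p$ uniformly), followed by Jensen. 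Neither step is supplied in your sketch, so as written the proof of part (iii) is incomplete at exactly the point where the real work lies.
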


\begin{remark}[Connection to weak-* topology]\label{rem:weakstar}
If $F^{(1)}_{\theta}=\{f_{\theta}\}$ is a singleton, then~\cref{thm:multilinear_gibbs} part (iii) yields that the random function $\omega_n(\cdot)$ converges in weak-* topology on $L^p[0,1]$ to the non-random function $f_{\theta}(\cdot)$, in probability. As a consequence, we can get weak limits for a large class of linear combinations of the $X_i$'s. In particular, for any continuous function $g:[0,1]\mapsto \R$ we have 
$$\frac{1}{n}\sum_{i=1}^n g(i/n)X_i\overset{P}{\longrightarrow} \int_0^1 g(u) f_{\theta}(u)\,du.$$
\end{remark}

\noindent Note that part (i) of Theorem \ref{thm:multilinear_gibbs} expresses the asymptotics of the log normalizing constant as an optimization over functions, which is a simplification of the rate function obtained in Corollary \ref{lem:gibbs} part (i), which
expresses the rate function as an optimization problem over the space of measures.

\subsubsection{\textbf{Number of monochromatic copies of subgraphs}}\label{sec:tri}

As another application of Theorem \ref{thm:main}, we study the statistic
\begin{align}\label{eq:monochrome}
N_2(H,Q_n,\bf{X}):=&n^{-v}\sum_{(i_1,\ldots,i_v)\in \mathcal{S}(n,v)} 1\{ X_{i_1}=X_{i_2}=\ldots=X_{i_v}\} \prod_{(a,b)\in E(H)}Q_n(i_a,i_b),
\end{align}
where $(X_1,\ldots,X_n)\stackrel{i.i.d.}{\sim}\mu$, where $\mu$ is an arbitrary measure supported on $[c]=\{1,\ldots, c\}$, such that $\mu_r:=\P(X_1=r)>0$ for all $r\in [c]$. This corresponds to the choice $\mathcal{X}=[c]$, and $\phi(x_1,\ldots,x_v)=1\{x_1=\ldots=x_v\}$. In particular, if $Q_n$ is the scaled adjacency matrix of a graph $\mathcal{G}_n$, and $\mu$ is the uniform probability measure on $[c]$, then $N_2(H,Q_n,\bf{X})$ is the (scaled) number of monochromatic copies of the subgraph $H$ in $\mathcal{G}_n$, when $\mathcal{G}_n$ is colored uniformly at random with $c$ colors.
The asymptotic distribution of $N_2(H,Q_n,\bf{X})$ has been studied extensively in the literature for several choices of $Q_n$ (see \cite{Bhattacharya2017, dasgupta2005matching, diaconis2006methods, bhattacharya2022normal, arratia2016asymptotic, cerquetti2006poisson, bhattacharya2020second, bhattacharya2019monochromatic}). In particular, it is known that if $\mathcal{G}_n$ is a sequence of dense graphs (where the number of edges satisfy $|E(\mathcal{G}_n)|\propto n^2$) which converge in strong cut distance, then $N_2(H,Q_n,\bf{X})$ converges in distribution, after appropriate centering and scaling, to a random variable which is a (possibly infinite) weighted sum of independent centered $\chi^2_{c-1}$ random variables (see~\cite[Theorem 1.4]{Bhattacharya2017}~and~\cite[Theorem 1.3]{bhattacharya2019monochromatic}). The question of LDP for $N_2(H,Q_n,\bf{X})$ remained open, to the best of our knowledge. Our next result provides an answer to this question. Before stating it, we need the following definitions.
\begin{defn}\label{def:g2}
Let $\mathcal{F}_{c}$ denote the set of all measurable functions ${\bf f}=(f_1,\ldots,f_c):[0,1]\mapsto [0,1]^c$ such that $\sum_{r=1}^c f_r(x)=1$ for all $x\in [0,1]$.  For any $W\in \mathcal{W}$, define the functional $G_{2,W}(.):\mathcal{F}_c\mapsto \R$ by setting
$$G_{2,W}({\bf f}):=\sum_{r\in [c]}\int_{[0,1]^v}\prod_{(i,j)\in E(H)} W(x_i,x_j) \prod_{a=1}^v f_r(x_a)dx_a.$$
Note that $G_{2,W}({\bf f})<\infty$ for all ${\bf f}\in \mathcal{F}_c$ provided~\eqref{eq:q} holds. This follows from~\cref{lem:Tgraphon0} by choosing $\phi(x_1,\ldots,x_v)=\prod_{a=1}^v f(x_a).$
\end{defn}

\begin{defn}\label{def:xi2}
Define a map $\Xi_2:\mathcal{F}_c\mapsto \widetilde{\mathcal{M}}$, by the following construction: for any ${\bf f}\in \mathcal{F}_c$, we say $(U,V)\sim \Xi_2({\bf f})$ if $U\sim \mathrm{U}[0,1]$, and given $U=u$, let $V=r\in [c]$ with probability $f_r(u)$.

\end{defn}

We now state an LDP for $N_2(H,Q_n)$.

\begin{cor}\label{thm:monochromatic}
Suppose that \eqref{eq:cut_con} and \eqref{eq:q} hold for some $W\in \mathcal{W}$ and $q>1$. Then $N_2(H,Q_n)$ satisfies a LDP with the good rate function 
\begin{equation}\label{eq:i2}
I_2(t):=\inf_{{\bf f}\in \mathcal{F}_c:\ G_{2,W}({\bf f})=t}  \Big\{ \int_{[0,1]}\sum_{r=1}^c f_{r}(u)\log \frac{f_{r}(u)}{\mu_r}du\Big\}.
\end{equation}

\end{cor}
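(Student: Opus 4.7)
\textbf{Proof plan for Corollary \ref{thm:monochromatic}.} The statistic $N_2(H,Q_n,\mathbf{X})$ is precisely the inhomogeneous U-statistic of the form \eqref{eq:U} with $\mathcal{X}=[c]$ and $\phi(x_1,\ldots,x_v)=\mathbf{1}\{x_1=\cdots=x_v\}$. Since $|\phi|\le 1$, the tail condition \eqref{eq:tailp} holds with $\psi\equiv 1$ and $p=\infty$, so the H\"older condition $1/p+1/q\le 1$ reduces to $q>1$, which is part of the hypothesis. Hence Theorem \ref{thm:main} applies, yielding an LDP with good rate function
\begin{equation*}
I_0(t)=\inf_{\nu\in\widetilde{\mathcal{M}}:\,D(\nu|\rho)<\infty,\,T_{W,\phi}(\nu)=t} D(\nu|\rho),
\end{equation*}
and the task is to identify $I_0(t)=I_2(t)$ via a change of variables from probability measures on $[0,1]\times[c]$ to vectors of functions in $\mathcal{F}_c$.

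The key step is the bijection between $\widetilde{\mathcal{M}}$ and $\mathcal{F}_c$. Given $\nu\in\widetilde{\mathcal{M}}$ on $[0,1]\times[c]$ with uniform first marginal, disintegrate it as $d\nu(u,r)=du\,\nu_u(\{r\})$ and set $f_r(u):=\nu_u(\{r\})$; then $\mathbf{f}=(f_1,\ldots,f_c)\in\mathcal{F}_c$. Conversely, any $\mathbf{f}\in\mathcal{F}_c$ gives back $\nu=\Xi_2(\mathbf{f})$ from Definition \ref{def:xi2}. Under this correspondence, I compute $T_{W,\phi}(\nu)$ using the i.i.d.~representation $(A_a,B_a)\sim \nu$: conditional on $(A_1,\ldots,A_v)$, the $B_a$ are independent with $B_a\mid A_a=u$ distributed as $\nu_u$, so
\begin{equation*}
\E\bigl[\mathbf{1}\{B_1=\cdots=B_v\}\mid A_1,\ldots,A_v\bigr]=\sum_{r=1}^c\prod_{a=1}^v f_r(A_a),
\end{equation*}
and integrating against the product of the edge weights of $H$ against $(A_a)\stackrel{i.i.d.}{\sim} U[0,1]$ yields exactly $G_{2,W}(\mathbf{f})$.

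For the divergence, since $\rho=U[0,1]\otimes\mu$ has density $\mu_r$ at $(u,r)$ and $\nu$ has density $f_r(u)$, the Radon--Nikodym derivative is $f_r(u)/\mu_r$, giving
\begin{equation*}
D(\nu|\rho)=\int_{[0,1]}\sum_{r=1}^c f_r(u)\log\frac{f_r(u)}{\mu_r}\,du.
\end{equation*}
Because $\mu_r>0$ for every $r\in[c]$ and $x\log x\ge -1/e$, this divergence is always finite, so the constraint $D(\nu|\rho)<\infty$ imposes no restriction. Substituting both identifications into $I_0(t)$ reproduces the formula for $I_2(t)$ in \eqref{eq:i2}, completing the proof.

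There is no serious obstacle here: the argument is essentially a bookkeeping reduction of Theorem \ref{thm:main} via the natural parametrization of conditional distributions on the finite set $[c]$. The only mild care needed is verifying measurability of the disintegration $u\mapsto \nu_u$ (standard since $[0,1]\times[c]$ is Polish) and checking that the map $\nu\leftrightarrow\mathbf{f}$ preserves the infima, which is immediate from the pointwise identities above.
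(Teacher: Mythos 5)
Your proof is correct and follows essentially the same route as the paper: apply Theorem \ref{thm:main} with $\phi=\mathbf{1}\{x_1=\cdots=x_v\}$, identify $\nu\in\widetilde{\mathcal{M}}$ with $\mathbf{f}\in\mathcal{F}_c$ via the conditional probabilities $f_r(u)=\P_\nu(V=r\mid U=u)$, and rewrite $T_{W,\phi}(\nu)=G_{2,W}(\mathbf{f})$ and $D(\nu|\rho)$ as the stated entropy integral. Your extra observation that $D(\nu|\rho)$ is automatically finite (since $\mu_r>0$ and $x\log x$ is bounded below) is a small helpful clarification that the paper leaves implicit.
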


Utilizing the LDP in~\cref{thm:monochromatic}, we study the behavior of a Gibbs model with $N_2(H,Q_n)$ as sufficient statistic. Towards this direction, fix $\theta\in \R$, and let $\R_{n,\theta}^{(2)}$ be a probability distribution on $[c]^n$ defined by setting
\begin{equation}\label{eq:genpot}
\frac{d\R_{n,\theta}^{(2)}}{d\mu^{\otimes n}}({\bf x}):=\exp\Big(n\theta N_2(H,Q_n,{\bf x})-nZ_n^{(2)}(\theta)\Big),
\end{equation}
where 
$$Z_n^{(2)}(\theta):=\frac{1}{n}\log \E_{\mu^{\otimes n}} e^{n\theta N_2(H,Q_n,{\bf X})}\in (-\infty,\infty].$$
The case $v=2$ in model~\eqref{eq:genpot} above corresponds to the standard Potts model which has been studied extensively in the Probability and Statistics literature (c.f.~\cite{potts1952some,ellis1990limit,dembo2014replica,eichelsbacher2015rates}). In the following theorem, we study two fundamental features of model~\eqref{eq:genpot}, namely the scaling limit of the log normalizing constant (i.e., $Z_n^{(2)}(\cdot)$) and the weak limit of the empirical measure $\mathfrak{L}_n$.

\begin{thm}\label{thm:monochromatic_gibbs}
Consider the same setting as in~\cref{thm:monochromatic}.

(i) We have 
\begin{align*}\lim_{n\to\infty}Z_n^{(2)}(\theta)=&\sup_{\mathbf{f}\in \mathcal{F}_c}\left\{\theta G_{2,W}({\bf f})-\int_{[0,1]}\sum_{r=1}^c f_r(u)\log \frac{f_r(u)}{\mu_r}du\right\}
\end{align*}

(ii) The supremum in part (i) is achieved on a set $F^{(2)}_\theta\subset \mathcal{F}_c$, say. Then, under $\R_{n,\theta}^{(2)}$, we have:
$$d_l(\mathfrak{L}_n,\Xi_2(F^{(2)}_\theta))\overset{P}{\longrightarrow}0.$$ 

(iii) Setting $$\omega_{n,r}(x)=1\{X_i=r\}\text{ for }x\in \Big(\frac{i-1}{n},\frac{i}{n}\Big),\, r\in [c],$$
for any $\{g_r(\cdot)\}_{r\in [c]}$ with $g_r(\cdot)\in L^{1}[0,1]$, the following holds: $$\inf_{{\bf f}_{\theta}\in F^{(2)}_{\theta}}\max_{r\in [c]}\bigg|\int \omega_{n,r}(u)g_r(u)\,du - \int f_{r,\theta}(u)g_r(u)\,du\bigg|\overset{P}{\longrightarrow}0,$$


\end{thm}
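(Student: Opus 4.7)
The plan is to derive parts (i) and (ii) directly from \cref{lem:gibbs} via a parametrization of measures $\nu\in\widetilde{\mathcal{M}}$ with $D(\nu\mid \rho)<\infty$ by vectors $\mathbf{f}\in\mathcal{F}_c$, and to derive part (iii) from part (ii) by an $L^1$-approximation argument. The hypotheses of \cref{lem:gibbs} are easily verified: $\mathcal{X}=[c]$ is compact and $\phi(x_1,\ldots,x_v)=\mathbf{1}\{x_1=\cdots=x_v\}$ is uniformly bounded, so \eqref{eq:tailp} holds with $\psi\equiv 1$ and any $p\geq v$, while \eqref{eq:cut_con} and \eqref{eq:q} are in force by assumption. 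The parametrization decomposes $\nu=\mathrm{U}[0,1]\otimes \nu(\cdot\mid u)$ and sets $f_r(u):=\nu(\{r\}\mid u)$, producing $\mathbf{f}=(f_1,\ldots,f_c)\in\mathcal{F}_c$; its inverse is exactly the map $\Xi_2$ of \cref{def:xi2}. A standard disintegration gives
\[
D(\nu\mid\rho)=\int_0^1 D\bigl(\nu(\cdot\mid u)\,\|\,\mu\bigr)\,du=\int_0^1 \sum_{r=1}^c f_r(u)\log\frac{f_r(u)}{\mu_r}\,du,
\]
and by conditioning $(A_a,B_a)\stackrel{iid}{\sim}\nu$ on $(A_1,\ldots,A_v)$ and using $\phi=\mathbf{1}\{x_1=\cdots=x_v\}$,
\[
T_{W,\phi}(\nu)=\int_{[0,1]^v}\sum_{r=1}^c \prod_{a=1}^v f_r(x_a)\prod_{(a,b)\in E(H)}W(x_a,x_b)\,dx=G_{2,W}(\mathbf{f}).
\]
Substituting these identities into the variational formula for $Z(\theta)$ from \cref{lem:gibbs}(i) yields part (i), while the bijection identifies the optimizer set $F_\theta$ of \cref{lem:gibbs}(ii) with $\Xi_2(F^{(2)}_\theta)$, producing part (ii).

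For part (iii), the real issue is that the test functions $g_r$ only lie in $L^1[0,1]$, whereas $d_l$ controls only integration against Lipschitz-$1$ functions bounded by $1$. Fix $\varepsilon>0$ and choose Lipschitz $\tilde g_r:[0,1]\to\mathbb{R}$ with $\|g_r-\tilde g_r\|_{L^1}\le\varepsilon$. Since $|\omega_{n,r}|\le 1$ and $f_{r,\theta}\in [0,1]$, replacing $g_r$ by $\tilde g_r$ alters each side by at most $\varepsilon$. For each $r\in[c]$, the function $h_r(u,r'):=\tilde g_r(u)\mathbf{1}\{r'=r\}$ on $\Sigma$, after rescaling by a constant $C_r$ depending on $\|\tilde g_r\|_\infty$ and its Lipschitz norm, is Lipschitz-$1$ and $[-1,1]$-valued on the compact space $\Sigma=[0,1]\times[c]$. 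A Riemann-sum bound yields
\[
\int \omega_{n,r}(u)\tilde g_r(u)\,du = \int h_r\,d\mathfrak{L}_n + O(1/n),\qquad \int f_{r,\theta}(u)\tilde g_r(u)\,du = \int h_r\,d\Xi_2(\mathbf{f}_\theta)
\]
for any $\mathbf{f}_\theta\in F^{(2)}_\theta$. Since $[c]$ is finite, part (ii) furnishes a random selection $\mathbf{f}^{(n)}_\theta\in F^{(2)}_\theta$ with $\max_{r\in [c]}|\int h_r\,d\mathfrak{L}_n-\int h_r\,d\Xi_2(\mathbf{f}^{(n)}_\theta)|\stackrel{P}{\to}0$. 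Combining the four estimates and sending $\varepsilon\downarrow 0$ after $n\to\infty$ completes the argument.

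The main obstacle is this final transfer from $d_l$-convergence, which is phrased through Lipschitz test functions, to convergence of integrals against arbitrary $L^1$ functions. The uniform bound $|\omega_{n,r}|,|f_{r,\theta}|\leq 1$ is what makes the $L^1$ density of Lipschitz functions yield a quantitative error bound rather than a merely qualitative statement; without it, one could only conclude weak$^*$ convergence against continuous test functions and not against $L^1$ ones. The remainder is essentially a change of variables rephrasing \cref{lem:gibbs} from a measure-theoretic to a function-theoretic form.
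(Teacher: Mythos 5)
Your proposal is correct and follows essentially the same route as the paper, which dispatches this theorem with a single sentence pointing back to the proof of \cref{thm:multilinear_gibbs}: parts (i)--(ii) reparametrize the variational problem of \cref{lem:gibbs} from measures $\nu\in\widetilde{\mathcal{M}}$ to vectors $\mathbf{f}\in\mathcal{F}_c$ via $f_r(u)=\nu(\{r\}\mid u)$ (equivalently $\Xi_2^{-1}$), exactly as you do. For part (iii) you correctly streamline the multilinear argument: there the proof first truncates $V$ via $h_M$ to make the test function bounded Lipschitz and then controls the truncation error with \eqref{eq:lip11}--\eqref{eq:lip12}, whereas here $\omega_{n,r}$ and $f_{r,\theta}$ are already in $[0,1]$ so the truncation stage disappears and only the Lipschitz--$L^1$ density step (your $\tilde g_r$ approximation, with the Riemann-sum correction since $\mathfrak{L}_n$ and $\tilde{\mathfrak{L}}_n$ differ by $O(1/n)$ in $d_l$) is needed; your observation that the uniform bound is precisely what upgrades Lipschitz-dual convergence to $L^1$-dual convergence is the same point the paper makes in the multilinear case via $\lVert\omega_n\rVert_p$ and $\sup_f\lVert f\rVert_p$.
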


\begin{remark}
   \cref{thm:monochromatic_gibbs} part (iii) proves a (vector) convergence of the function $\{\omega_{n,r}\}_{r\in [c]}$ to $\{\omega_{\infty,r}\}_{r\in [c]}$ in the special case $F^{(2)}_\theta=\{\omega_\infty\}$ is a singleton, similar to Theorem \ref{thm:multilinear_gibbs} part (iii). As a consequence, we get a weak limit for the empirical count vector
   $$\left(\frac{1}{n}\sum_{i=1}^n \mathbbm{1}(X_i=1),\ldots ,\frac{1}{n}\sum_{i=1}^n \mathbbm{1}(X_i=c)\right)\overset{P}{\longrightarrow}\left(\int_{0}^1 \omega_{\infty,1}(u)\,du,\ldots ,\int_{0}^1 \omega_{\infty,c}(u)\,du\right).$$
   However, note that the convergence in~\cref{thm:monochromatic_gibbs}, part (iii), is not exactly in weak$^*$ topology on $L_\infty^{[c]}$, as the dual of $L_\infty$ is strictly larger than $L_1$.
\end{remark}

In the special case when $Q_n$ is the adjacency matrix of a complete graph and $H=K_2$, the Gibbs model in \eqref{eq:genpot} reduces to the \emph{mean field} Potts model on the complete graph. The optimization problem of~\cref{thm:monochromatic_gibbs} part (i) has been studied extensively in this special case  (see~\cite{gandolfo2010limit,wu1982potts}). We plan to study the general optimization problem in a future paper.

\begin{remark}\label{rem:genen}
We have presented \cref{lem:gibbs}, \cref{lem:con_not}, \cref{thm:multilinear}, \cref{thm:monochromatic}, \cref{thm:multilinear_gibbs}, \cref{thm:monochromatic_gibbs}, in the setting of~\cref{thm:main}. All these results go through verbatim in the setting of~\cref{cor:main} without any change in the proof technique. We omit the details for brevity.
\end{remark}
\subsection{Overview of proof}

The starting point for the proofs is based on a simple but important LDP for the bivariate empirical measure $\mathfrak{L}_n=\frac{1}{n}\sum_{i=1}^n\delta_{i/n,X_i}$ with respect to weak topology, where $(X_1,\ldots,X_n)$ are IID from a measure $\mu$ on a Polish space $\mathcal{X}$ (c.f.~\cref{lmm:bivarldp}). Focusing on the V-statistic $V_n({\bf X})$, we can write it \enquote{approximately} as $T_{W_{Q_n},\phi}(\mathfrak{L}_n)$. Using the assumption \eqref{eq:tailp} along with Holder's inequality, we are able to replace $\phi$ by its truncation $\phi_M$ (\cref{lem:phim}). Under the assumption that $W_{Q_n}$ converges in weak cut metric to $W$, counting lemma for $L_q$ graphons (c.f.~\cref{lem:gen_holder}, \cite[Proposition 2.19]{borgs2018p}), we can replace $T_{W_{Q_n},\phi_M}(\mathfrak{L}_n)$ by $T_{W,\phi_M}(\mathfrak{L}_n)$ (\cref{lem:Tgraphon}). Finally, we show that $T_{W,\phi_M}(.)$ is a continuous function in weak topology (\cref{lem:Tcont}), and so an application of contraction principle gives the desired LDP for $V_n({\bf X})$. The proof of LDP for $U_n({\bf X})$ then proceeds via showing exponential equivalence (\cref{lem:phim}), thus completing the proof of~\cref{thm:main}.
The above proof requires that $\|W_{Q_n}\|_{q\Delta}$ is bounded (due to the dependence on the counting lemma in \cite[Proposition 2.19]{borgs2018p}). Focusing on the case when $H$ is a tree,~\cref{cor:main} derives the same LDP under the much weaker condition that $\|W_{Q_n}\|_1$ is bounded. In this case we use an alternative counting technique which bypasses the dependence on the maximum degree of $H$, but instead uses the tree structure. This allows~\cref{cor:main} to apply to sparse Erd\H{o}s-R\'enyi graphs, as soon as $np_n\to \infty$ (potentially up to logarithmic factors; see \cref{rem:sparse}).
\par

Focusing on the two applications of multilinear forms and monochromatic copies of subgraphs, using the separability of the function $\phi(.)$ in both cases, we show that the optimization problem in the rate function of the LDP (which in general is over the space of bivariate measures on $\Sigma=[0,1]\times \mathcal{X}$) is attained on the space of conditional exponential families, where the exponential families are specified in terms of a tilt with respect to the measure $\mu$. The rate function then simplifies as an optimization over the space of tilt functions, which is often more tractable. Using this machinery, we study Gibbs measures with these statistics as sufficient statistics, and derive asymptotics of the log normalizing constant (using Varadhan's Lemma). We also show that the optimizing tilt functions can be viewed as limits of a random function in weak* topologies. In particular, this can be used to establish weak laws for linear functions of ${\bf X}$ in multilinear form Gibbs model (c.f.~\cref{thm:multilinear_gibbs}), as well as the proportion vector in monochromatic copies Gibbs model (c.f.~\cref{thm:monochromatic_gibbs}). These Gibbs models generalize the celebrated Ising and Potts models in many ways, by allowing for interactions of a general order, non compact spaces, and non uniform coloring distributions.


\subsection{Future scope}
Our current LDP results require the assumption that $\|W_{Q_n}\|_{q\Delta}$ is bounded for some $q>1$, unless $H$ is a tree. In particular, this assumption rules out the case when $H=K_3$, and $Q_n$ is a Erd\H{o}s-R\'enyi graph with parameter $p_n$ (scaled by $p_n$), as soon as $p_n\to 0$. It remains to be seen whether this requirement is an artifact of our proof technique. A related problem is to study the LDP for both the statistics of interest (multilinear forms and monochromatic copies) in the case when $Q_n$ converges in a mode different from cut metric (such as graphs converging in local weak topology).
Focusing on optimization, it is of interest to analyze the resulting constrained optimization problem in the LDP for both examples, and derive the conditional behavior of the system under rare events. A related problem is to study the optimization problem arising in Gibbs measures. In both these cases, an interesting question is to study the regimes of symmetry/symmetry breaking (i.e.~the optimizing functions are constants/not constants respectively). In particular, constant optimizers correspond to  product probability measures on $\Sigma=[0,1]\times \mu$, thereby revealing additional structure (universal behaviors similar to \cite[Thm 2.1]{Basak2017}). The nuances with optimization has been partially studied by the authors in a subsequent draft in more detail ~\cite{bhattacharya2023gibbs}.

{\color{black}\subsection{Acknowledgment} We thank Bhaswar Bhattacharya for helpful discussions at various stages of  the manuscript. We also thank the Associate Editor and the two anonymous referees for their comments and suggestions, which greatly improved the presentation of the paper.}

\section{Proofs of Main results}\label{sec:main}

\noindent A first step of our proofs is the following extension of Sanov's theorem (c.f.~\cite[Theorem 6.2.10]{DZ}). 
For stating the lemma, we define the following (good) rate function 
\begin{equation}\label{eq:grfi}
I(\nu)= \begin{cases}
D(\nu|  \rho), \qquad \text{if } \nu\in \widetilde{\mathcal{M}},\\
\infty \qquad \text{otherwise}
\end{cases}
\end{equation}
\begin{lmm}\label{lmm:bivarldp}
\begin{enumerate}
\item[(i)] With {\color{black}$(\mathfrak{L}_n)_{n\ge 1}$} as in~\eqref{eq:ln}, $\mathfrak{L}_n$ satisfies a LDP on $\mathcal{M}$ with respect to weak topology, with the good rate function $I$ defined by \eqref{eq:grfi}.

\item[(ii)]
Define a sequence of bivariate measures {\color{black}$(\tml_n)_{n\ge 1}\in \widetilde{\mathcal{M}}$} as the joint law of $(U,  X_{\lceil nU\rceil})$ conditional on ${\bf X}$, where $U\sim \mathrm{U}[0,1]$. Then $\tilde{\mathfrak{L}}_n$ satisfies a LDP on $\widetilde{\mathcal{M}}$ with respect to weak topology, with the good rate function $D(\cdot| \rho)$.
\end{enumerate}
\end{lmm}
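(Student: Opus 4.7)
The plan is to prove part (i) via Bryc's inverse of Varadhan's lemma (\cite[Theorem 4.4.2]{DZ}): it suffices to show that $\mathfrak{L}_n$ is exponentially tight on $\mathcal{M}$ equipped with the weak topology, that the limit
\begin{equation*}
\Lambda(f):=\lim_{n\to\infty}\frac{1}{n}\log \E\exp\Bigl(n\int f\,d\mathfrak{L}_n\Bigr)
\end{equation*}
exists for every $f\in C_b(\Sigma)$, and that its Legendre transform agrees with the good rate function $I$. Part (ii) will then follow from part (i) by an exponential equivalence argument.

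Computing $\Lambda(f)$ is straightforward. Since $X_1,\ldots,X_n\stackrel{\text{i.i.d.}}{\sim}\mu$ we have
\begin{equation*}
\frac{1}{n}\log \E\exp\Bigl(\sum_{i=1}^n f(i/n, X_i)\Bigr) = \frac{1}{n}\sum_{i=1}^n \psi_f(i/n),\qquad \psi_f(u):=\log \E_\mu e^{f(u,X_1)}.
\end{equation*}
For $f\in C_b(\Sigma)$, the function $\psi_f$ is in $C_b([0,1])$ by dominated convergence, so the Riemann sum converges to $\Lambda(f)=\int_0^1 \psi_f(u)\,du$. Exponential tightness of $\mathfrak{L}_n$ reduces to tightness of $\mu$ on the Polish space $\mathcal{X}$, since the first marginal of $\mathfrak{L}_n$ is supported in the compact $[0,1]$; picking compact sets $K_\varepsilon\subset \mathcal{X}$ with $\mu(K_\varepsilon^c)<\varepsilon^2$ and applying Cram\'er's theorem to the Bernoulli variables $\mathbbm{1}\{X_i\notin K_\varepsilon\}$ yields exponential tightness in standard fashion.

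The main technical point is identification of the Legendre transform with $I$. For $\nu\in\widetilde{\mathcal{M}}$, disintegrate $\nu=\mathrm{U}[0,1]\otimes \nu_u$ so that
\begin{equation*}
\sup_{f\in C_b(\Sigma)}\Bigl\{\int f\,d\nu-\Lambda(f)\Bigr\}=\sup_f \int_0^1\Bigl[\int f(u,x)\,d\nu_u(x)-\log\int e^{f(u,x)}\,d\mu(x)\Bigr]\,du.
\end{equation*}
The Donsker--Varadhan variational formula bounds the inner bracket pointwise by $D(\nu_u|\mu)$, giving the upper bound $\Lambda^*(\nu)\le D(\nu|\rho)$. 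For the matching lower bound when $D(\nu|\rho)<\infty$, one truncates and mollifies $\log(d\nu/d\rho)$ to obtain a sequence in $C_b(\Sigma)$ whose Legendre evaluation approaches $D(\nu|\rho)$; producing a \emph{single} continuous bounded $f$ that simultaneously realizes the pointwise suprema up to $\varepsilon$ in $u$ is the main technical obstacle, but it is standard in Sanov-type proofs. If on the other hand $\nu^{(1)}\neq \mathrm{U}[0,1]$, then taking $f(u,x)=c\,g(u)$ for $g\in C_b([0,1])$ with $\int g\,d\nu^{(1)}\neq \int_0^1 g(u)\,du$ and letting $|c|\to\infty$ yields $\Lambda^*(\nu)=+\infty$, matching $I(\nu)$.

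For part (ii), a direct estimate shows that for any $f:\Sigma\to [-1,1]$ which is $1$-Lipschitz,
\begin{equation*}
\Bigl|\int f\,d\tml_n-\int f\,d\mathfrak{L}_n\Bigr|=\Bigl|\sum_{i=1}^n \int_{(i-1)/n}^{i/n}\bigl[f(u,X_i)-f(i/n, X_i)\bigr]\,du\Bigr|\le \frac{1}{2n}
\end{equation*}
deterministically. Thus $d_l(\tml_n,\mathfrak{L}_n)\le 1/(2n)$ almost surely, so $\tml_n$ and $\mathfrak{L}_n$ are exponentially equivalent in the weak topology on $\mathcal{M}$. Since $\tml_n\in \widetilde{\mathcal{M}}$ by construction and $\widetilde{\mathcal{M}}$ is closed in $\mathcal{M}$, the LDP for $\mathfrak{L}_n$ from part (i) transfers to $\tml_n$ on $\widetilde{\mathcal{M}}$, with rate function $D(\cdot|\rho)=I|_{\widetilde{\mathcal{M}}}$.
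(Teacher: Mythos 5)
The step where you invoke Bryc's inverse Varadhan lemma (\cite[Theorem 4.4.2]{DZ}) is the problem. Bryc's lemma requires the limit $\lim_n \frac1n\log\E\exp\bigl(n\,h(\mathfrak{L}_n)\bigr)$ to exist for \emph{every} $h\in C_b(\mathcal{M})$, i.e.\ for all bounded continuous functionals on the space of measures, not just for the linear functionals $\nu\mapsto \int f\,d\nu$ with $f\in C_b(\Sigma)$. You only verify the limit for this much smaller linear class, so Bryc's lemma as cited does not apply. Extending your computation to nonlinear $h\in C_b(\mathcal{M})$ is not a routine continuation: it is essentially equivalent to already knowing the LDP (via Varadhan's lemma), which is circular.

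The correct result to use when one only wants to test against linear functionals is the abstract G\"artner--Ellis / projective limit theorem, \cite[Cor 4.6.14]{DZ}, which is what the paper invokes. That theorem exchanges the weaker hypothesis (linear test functions only, plus exponential tightness) for the additional requirement that the limiting log--moment generating functional $\mathcal{T}(f)=\int_0^1\log\int_{\mathcal{X}}e^{f(u,x)}\,d\mu(x)\,du$ be Gateaux differentiable. The paper checks this explicitly: for $f,g\in C_b(\Sigma)$, the map $t\mapsto \mathcal{T}(f+tg)$ is differentiable at $0$ by \cite[Lemma 2.2.5(c)]{DZ}, with derivative $\int_0^1\bigl(\int g\,e^f\,d\mu / \int e^f\,d\mu\bigr)\,du$. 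Your proposal omits this step entirely, so as written part (i) is incomplete. The rest of your argument --- exponential tightness via compact exhaustion of $\mathcal{X}$, the Donsker--Varadhan upper bound on $\Lambda^*$, the choice of exponential-tilt $f$ for the lower bound, the $|c|\to\infty$ trick when $\nu^{(1)}\neq\mathrm{U}[0,1]$, and the exponential equivalence computation $d_l(\tml_n,\mathfrak{L}_n)\le \frac{1}{2n}$ in part (ii) --- all match the paper and are fine. Replace the appeal to \cite[Theorem 4.4.2]{DZ} by \cite[Cor 4.6.14]{DZ} and add the Gateaux differentiability verification, and the proof goes through.
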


\begin{proof} 
\begin{enumerate}
\item[(i)]
To show $\mathfrak{L}_n$ satisfies a LDP, we will use  \cite[Cor 4.6.14]{DZ}. To this effect, we claim that the limit
\begin{equation*}
\mathcal{T}(f):=\lim\limits_{n \rightarrow \infty} \frac{1}{n} \log \ee\Big(e^{\sum_{i=1}^nf(\frac{i}{n},X_i)}\Big)
\end{equation*}
{\color{black} exists as a finite real number for all $f \in C_b(\Sigma)$ (recall that $C_b(\Sigma)$ is the space of all bounded continuous functions from $\Sigma=[0,1]\times\mathcal{X}$ to $\R$), and the functional $\mathcal{T}(.)$ is Gateaux differentiable}. To this end, note that,
{\color{black}\begin{align*}
\frac{1}{n} \log \ee(e^{\sum_{i=1}^{n}f(\frac{i}{n},X_i)})&=\frac{1}{n}\sum_{i=1}^n \log\int_{\mathcal{X}} e^{f(\frac{i}{n},x)}\,d\mu(x)\nonumber \\ &
\rightarrow \int_{0}^{1} \left(\log \int_{\mathcal{X}} e^{f(u,x)}d\mu(x)\right)du.
\end{align*}
as $f$ is bounded and continuous on $\Sigma=[0,1]\times\mathcal{X}$.}  
Therefore, {\color{black}$$\mathcal{T}(f)=\int_{0}^{1} \left(\log \int_{\mathcal{X}} e^{f(u,x)}d\mu(x)\right)du$$}
is well-defined and finite for all $f \in C_b(\Sigma)$. Moreover, for $f,g \in C_b(\Sigma)$, {\color{black} and note that the function $$\Lambda_{f,g}(u,t) := \log \int_{\mathcal{X}} e^{f(u,x)+tg(u,x)}d\mu(x)$$ is differentiable at $t=0$ with 
$$\Lambda'_{f,g}(u,0)= \Big[\frac{\int_{\mathcal{X}} g(u,x)e^{f(u,x)} d\mu(x)}{\int_{\mathcal{X}} e^{f(u,x)} d\mu(x)}\Big]$$
invoking \cite[(2.2.9)]{DZ}. Consequently, using the fact that $g$ is bounded, it follows by DCT that $t \mapsto \mathcal{T}(f+tg)= \int_{0}^{1} \Lambda_{f,g}(u,t) du$ is also differentiable at $t=0$ and 
 \[
 \frac{d}{dt}\mathcal{T}(f+tg)|_{t=0}= \int_{0}^{1} \Big[\frac{\int_{\mathcal{X}} g(u,x)e^{f(u,x)} d\mu(x)}{\int_{\mathcal{X}} e^{f(u,x)} d\mu(x)}\Big] du,
 \]}
Thus $\mathcal{T}(.)$ is Gateaux differentiable. Also, $\mathfrak{L}_n$ is exponentially tight, as both the marginals $\frac{1}{n}\sum_{i=1}^n\delta_{\frac{i}{n}}$ and $\frac{1}{n}\sum_{i=1}^n\delta_{X_i}$ are exponentially tight (see~\cite[Lemma 6.2.6]{DZ}). Recalling that $\Sigma=[0,1]\times \mathcal{X}$ and invoking \cite[Cor 4.6.14]{DZ}, $\mathfrak{L}_n$ satisfies LDP with rate function given by 
\begin{equation}\label{r1}
\mathcal{T}^\star(\nu)= \sup_{f \in C_b(\Sigma)}\Big\{\int_{\Sigma} f(u,x)d\nu(u,x) - \mathcal{T}(f)\Big\}, \qquad \nu \in \mathcal{M}.
\end{equation}
Finally, we need to check that $\mathcal{T}^\star \equiv I$. Since $I(.)$ is convex, using the duality of Legendre transform, with $\rho$ as in \eqref{eq:mu0}, it is enough to show:
\begin{align}\label{eq:suffice1}
\mathcal{T}(f)= \sup_{\nu\in \mathcal{M}}\Big\{\int_{\Sigma} f(u,x)d\nu(u,x) - I(\nu)\Big\}=\sup_{\nu\in \widetilde{\mathcal{M}}}\Big\{\int_\Sigma fd\nu- D(\nu|\rho)\Big\},
\end{align}
Choosing $\nu$ such that $d\nu(x|u)= \frac{e^{f(u,x)}}{C(u)}$ where $C(u):= \int_{\mathcal{X}}  e^{f(u,x)} d \mu(x)$, {\color{black}observe that $\frac{d\nu}{d\rho}(u,x)=\frac{e^{f(u,x)}}{C(u)}$ and so 
\begin{align*}
    \int f\,d\nu&=\int_{\Sigma} \frac{f(u,x)e^{f(u,x)}}{C(u)}\,d\rho(u,x),\\
    D(\nu|\rho)&=\int_{\Sigma} \frac{e^{f(u,x)}}{C(u)} (f(u,x) - \log C(u)) d\rho(u,x).
\end{align*}
Using the definitions of $C(\cdot)$ and $\mathcal{T}(\cdot)$, it follows that 
\begin{align*}
\int_{\Sigma} fd\nu- D(\nu|\rho)= &\int_{\Sigma} \frac{e^{f(u,x)} \log C(u)}{C(u)}\,d\rho(u,x)\\
=&\int_{0}^{1} \log C(u)du= \int_{0}^{1} \left(\log \int_{\mathcal{X}} e^{f(u,x)}d\mu(x)\right)du=\mathcal{T}(f).
\end{align*}}
Thus, we have shown LHS $\le$ RHS, in \eqref{eq:suffice1}. Towards the other direction, {\color{black}for any $\nu$ absolutely continuous with respect to $\rho$}, setting $\phi(u,x):=\frac{d\nu}{d\rho}(u,x)$ note that $\phi(u,.)d\mu$ is a probability measure on $\mathcal{X}$ for any $u\in [0,1]$, as the first marginal of $\nu$ is $\mathrm{U}[0,1]$. Thus, an application of Jensen's inequality gives
\begin{align*}
\log \int_{\mathcal{X}} e^{f(u,x)} d\mu(x)&=  \log \int_{\mathcal{X}}\left(\frac{ e^{f(u,x)}}{\phi(u,x)}\right)\phi(u,x) d\mu(x)\\ &\ge \int_{\mathcal{X}} (f(u,x)-\log \phi(u,x)) \phi d\mu(x).
\end{align*}
On integrating this over $u$ gives
\begin{align*}\mathcal{T}(f) &\geq \int_{\Sigma}  f(u,x) \phi(u,x) d\rho(u,x)-\int_{\Sigma} \phi(u,x) \log \phi(u,x) d\rho(u,x)\\ &=\int_{\Sigma} f(u,x)d\nu(u,x) - D(\nu|\rho).
\end{align*}
The bound LHS $\ge$ RHS in \eqref{eq:suffice1} follows from this, on taking a sup over {\color{black}$\nu$ absolutely continuous with respect to $\rho$. This completes the proof of part (i), on noting that $D(\nu|\rho)=\infty$ if $\nu$ is not absolutely continuous with respect to $\rho$.}

\item[(ii)]

With $\tml_n$ as defined in the theorem, we first show that $\mathfrak{L}_n$ and $\tml_n$ are exponentially equivalent. Indeed, with $d_l(\cdot,\cdot)$ denoting the bounded Lipschitz metric as in Definition \ref{def:lip}, we have
\begin{align*}
d_l(\mathfrak{L}_n,\tml_n) &= \sup_{f \in \text{Lip}(1)}\left|\frac{1}{n}\sum_{i=1}^{n}f\Big(\frac{i}{n},X_i\Big)-\sum_{i=1}^{n}\int_{\frac{i-1}{n}}^{\frac{i}{n}} f(u,X_i)du\right| \\
&\le  \sup_{f \in \text{Lip}(1)}\sum_{i=1}^{n}\int_{\frac{i-1}{n}}^{\frac{i}{n}} \Big|f\Big(\frac{i}{n},X_i\Big)-f(u,X_i)\Big|du \leq \frac{1}{n}.
\end{align*}
Hence, invoking \cite[Theorem 4.2.13]{DZ}, $\tml_n$ and $\mathfrak{L}_n$ have the same LDP. Finally note that $I(\nu)=\infty$ outside $\widetilde{\mathcal{M}}$, and $\widetilde{\mathcal{M}}$ is closed.  The desired conclusion then follows on invoking part (i). 

\end{enumerate}
\end{proof}
%
%
%
%
%
%

\subsection{Proofs of Theorem \ref{thm:main},~\cref{cor:main},~\cref{lem:gibbs},~and~\cref{lem:con_not}}


For proving~\cref{thm:main}~and~\cref{cor:main} we require the following four lemmas, the proofs of which are deferred to~\cref{sec:auxlem}. 

The first lemma gives a bound on $T_{W,\phi}(.)$. This will be used repeatedly throughout the paper.
\begin{lmm}\label{lem:Tgraphon0}

For any $p\ge 1$, $q>1$ such that $\frac{1}{p}+\frac{1}{q}\leq 1$, we have
\begin{align*}
T_{|W|,|\phi|}(\nu)\le \|W\|_{q \Delta}^{|E(H)|} \Big(\E_{(\ntw)^{\otimes v}} |\phi(B_1,\ldots,B_v)|^p\Big)^{\frac{1}{p}}.
\end{align*}
where $\{B_r\}_{1\le r\le v}\stackrel{iid}{\sim}\nu^{(2)}$, and $\nu^{(2)}$ is the second marginal of $\nu$ (see~\eqref{eq:secmar}).
\end{lmm}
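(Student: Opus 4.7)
The plan is to apply H\"older's inequality in the joint expectation defining $T_{|W|,|\phi|}(\nu)$ to split it into a factor depending only on the $B$-coordinates and a factor depending only on the $A$-coordinates, and then to bound the latter via a counting lemma for $L^r$ graphons. Let $p'=p/(p-1)\in[1,\infty]$ be the H\"older conjugate of $p$; the hypothesis $\tfrac{1}{p}+\tfrac{1}{q}\le 1$ is equivalent to $p'\le q$.

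Since $\nu\in\widetilde{\mathcal{M}}$, the coupling $\{(A_a,B_a)\}_{a=1}^v\stackrel{iid}{\sim}\nu$ has the $A_a$'s jointly iid $\mathrm{U}[0,1]$ and the $B_a$'s jointly iid $\ntw$. H\"older's inequality applied to the joint expectation gives
\begin{align*}
T_{|W|,|\phi|}(\nu) = \E\Big[|\phi(B_1,\ldots,B_v)|\prod_{(a,b)\in E(H)}|W(A_a,A_b)|\Big] \le \bigl(\E|\phi(B_1,\ldots,B_v)|^p\bigr)^{1/p}\bigl(\E\prod_{(a,b)\in E(H)}|W(A_a,A_b)|^{p'}\bigr)^{1/p'}.
\end{align*}
The first factor depends only on the $B_a$'s and therefore equals $\bigl(\E_{(\ntw)^{\otimes v}}|\phi(B_1,\ldots,B_v)|^p\bigr)^{1/p}$, matching the target right-hand side exactly.

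For the second factor, the expectation is over $\{A_a\}_{a=1}^v\stackrel{iid}{\sim}\mathrm{U}[0,1]$, so it equals the homomorphism integral of the nonnegative symmetric kernel $|W|^{p'}$ along $H$. Invoking the $L^r$-graphon counting inequality (the paper's \cref{lem:gen_holder}, a version of \cite[Proposition 2.19]{borgs2018p}) with $r=\Delta$ applied to $|W|^{p'}$ yields
\begin{align*}
\int_{[0,1]^v}\prod_{(a,b)\in E(H)}|W(x_a,x_b)|^{p'}\prod_{a=1}^v dx_a \le \bigl\| |W|^{p'}\bigr\|_{\Delta}^{|E(H)|} = \|W\|_{\Delta p'}^{p'|E(H)|}.
\end{align*}
Raising to the $1/p'$ power gives $\|W\|_{\Delta p'}^{|E(H)|}$, and monotonicity of $L^r$-norms on the probability space $[0,1]^2$ combined with $p'\le q$ gives $\|W\|_{\Delta p'}\le \|W\|_{q\Delta}$, completing the bound.

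The endpoint cases $p=1$ (which forces $q=\infty$ and $p'=\infty$) and $p=\infty$ ($p'=1$) require only trivial adjustments: in the first, the pointwise bound $\prod_{(a,b)\in E(H)}|W|\le\|W\|_\infty^{|E(H)|}$ suffices directly; in the second, the counting lemma is applied to $|W|$ itself with exponent $\Delta\le q\Delta$. The only step requiring any external input is the $L^r$-graphon counting inequality with exponent $\Delta$ matching the maximum degree of $H$; once \cref{lem:gen_holder} is invoked, what remains is a single-line H\"older calculation, so I do not anticipate a genuine obstacle here.
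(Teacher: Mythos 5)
Your proof is correct and takes essentially the same approach as the paper's: H\"older's inequality on the joint expectation to separate the $\phi$-factor from the $W$-factor, followed by the $L^r$-graphon counting lemma (\cref{lem:gen_holder} / \cite[Proposition 2.19]{borgs2018p}). The only difference is a mirror-image choice of conjugate exponents --- the paper applies H\"older with $(q,\,q/(q-1))$ and then boosts the $\phi$-factor from exponent $q/(q-1)$ up to $p$, whereas you apply H\"older with $(p,\,p/(p-1))$ and boost the $W$-factor from exponent $\Delta p/(p-1)$ up to $q\Delta$.
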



\begin{defn}\label{def:phim}
For any measurable $\phi:\mathcal{X}^ v\mapsto \R$ and $M>0$, define $\phi_M:=\phi 1\{|\phi|\le M\}$. Let $U_{n,M}({\bf X})$ and $V_{n,M}({\bf X})$ denote the RHS of \eqref{eq:U} and \eqref{eq:v} respectively, where the function $\phi$ is replaced by $\phi_M$. Note that $V_{n,M}({\bf X})=T_{W_{Q_n},\phi_M}(\tilde{\mathfrak{L}}_n)$.
\end{defn}
\noindent {\color{black} Setting
	\begin{equation}\label{eq:ak}
		\mathcal{A}_K:=\{\nu\in \widetilde{M}:D(\nu|\rho)\le K\},
	\end{equation}
 the second lemma allows us to replace $\phi$ by the bounded function $\phi_M$.}
\begin{lmm}\label{lem:phim}

Suppose $\phi:\mathcal{X}^v\mapsto \R$ is a measurable function, such that the pair $(\phi,\mu)$ satisfy \eqref{eq:tailp}. Then we have the following conclusions:

(i) For every $\delta>0$ we have
$$\limsup\limits_{M \rightarrow \infty} \limsup\limits_{n \rightarrow \infty} \frac{1}{n}\log \mathbb{P}\left(|V_n({\bf X})-V_{n,M}({\bf X})| \geq \delta\right) = - \infty,$$
$$\limsup\limits_{M \rightarrow \infty} \limsup\limits_{n \rightarrow \infty} \frac{1}{n}\log \mathbb{P}\left(|U_n({\bf X})-U_{n,M}({\bf X})| \geq \delta\right) = - \infty.$$

(ii) For every $K>0$ we have
$$\lim_{M\to\infty}\sup_{\nu\in\mathcal{A}_K} |T_{W,\phi}(\nu)-T_{W,\phi_M}(\nu)|=0,$$
where $\mathcal{A}_K$ is defined as in~\eqref{eq:ak}.

\end{lmm}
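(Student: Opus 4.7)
The plan is to exploit the pointwise inequality
$$|\phi-\phi_M|(x_1,\ldots,x_v) \le \prod_{a=1}^v \psi(x_a)\,\mathbf{1}\{\max_a \psi(x_a) > M^{1/v}\},$$
which holds because $|\phi|\le \prod_a \psi$ forces $\phi_M=\phi$ whenever $\prod_a \psi(x_a)\le M$. Raising to the $p$-th power and applying the crude union bound yields, with $L := M^{1/v}$,
$$|\phi-\phi_M|^p(x_1,\ldots,x_v) \le \sum_{b=1}^v \prod_{a=1}^v \psi(x_a)^p\,\mathbf{1}\{\psi(x_b)>L\}.$$
This factorized form is the engine of both parts, since it only involves one tail indicator per summand, so that the integrability of $e^{\lambda \psi^p}$ supplied by~\eqref{eq:tailp} suffices throughout (rather than a stronger integrability of $\psi^{p(1+\varepsilon)}$).

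For part (i) I would begin by noting that $V_n=T_{W_{Q_n},\phi}(\tilde{\mathfrak{L}}_n)$ and $V_{n,M}=T_{W_{Q_n},\phi_M}(\tilde{\mathfrak{L}}_n)$, and because replacing the distinct-index sum by the full sum of non-negative terms only enlarges the estimate,
$$|V_n-V_{n,M}|,\ |U_n-U_{n,M}| \le T_{|W_{Q_n}|,|\phi-\phi_M|}(\tilde{\mathfrak{L}}_n).$$
Applying~\cref{lem:Tgraphon0} with the $(p,q)$ of~\eqref{eq:tailp}--\eqref{eq:q}, followed by the pointwise decomposition above, gives the deterministic estimate
$$|V_n-V_{n,M}|^p \le v\,\|W_{Q_n}\|_{q\Delta}^{p|E(H)|}\!\left(\tfrac{1}{n}\sum_{i=1}^n\psi(X_i)^p\right)^{v-1} \cdot \tfrac{1}{n}\sum_{i=1}^n \psi(X_i)^p\mathbf{1}\{\psi(X_i)>L\}.$$
I would split the event $\{|V_n-V_{n,M}|>\delta\}$ according to whether $n^{-1}\sum_i \psi(X_i)^p$ exceeds a large constant $R$, controlling both resulting pieces by Cram\'er-type bounds. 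For the crucial tail piece, exponential Chebyshev yields
$$\tfrac{1}{n}\log \mathbb{P}\!\left(\tfrac{1}{n}\sum_i\psi(X_i)^p\mathbf{1}\{\psi(X_i)>L\}>\delta'\right) \le -\lambda\delta' + \log \mathbb{E}_\mu e^{\lambda\psi(X_1)^p\mathbf{1}\{\psi(X_1)>L\}},$$
and dominated convergence forces the log-MGF to $0$ as $L\to\infty$, using $\mathbb{E}_\mu e^{\lambda\psi^p}<\infty$. Sending first $R$, then $L$, then $\lambda$ to infinity drives each piece to $-\infty$.

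For part (ii), combining~\cref{lem:Tgraphon0} with the same pointwise decomposition, integrated under $(\nu^{(2)})^{\otimes v}$, gives for $\nu\in\mathcal{A}_K$
$$|T_{W,\phi}(\nu)-T_{W,\phi_M}(\nu)|^p \le v\,\|W\|_{q\Delta}^{p|E(H)|}\!\left(\int \psi^p d\nu^{(2)}\right)^{v-1}\!\int \psi^p\mathbf{1}\{\psi>L\}\,d\nu^{(2)}.$$
The Donsker--Varadhan inequality $\int g\,d\nu^{(2)} \le \lambda^{-1}(D(\nu^{(2)}|\mu)+\log\mathbb{E}_\mu e^{\lambda g})$, together with $D(\nu^{(2)}|\mu)\le D(\nu|\rho)\le K$, bounds the first factor uniformly over $\mathcal{A}_K$ by a fixed constant, and bounds the tail integral by $\lambda^{-1}(K+\log\mathbb{E}_\mu e^{\lambda\psi^p\mathbf{1}\{\psi>L\}})$. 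Since $\mathbb{E}_\mu e^{\lambda\psi^p\mathbf{1}\{\psi>L\}}\to 1$ by DCT, the tail integral tends to at most $K/\lambda$, which vanishes as $\lambda\to\infty$, establishing the uniform convergence.

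The main difficulty will be aligning the H\"older exponents with what~\eqref{eq:tailp} provides: a naive bound like $\mathbf{1}\{\prod\psi>M\}\le M^{-\varepsilon}(\prod\psi)^\varepsilon$ would require integrability of $\psi^{p(1+\varepsilon)}$, strictly beyond the hypothesis. The union-bound decomposition above avoids this by isolating exactly one tail indicator per summand, so that the single-variable DCT fact $\mathbb{E}_\mu e^{\lambda \psi^p \mathbf{1}\{\psi>L\}}\to 1$ drives both parts in a unified way.
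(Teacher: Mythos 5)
Your proof takes essentially the same route as the paper's. The pointwise inequality $|\phi-\phi_M|\le \prod_a\psi(x_a)\,\mathbf{1}\{\max_a\psi(x_a)>M^{1/v}\}$ followed by the union bound isolating one tail indicator per summand is exactly the decomposition used in the paper (see the display labeled eq:phi\_m\_bound\_ii), and feeding this into Lemma~\ref{lem:Tgraphon0} gives the same moment-of-$\widetilde{\mathfrak{L}}_n$ estimate. The part (i) Chebyshev/DCT bounds coincide with the paper's displays eq:phi5 and eq:phiii6, and your observation that $|U_n-U_{n,M}|$ is dominated by the full non-negative sum matches the paper's one-line treatment of the $U_n$ case. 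For part (ii), you invoke Donsker--Varadhan directly where the paper routes through Lemma~\ref{lem:basicres}; since that lemma is itself proved via the dual inequality $xy\le e^x+y\log y$, these are the same argument, and your version is arguably a cleaner packaging because it avoids tracking the constant $L(K,\mathfrak{G})$ uniformly over the truncated family $\psi\,\mathbf{1}\{\psi>M^{1/v}\}$.

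One small imprecision: the closing sentence ``sending first $R$, then $L$, then $\lambda$ to infinity'' does not work as a literal nested limit, since sending $R\to\infty$ first drives $\delta'=\delta'(R)\to 0$ and the exponential-Chebyshev bound for the tail piece degenerates before $L$ or $\lambda$ can help. The correct bookkeeping (which you clearly have the ingredients for, and which the paper implicitly uses) is: fix a target rate $T$, pick $R$ large enough so the first piece beats $-T$, with $R$ fixed pick $\lambda$ large enough that $\lambda\delta'(R)>T$, and only then let $L=M^{1/v}\to\infty$ so the DCT kills the residual $\log\E_\mu e^{\lambda\psi^p\mathbf{1}\{\psi>L\}}$. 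Since $M\to\infty$ is the outer limit in the statement, this is the order that must be respected.
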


\noindent The third and fourth lemma analyze the functional $T_{W,\phi}(.)$ for bounded $\phi$.
\begin{lmm}\label{lem:Tcont}
Let $W \in \mathcal{W}$ and $\phi:\mathcal{X}^v\mapsto[-M,M]$ be a measurable bounded function.

(i) Define 
$$
W^*(u_1,\ldots ,u_v):=\prod_{(a,b)\in E(H)} W(u_a,u_b),\qquad \mbox{for} \, (u_1,\ldots ,u_v)\in [0,1]^v.
$$
Suppose $\phi$ is continuous and $\lVert W^*\rVert_1<\infty$. Then the function $\nu \mapsto T_{W,\phi}(\nu)$ is continuous on  $ \widetilde{\mathcal{M}}$.
\\

(ii)  Suppose $\phi$ is measurable, $W\in\mathcal{W}$ satisfies $\|W\|_{q \Delta} < \infty$ for some $q>1$,  
and $\{\phi^{(\ell)}\}_{\ell\ge 1}:\mathcal{X}^v\mapsto [-M,M]$ be such that $\phi^{(\ell)} \stackrel{L^p(\mu^{\otimes v})}{\longrightarrow}\phi.$ 
Then we have
$$\lim_{\ell\to\infty}\sup_{\nu \in \mathcal{A}_K}|T_{W,\phi^{(\ell)}}(\nu)-T_{W,\phi}(\nu)|=0,$$
where $\mathcal{A}_K$ is defined as in~\eqref{eq:ak}.

\end{lmm}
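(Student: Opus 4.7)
My plan is to view $T_{W,\phi}(\nu)$ as $\int F\,d\nu^{\otimes v}$, where $F(a_1,b_1,\ldots,a_v,b_v):=W^*(a_1,\ldots,a_v)\,\phi(b_1,\ldots,b_v)$, and approximate. The integrand $F$ is not continuous (because $W^*$ is only $L^1$), so I cannot directly apply weak convergence. Instead, for each $k\geq 1$, by density of $C([0,1]^v)$ in $L^1([0,1]^v)$, I choose continuous and bounded $W^*_k$ with $\|W^*-W^*_k\|_{L^1([0,1]^v)}\leq 1/k$, and set $F_k(a,b):=W^*_k(a)\phi(b)$, which is continuous and bounded on $\Sigma^v$. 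The crucial observation is that for every $\nu\in\widetilde{\mathcal{M}}$, the first marginal of $\nu^{\otimes v}$ is Lebesgue measure on $[0,1]^v$, so
\[
\Big|\int F\,d\nu^{\otimes v}-\int F_k\,d\nu^{\otimes v}\Big|\leq M\,\|W^*-W^*_k\|_{L^1([0,1]^v)}\leq M/k
\]
uniformly in $\nu\in\widetilde{\mathcal{M}}$. Given $\nu_n\to\nu$ weakly, I then have $\nu_n^{\otimes v}\to\nu^{\otimes v}$ weakly, so $\int F_k\,d\nu_n^{\otimes v}\to\int F_k\,d\nu^{\otimes v}$ for each fixed $k$ by continuity and boundedness of $F_k$. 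A triangle inequality and letting $k\to\infty$ concludes part (i).

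\textbf{Plan for part (ii), reduction.} I would apply~\cref{lem:Tgraphon0} to the signed function $\phi^{(\ell)}-\phi$ to obtain
\[
\sup_{\nu\in\mathcal{A}_K}|T_{W,\phi^{(\ell)}}(\nu)-T_{W,\phi}(\nu)|\leq \|W\|_{q\Delta}^{|E(H)|}\sup_{\nu\in\mathcal{A}_K}\Bigl(\mathbb{E}_{(\nu^{(2)})^{\otimes v}}|\phi^{(\ell)}-\phi|^p\Bigr)^{1/p}.
\]
The main obstacle is that the available hypothesis is $L^p$-convergence under $\mu^{\otimes v}$, whereas the bound is under $(\nu^{(2)})^{\otimes v}$, and $\nu^{(2)}$ varies with $\nu$. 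I would control this uniformly using the fact that $\mathcal{A}_K$ is a sublevel set of the divergence: by the chain rule, since $\nu$ and $\rho$ share the uniform first marginal, $D(\nu^{(2)}|\mu)\leq D(\nu|\rho)\leq K$, and by tensorization $D((\nu^{(2)})^{\otimes v}|\mu^{\otimes v})\leq vK$.

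\textbf{Plan for part (ii), change of measure.} Write $g_\ell:=|\phi^{(\ell)}-\phi|^p$, which satisfies $0\leq g_\ell\leq (2M)^p$. By the Donsker--Varadhan variational formula, for every $\lambda>0$ and every $\nu\in\mathcal{A}_K$,
\[
\int g_\ell\,d(\nu^{(2)})^{\otimes v}\leq \frac{1}{\lambda}\log\int e^{\lambda g_\ell}\,d\mu^{\otimes v}+\frac{vK}{\lambda}.
\]
I now convert $L^p$-smallness of $\phi^{(\ell)}-\phi$ under $\mu^{\otimes v}$ into smallness of the exponential moment via the elementary inequality $e^x\leq 1+xe^x$ for $x\geq 0$, which gives $e^{\lambda g_\ell}\leq 1+\lambda g_\ell e^{\lambda(2M)^p}$, hence $\int e^{\lambda g_\ell}\,d\mu^{\otimes v}\leq 1+\lambda e^{\lambda(2M)^p}\,\|\phi^{(\ell)}-\phi\|_{L^p(\mu^{\otimes v})}^p\to 1$ as $\ell\to\infty$ for each fixed $\lambda$. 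Therefore $\limsup_{\ell\to\infty}\sup_{\nu\in\mathcal{A}_K}\int g_\ell\,d(\nu^{(2)})^{\otimes v}\leq vK/\lambda$, and sending $\lambda\to\infty$ yields the claim. The only real difficulty is the uniformity over $\mathcal{A}_K$, which is precisely what the Donsker--Varadhan step is engineered to handle.
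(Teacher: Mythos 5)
Your part (i) reproduces the paper's argument: approximate $W^*$ by continuous functions $W^*_k$ in $L^1([0,1]^v)$, observe that the $[0,1]^v$-marginal of $\nu^{\otimes v}$ is Lebesgue measure whenever $\nu\in\widetilde{\mathcal{M}}$ (so the approximation error $M\|W^*-W^*_k\|_1$ is uniform in $\nu$), then pass to the weak limit in the continuous approximant and send $k\to\infty$.

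For part (ii), your reduction via \cref{lem:Tgraphon0} matches the paper's, but the uniform control over $\mathcal{A}_K$ is handled by a genuinely different mechanism. The paper invokes \cref{lem:basicres}(i) with $\eta=\phi-\phi^{(\ell)}$ (applied to a function on $\mathcal{X}^v$, even though that lemma is stated for functions on $\mathcal{X}$), which is a change-of-measure estimate built by a three-way truncation of the integrand. You instead make the change of measure explicit: the chain rule gives $D(\nu^{(2)}|\mu)\le D(\nu|\rho)\le K$, tensorization gives $D\big((\nu^{(2)})^{\otimes v}\,\big|\,\mu^{\otimes v}\big)\le vK$, Donsker--Varadhan at a free scale $\lambda$ yields $\int g_\ell\,d(\nu^{(2)})^{\otimes v}\le \frac{1}{\lambda}\log\int e^{\lambda g_\ell}\,d\mu^{\otimes v}+\frac{vK}{\lambda}$ uniformly over $\mathcal{A}_K$, and the elementary bound $e^x\le 1+xe^x$ together with $0\le g_\ell\le(2M)^p$ converts $L^p(\mu^{\otimes v})$-smallness of $\phi^{(\ell)}-\phi$ into $\int e^{\lambda g_\ell}\,d\mu^{\otimes v}\to 1$; taking $\ell\to\infty$ at fixed $\lambda$ and then $\lambda\to\infty$ closes the proof. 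Both approaches ultimately perform an entropy-controlled change of measure from $(\nu^{(2)})^{\otimes v}$ to $\mu^{\otimes v}$, but your route is self-contained and states the tensorization step explicitly, whereas the paper's packages the estimate inside \cref{lem:basicres}, a lemma it also needs elsewhere for functions with a fixed sub-exponential (rather than bounded) envelope, a case your simpler bound does not cover. Your argument is correct.
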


\begin{lmm}\label{lem:Tgraphon}

Suppose $\{Q_n\}_{n\ge 1}$ be a {\color{black}sequence of matrices such that  $d_{\square}(W_{Q_n},W) \rightarrow 0$ for some $W\in \mathcal{W}$}, and \eqref{eq:q} holds for some $q>1$.
Then, for any bounded, continuous function $\phi$ and $\delta>0$, we have \begin{equation}
\lim_{n \rightarrow \infty} \frac{1}{n}\log \P \left(|T_{W_{Q_n},\phi}(\tilde{\mathfrak{L}}_n)-T_{W,\phi}(\tilde{\mathfrak{L}}_n)| \geq \delta \right)= -\infty.
\end{equation}
\end{lmm}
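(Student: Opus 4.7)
The plan is to combine the exponential tightness provided by the Sanov-type LDP for $\tml_n$ (Lemma~\ref{lmm:bivarldp}) with a deterministic uniform bound on $|T_{W_{Q_n},\phi}(\nu) - T_{W,\phi}(\nu)|$ over sublevel sets of $D(\cdot\,|\,\rho)$, the latter obtained via the $L^q$-graphon counting lemma.

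First, by Lemma~\ref{lmm:bivarldp}(ii) the set $\mathcal{A}_K := \{\nu \in \widetilde{\mathcal{M}} : D(\nu\,|\,\rho) \le K\}$ is compact in the weak topology, with $\limsup_{n} \frac{1}{n}\log \mathbb{P}(\tml_n \notin \mathcal{A}_K) \le -K$. Splitting the event in question according to whether $\tml_n \in \mathcal{A}_K$, it would suffice to prove
\[
\sup_{\nu \in \mathcal{A}_K} |T_{W_{Q_n},\phi}(\nu) - T_{W,\phi}(\nu)| \longrightarrow 0 \quad \text{as } n \to \infty,
\]
since then for large $n$ the deviation event on $\{\tml_n \in \mathcal{A}_K\}$ is empty, and sending $K \to \infty$ gives the super-exponential rate $-\infty$.

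Second, I would disintegrate each $\nu \in \widetilde{\mathcal{M}}$ as $\nu(du, dx) = du \cdot K_\nu(u, dx)$ (a regular conditional distribution exists since $\mathcal{X}$ is Polish) and set
\[
\bar\phi_\nu(u_1, \ldots, u_v) := \int_{\mathcal{X}^v} \phi(x_1, \ldots, x_v)\, \prod_{a=1}^v K_\nu(u_a, dx_a).
\]
Since $\phi$ is bounded by $M := \|\phi\|_\infty$, so is $\bar\phi_\nu$, with $\|\bar\phi_\nu\|_\infty \le M$ \emph{uniformly in $\nu$}. Using Fubini and the fact that the first marginal of $\nu$ is $\mathrm{U}[0,1]$,
\[
T_{W_{Q_n},\phi}(\nu) - T_{W,\phi}(\nu) = \int_{[0,1]^v} \bar\phi_\nu(u) \Bigl[ \prod_{(a,b) \in E(H)} W_{Q_n}(u_a, u_b) - \prod_{(a,b) \in E(H)} W(u_a, u_b) \Bigr] du,
\]
so the $\nu$-dependence is fully absorbed into the uniformly bounded outer weight $\bar\phi_\nu$.

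Third, I would telescope $\prod W_{Q_n} - \prod W$ edge by edge, producing $|E(H)|$ terms each containing one factor $W_{Q_n} - W$ and $|E(H)|-1$ factors drawn from $\{W_{Q_n}, W\}$, and apply the generalized H\"older / $L^q$-graphon counting lemma (\cref{lem:gen_holder}, cf.\ \cite[Proposition~2.19]{borgs2018p}) to each. This yields an estimate of the form
\[
|T_{W_{Q_n},\phi}(\nu) - T_{W,\phi}(\nu)| \le C_{H,q}\cdot M \cdot \delta_\square(W_{Q_n}, W)^{\alpha} \cdot \bigl(\|W_{Q_n}\|_{q\Delta} + \|W\|_{q\Delta}\bigr)^{|E(H)| - 1}
\]
for some $\alpha = \alpha(q,\Delta) > 0$; the right-hand side is $\nu$-independent and tends to $0$ by \eqref{eq:cut_con}, \eqref{eq:q} and Remark~\ref{rem:implic}.

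The main obstacle is this step-three application of the counting lemma with the weight $\bar\phi_\nu$, which is an \emph{arbitrary} bounded measurable function on $[0,1]^v$, not a tensor product of single-variable factors (the form in which such estimates are typically stated). One has to integrate out $v-2$ of the $v$ vertex variables against $L^{q\Delta}$-controlled graphons (using generalized H\"older, with $\bar\phi_\nu$ contributing only through $\|\bar\phi_\nu\|_\infty$) so as to reduce to a bilinear form in $(u_a, u_b)$ against $W_{Q_n}-W$, which can then be bounded by the cut norm $\|W_{Q_n}^{\sigma_n} - W\|_\square$ after absorbing the measure-preserving bijection $\sigma_n$ realizing $\delta_\square(W_{Q_n},W) \to 0$ into a reparametrization of the integration variables. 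Since this reparametrization leaves $\|\bar\phi_\nu\|_\infty$ invariant, the resulting bound remains $\nu$-uniform across $\mathcal{A}_K$, which is what closes the argument.
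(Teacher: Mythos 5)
Your plan is broadly sensible through the first two steps, and the reduction to a uniform bound over $\mathcal{A}_K$ (with exponential tightness absorbing the complement) is a legitimate alternative framing. However, the third step has a genuine gap that the sketched workaround does not close.

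The key problem is exactly the one you flag and then try to wave away: after disintegrating, $\bar\phi_\nu(u_1,\ldots,u_v)$ is an arbitrary bounded measurable function on $[0,1]^v$, not a finite sum of tensor products $\prod_a f_a(u_a)$. The counting lemma (\cref{lem:gen_holder}, part (ii)) gives convergence $\sup_{\mathbf f}|t(H,W_n,\mathbf f)-t(H,W,\mathbf f)|\to 0$ only over \emph{tensor-product} weights $\prod_a f_a(x_a)$ with $\|f_a\|_\infty\le1$; the cut norm is precisely the dual of such tests. Your proposal to ``integrate out $v-2$ variables with $\bar\phi_\nu$ contributing only through $\|\bar\phi_\nu\|_\infty$'' cannot work: if you replace $\bar\phi_\nu$ by its sup-norm inside an absolute value you obtain a bound of the form $\|\bar\phi_\nu\|_\infty\int_{[0,1]^v}|\prod W_n-\prod W|$, which involves an $L^1$-type quantity that does \emph{not} go to $0$ under cut-metric convergence (take $W_n-W=(-1)^{\lfloor nu_1\rfloor+\lfloor nu_2\rfloor}$ with $H=K_2$). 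If instead you do not take absolute values, the residual bilinear kernel $h(u_a,u_b)$ after integrating out the other variables is still a genuinely bivariate bounded function depending on $\bar\phi_\nu$, and $\delta_\square(W_n,W)\to 0$ does not control $\int (W_n-W)(u_a,u_b)h(u_a,u_b)\,du_a\,du_b$ for such $h$.

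What the paper does (and what your argument is missing) is to first fix a compact $\mathcal{K}\subseteq\mathcal{X}$ with $\mu(\mathcal{K}^c)\le\varepsilon$ and apply Stone--Weierstrass on $\mathcal{K}^v$ to replace $\phi$ by a finite sum $\widetilde\phi=\sum_i\alpha_i\prod_a\mathbbm{1}_{R_{ia}}$ of tensor-product indicators (up to $\delta/(4C)$ uniformly on $\mathcal{K}^v$). After disintegration, $T_{W_n,\widetilde\phi}(\nu)=\sum_i\alpha_i\, t(H,W_n,\mathbf f_i)$ with $f_{ia,\nu}(u)=\P_\nu(V\in R_{ia}\mid U=u)\in[0,1]$, and \emph{then} \cref{lem:gen_holder}(ii) applies and gives a $\nu$-uniform bound. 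The error introduced by the truncation to $\mathcal{K}^v$ is then controlled probabilistically via the empirical fraction of $X_i\notin\mathcal{K}$ and a binomial concentration estimate (the paper works on $\widetilde{\mathfrak{L}}_n$ directly rather than uniformly over $\mathcal{A}_K$, though your compactness approach could be adapted using \cref{lem:basicres}(i) for the $\mathcal{K}^c$-mass). Without the Stone--Weierstrass tensorization step your argument does not go through.
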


The following lemma allows us to get moment bounds under $\ntw$ using moment bounds under $\mu$. 
\begin{lmm}\label{lem:basicres}
	Assume that~\eqref{eq:tailp} holds. Then we have the following:
	\begin{enumerate}
		\item[(i)]
		For every positive number $K$ and function $\mathfrak{G}:(0,\infty)\mapsto(0,\infty)$, there exists a finite positive number $L$ (depending only on $K,\mathfrak{G}$) with the following property:
			
			For any measurable $\eta:\mathcal{X}\mapsto \mathbb{R}$ such that  $\mathbb{E}_{\mu}(e^{\lambda|\eta|^p})\le \mathfrak{G}(\lambda)$ for all $\lambda >0$, 
			we have 
			$$\sup_{\nu \in \mathcal{A}_K} \mathbb{E}_{\ntw}(|\eta|^p)\le L \mathbb{E}_{\rho}(|\eta|^p)$$
			
		\item[(ii)] Assume $\lVert  W\rVert_{q \Delta}<\infty$ (where $\Delta$ is the maximum degree of the graph $H$ as before) for some $q>1$ such that $\frac{1}{p}+\frac{1}{q}\leq 1$. Then $T_{W,\phi}(\cdot)$ is well-defined on the set $D(\cdot|\rho)<\infty$. Further, we have:
		\begin{equation}\label{eq:welldef}\sup_{\nu\in\mathcal{A}_K} |T_{W,\phi}(\nu)|\leq C_2<\infty,\end{equation}
where $C_2>0$ be a constant depending on $K,\mu,\psi, \|W\|_{q \Delta}$.
	\end{enumerate}
\end{lmm}

		The next lemma will be used to show that $U_n({\bf X})$ and $V_n({\bf X})$ have the same LDP.
		
		\begin{lmm}\label{lmm:un_vn_same}

Suppose $Q_n$ satisfies~\eqref{eq:q} for some $q>1$. Let $\phi:\mathcal{X}^v\to [-L,L]$ for some $L>0$. Then,
\begin{align*}
   \lim\limits_{n \rightarrow \infty} \frac{1}{n^v}\sup\limits_{(x_1,\ldots ,x_n) \in \mathcal{X}^n}& \bigg\lvert \sum_{(i_1,\ldots,i_v)\in \mathcal{S}(n,v) } \left(\prod_{(a,b)\in E(H)}Q_n(i_a,i_b)\right)\phi(x_{i_1},\ldots ,x_{i_v})-\\ &\sum_{(i_1,\ldots,i_v)\in [n]^v}\left(\prod_{(a,b)\in E(H)}Q_n(i_a,i_b)\right)\phi(x_{i_1},\ldots ,x_{i_v})\bigg\rvert= 0,
\end{align*}

\end{lmm}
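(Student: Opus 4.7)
Since $|\phi|\le L$, the quantity in the lemma is bounded by $L B_n/n^v$, where
\[
B_n := \sum_{(i_1,\ldots,i_v)\in [n]^v\setminus \mathcal{S}(n,v)} \Bigg| \prod_{(a,b)\in E(H)} Q_n(i_a,i_b)\Bigg|,
\]
so it suffices to prove $B_n=o(n^v)$. My plan is a truncation argument at a threshold $M$ that will eventually be sent to infinity.

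For fixed $M>0$, I would split $B_n=B_n^{\le M}+R_n^{>M}$ according to whether every factor $|Q_n(i_a,i_b)|$ appearing in the product is at most $M$ (bounded case) or at least one factor strictly exceeds $M$ (tail case). The number of tuples in $[n]^v\setminus\mathcal{S}(n,v)$ is at most $\binom{v}{2}n^{v-1}$, so the bounded part satisfies
\[
\frac{B_n^{\le M}}{n^v}\le \binom{v}{2}\frac{M^{|E(H)|}}{n}\to 0 \quad\text{as } n\to\infty
\]
for every fixed $M$.

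For the tail $R_n^{>M}$, I would apply a union bound over the distinguished edge $e^*=(a^*,b^*)\in E(H)$ for which $|Q_n(i_{a^*},i_{b^*})|>M$, and then enlarge the sum to all of $[n]^v$. Rewriting the resulting counting sum as an integral of $|W_{Q_n}|$ on $[0,1]^v$ gives
\[
\frac{R_n^{>M}}{n^v}\le \sum_{e^*\in E(H)}\int_{[0,1]^v} 1\{|W_{Q_n}(x_{a^*},x_{b^*})|>M\}\prod_{e\in E(H)} |W_{Q_n}(x_a,x_b)|\,dx.
\]
I would then apply the generalized H\"older (Finner) inequality with uniform exponent $\Delta$ at every edge --- admissible since $\sum_{e\ni v} 1/\Delta \le \deg_H(v)/\Delta\le 1$ --- to bound each summand by $\|W_{Q_n}\|_\Delta^{|E(H)|-1}\cdot \|W_{Q_n}\cdot 1\{|W_{Q_n}|>M\}\|_\Delta$. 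The elementary interpolation inequality
\[
\int 1\{|W|>M\}|W|^\Delta\le M^{-(q-1)\Delta}\int |W|^{q\Delta}
\]
(valid because $q>1$), combined with $\|W_{Q_n}\|_{q\Delta}\le C$ (hypothesis~\eqref{eq:q}) and $\|W_{Q_n}\|_\Delta\le \|W_{Q_n}\|_{q\Delta}$, then yields
\[
\frac{R_n^{>M}}{n^v}\le C'\,M^{-(q-1)},
\]
uniformly in $n$, for a constant $C'$ depending only on $H$, $q$, and $\limsup_n\|W_{Q_n}\|_{q\Delta}$. Letting first $n\to\infty$ and then $M\to\infty$ gives $B_n/n^v\to 0$. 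The main conceptual ingredient is this last step: converting $L^{q\Delta}$ control with $q>1$ into a quantitative tail decay in $M$ via the interpolation bound. This is precisely where the hypothesis $q>1$ is used, and is the reason the borderline case $q=1$ is excluded.
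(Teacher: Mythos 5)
Your proposal is correct. The overall strategy is the same as the paper's (truncate at a level $M$, use the counting lemma / Finner inequality to control the tail uniformly in $n$ via the gap $q>1$, then send $M\to\infty$), but the implementation is noticeably different and in fact cleaner. You bound the sum $B_n$ over the repeated-index tuples directly and split it by whether any factor exceeds $M$, so the bounded part is handled by crude counting and the tail part by a union bound over the offending edge followed by Finner at uniform exponent $\Delta$, finishing with the interpolation bound $\int |W|^\Delta \mathbbm{1}\{|W|>M\} \le M^{-(q-1)\Delta}\int|W|^{q\Delta}$. The paper instead truncates the matrix entrywise, $Q_{n,M}(i,j):=Q_n(i,j)\mathbbm{1}\{|Q_n(i,j)|\le M\}$, uses a telescoping expansion of $\prod_e Q_n - \prod_e Q_{n,M}$, and controls it via an auxiliary matrix $Q^*_{n,\delta}=1\vee|Q_n|^{1+\delta}$ (with $\delta$ chosen so that $\bar q(1+\delta)=q$ for some $\bar q\in(1,q)$); this makes the $q>1$ room appear through the identity $\|W_{Q^*_{n,\delta}}\|_{\bar q\Delta}=\|W_{1\vee Q_n}\|_{q\Delta}^{1+\delta}$. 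Both arguments are valid; yours avoids the telescoping step and the auxiliary exponent $\delta$, and the interpolation step makes the use of $q>1$ more transparent. One tiny caveat worth stating explicitly in a write-up: the replacement of the restricted sum over $[n]^v\setminus\mathcal{S}(n,v)$ by the full sum over $[n]^v$ in the tail term is a genuine relaxation (all summands are nonnegative once you take absolute values), and the passage from the discrete sum over $[n]^v$ to the integral of $|W_{Q_n}|$ over $[0,1]^v$ is exact because $W_{Q_n}$ is the block step function of $Q_n$.
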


\begin{proof}[Proof of Theorem \ref{thm:main}]
By definition of the weak-cut convergence (see~\cref{def:defirst}), there exists a sequence of permutations $\{\pi_n\}_{n\ge 1}$ with $\pi_n\in S_n$ such that $$d_\square(W_{Q_n^{\pi_n}}, W) \rightarrow 0,\text{ where }Q_n^{\pi_n}(i,j):=Q_n(\pi_n(i),\pi_n(j)).$$
Then setting $Y_i=X_{\pi_n(i)}$ we have
\begin{align*}
    U_n({\bf X})=&\frac{1}{n^v}\sum_{(i_1,\ldots,i_v)\in \mathcal{S}(n,v)}\left(\prod_{(a,b)\in E(H)}Q_n(i_a,i_b)\right)\phi(X_{i_1},\ldots,X_{i_v})\\
    =&\frac{1}{n^v}\sum_{(i_1,\ldots,i_v)\in \mathcal{S}(n,v)}\left(\prod_{(a,b)\in E(H)}Q_n(\pi_n(i_a),\pi_n(i_b))\right)\phi\Big(X_{\pi_n(i_1)},\ldots,X_{\pi_n(i_v)}\Big)\\
    =&\frac{1}{n^v}\sum_{(i_1,\ldots,i_v)\in \mathcal{S}(n,v)}\left(\prod_{(a,b)\in E(H)}Q_n(\pi_n(i_a),\pi_n(i_b))\right)\phi\Big(Y_{i_1},\ldots,Y_{i_v}\Big)\\
    \stackrel{D}{=}&\frac{1}{n^v}\sum_{(i_1,\ldots,i_v)\in \mathcal{S}(n,v)}\left(\prod_{(a,b)\in E(H)}Q_n(\pi_n(i_a),\pi_n(i_b))\right)\phi\Big(X_{i_1},\ldots,X_{i_v}\Big),
\end{align*}
where the last equality uses the fact that $(Y_1,\ldots,Y_n)\stackrel{D}{=}(X_1,\ldots,X_n).$ Since $d_\square(W_{Q_n^{\pi_n}},W)\to 0$, by replacing $W_{Q_n}$ by $W_{Q_n^{\pi_n}}$ without loss of generality we can assume $d_\square(W_{Q_n},W)\to 0$, which we do throughout the rest of this proof.

Next, we show the exponential equivalence between $U_n(\bf X)$ and $V_n({\bf X})$. Towards this direction, fix $M>0$, and let $\phi_M(\cdot), U_{n,M}({\bf X}), V_{n,M}({\bf X})$ be as in~\cref{def:phim}. Then it suffices to show the following, for any $\delta>0$:
\begin{equation}\label{eq:expeq1}
\lim\limits_{M\to\infty}\limsup\limits_{n\to\infty}\frac{1}{n}\log{\P\left(|V_n({\bf X})-V_{n,M}({\bf X})|\geq\delta\right)}=-\infty,
\end{equation}
\begin{equation}\label{eq:expeq2}
\lim\limits_{M\to\infty}\limsup\limits_{n\to\infty}\frac{1}{n}\log{\P\left(|U_n({\bf X})-U_{n,M}({\bf X})|\geq\delta\right)}=-\infty,
\end{equation}
\begin{equation}\label{eq:expeq3}
\lim\limits_{n\to\infty}\frac{1}{n}\log{\P\left(|U_{n,M}({\bf X})-V_{n,M}({\bf X})|\geq\delta\right)}=-\infty.
\end{equation} Here~\eqref{eq:expeq1}~and~\eqref{eq:expeq2} are direct consequences of~\cref{lem:phim} part (i). Finally~\eqref{eq:expeq3} follows directly from~\cref{lmm:un_vn_same} part (i). This establishes exponential equivalence between $U_n({\bf X})$ and $V_n({\bf X})$, and so by~\cite[Theorem 4.2.13]{DZ} it suffices to derive the LDP of $V_n({\bf X})=T_{W_{Q_n},\phi}(\tilde{\mathfrak{L}}_n)$. In the remainder of the proof, we will use the notation $T_{W_{Q_n},\phi}(\tilde{\mathfrak{L}}_n)$ and $T_{W_{Q_n},\phi_M}(\tilde{\mathfrak{L}}_n)$ instead of $V_n({\bf X})$ and $V_{n,M}({\bf X})$ respectively.
With $\phi_M$ as in Definition \ref{def:phim}, use standard measure theory arguments to get a sequence of continuous functions $\{\phi^{(\ell)}_{M}\}_{\ell\ge 1}$ such that $$|\phi^{(\ell)}_{M}|\le M,\text{ and } \phi^{(\ell)}_{M}\stackrel{L^p(\mu^{\otimes v})}{\longrightarrow}\phi_M.$$ 
Then we claim that
\begin{equation}\label{eq:phi_equivalence2}
\limsup\limits_{\ell \rightarrow \infty} \limsup\limits_{n \rightarrow \infty} \frac{1}{n}\log \mathbb{P}\left(|T_{W_{Q_n},{\phi}_M}(\tilde{\mathfrak{L}}_n)-T_{W_{Q_n},\phi^{(\ell)}_{M}}(\tilde{\mathfrak{L}}_n)| \geq \delta\right) = - \infty.
\end{equation}
We first complete the proof of the theorem, deferring the proof of 
\eqref{eq:phi_equivalence2}.
{\color{black} To this effect, triangle inequality gives 
{\small \begin{align*}
    \Big|T_{W,\phi^{(\ell)}_{M}}(\tilde{\mathfrak{L}}_n)- T_{W_{Q_n},\phi}(\tilde{\mathfrak{L}}_n)\Big| &\le |T_{W,\phi^{(\ell)}_{M}}(\tilde{\mathfrak{L}}_n)-T_{W_{Q_n},\phi^{(\ell)}_{M}}(\tilde{\mathfrak{L}}_n)|\\
    &+  |T_{W_{Q_n},{\phi}_M}(\tilde{\mathfrak{L}}_n)-T_{W_{Q_n},\phi^{(\ell)}_{M}}(\tilde{\mathfrak{L}}_n)|+ \Big|T_{W_{Q_n},\phi}(\tilde{\mathfrak{L}}_n)-T_{W_{Q_n},{\phi}_M}(\tilde{\mathfrak{L}}_n)\Big| 
    \\
    &=: \tau^{(1)}_{n,\ell,M}+\tau^{(2)}_{n,\ell,M}+ \tau^{(3)}_{n,M}
\end{align*}}
Consequently, for any $\delta >0$, union bound gives
\begin{align*}
   \P (\Big|T_{W,\phi^{(\ell)}_{M}}(\tilde{\mathfrak{L}}_n)- T_{W_{Q_n},\phi}(\tilde{\mathfrak{L}}_n)\Big|> 3\delta) \le 3 \max \{ \P(\tau^{(1)}_{n,\ell,M} >\delta),  \P(\tau^{(2)}_{n,\ell,M}>\delta),  \P(\tau^{(3)}_{n,M}>\delta)\}.
\end{align*}
Using Lemma \ref{lem:Tgraphon},  \eqref{eq:phi_equivalence2} and Lemma \ref{lem:phim} part (i), respectively, the above display gives, for every $\delta>0$,
$$\limsup_{M\to\infty}\limsup_{\ell\to\infty}\limsup_{n\to\infty}\frac{1}{n}\log \P\left(\Big|T_{W_{Q_n},\phi}(\tilde{\mathfrak{L}}_n)-T_{W,\phi^{(\ell)}_{M}}(\tilde{\mathfrak{L}}_n)\Big|>3\delta\right)=-\infty,$$}
i.e.~the random variables $T_{W,\phi^{(\ell)}_{M}}(\tilde{\mathfrak{L}}_n)$ are exponentially good approximations of $T_{W_{Q_n},\phi}(\tilde{\mathfrak{L}}_n)$ (in the iterated limit as $\ell\to\infty$ followed by $M\to\infty$). Next, invoking~\cref{lem:Tgraphon0}, with $\phi\equiv 1$, we get: \begin{equation}\label{eq:l1w}
\lVert W^*\rVert_1\leq \lVert W\rVert_{q\Delta}^{|E(H)|}<\infty,
\end{equation}
where the last inequality uses~\eqref{eq:W_q}. Consequently~\cref{lem:Tcont}, part (i) implies that the map $T_{W,\phi^{(\ell)}_{M}}(.)$ is continuous with respect to weak topology on $\widetilde{\mathcal M}$.
Also invoking~\cref{lem:phim} part (ii) and~\cref{lem:Tcont} part (ii), we have
\begin{align}\label{eq:later}\limsup_{M\to\infty}\limsup_{\ell\to\infty}\sup_{\nu\in \mathcal{A}_K}|T_{W,\phi^{(\ell)}_{M}}(\nu)-T_{W,\phi}(\nu)|=0,
\end{align}
and so \cite[Eq (4.2.24)]{DZ} holds. Since by Lemma \ref{lmm:bivarldp}, $\tilde{\mathfrak{L}}_n$ satisfies a LDP with the good rate function $D(\cdot|\rho)$ on $\widetilde{M}$, it follows on invoking \cite[Theorem 4.2.23]{DZ} that $T_{W,\phi}(\tilde{\mathfrak{L}}_n)$ satisfies a LDP with good rate function $I_0$. For invoking \cite[Theorem 4.2.23]{DZ} we need the function $\nu\mapsto T_{W,\phi}(\nu)$ to be well defined and finite on the set $\{\nu\in \widetilde{\cM}:D(\nu|\rho)<\infty\}$. But this follows from~\cref{lem:basicres} part (ii).
\\

To complete the proof, it remains to show  \eqref{eq:phi_equivalence2}.

			\noindent To this effect, setting $\widetilde{\phi}_{\ell,M}:=\phi_M-\phi^{(\ell)}_{M}$ and using Lemma \ref{lem:Tgraphon0} gives
			\begin{align*}
				|T_{W_{Q_n},\phi_{M}}(\tilde{\mathfrak{L}}_n)-T_{W_{Q_n},\phi^{(\ell)}_{M}}(\tilde{\mathfrak{L}}_n)|&=|T_{W_{Q_n},\widetilde{\phi}_{\ell,M}}(\widetilde{\mathcal{L}}_n)|\\
				&\le \|W_{Q_n}\|_{q\Delta}^{|E(H)|} \Big(\E_{\tilde{\mathfrak{L}}_n} |\widetilde{\phi}_{\ell,M}(B_1,\ldots,B_v)|^p\Big)^{\frac{1}{p}},
			\end{align*}
			where the first term is bounded in $n$, by \eqref{eq:q}. For showing \eqref{eq:phi_equivalence2} it thus suffices to show that for every $\delta>0$ we have
			\begin{align}\label{eq:phi6}
				\limsup_{\ell\to\infty}\limsup_{n\to\infty}\frac{1}{n}\log \P(|\E_{\tilde{\mathfrak{L}}_n} |\widetilde{\phi}_{\ell,M}(B_1,\ldots,B_v)|^p|>\delta)=-\infty.
			\end{align}
			To this effect, we begin with the following claim, whose proof we defer.
			\\

			There exists a sequence of positive reals $\{\sigma_\ell\}_{\ell\ge 1}$ converging to $0$, such that 
			\begin{equation}\label{eq:claimsubG}
				\limsup_{\ell\to\infty} \E\left[\exp\left(\frac{\widetilde{\phi}_{\ell,M}^2(X_1,\ldots ,X_v)}{8\sigma_{\ell}^2}\right)\right]\leq 2.
			\end{equation}
			In other words,~\eqref{eq:claimsubG} shows that the sub-Gaussian norm of $\phi_{\ell,M}(X_1,\ldots ,X_v)$ converges to $0$ as $\ell\to\infty$. Using~\cite[Lemma 3.1]{eichelsbacher1995large}, we get the bound
			$$\P\bigg(\big|\E_{\tilde{\mathfrak{L}}_n} |\widetilde{\phi}_{\ell,M}(B_1,\ldots,B_v)|^p-\E_{\mu^{\otimes v}}\big|\widetilde{\phi}_{\ell,M}^p(X_1,\ldots ,X_v)\big|\big|>\delta\bigg)\leq \exp\left(-\frac{n\delta^2}{2v\sigma_{\ell}^2}\right).$$
			Further, an application of the dominated convergence theorem yields $$\lim_{\ell\to\infty}\E_{\mu^{\otimes v}}\big|\widetilde{\phi}_{\ell,M}^p(X_1,\ldots ,X_v)\big| = 0.$$ Combining this observation with the above display,  completes the proof of~\eqref{eq:phi6}~and consequently also verifies~\eqref{eq:phi_equivalence2}.
			
			
			It thus remains to verify \eqref{eq:claimsubG}. To this effect, for any $\sigma>0$, the random variable $\widetilde{\phi}_{\ell,M}^2(X_1,\ldots ,X_v)/\sigma^2\overset{\mathbb{P}}{\longrightarrow}0$ as $\ell\to\infty$. By the dominated convergence theorem, we then have $$\lim_{\ell\to\infty}\E_{\mu^{\otimes v}}\left[\exp\left(\frac{\widetilde{\phi}_{\ell,M}^2(X_1,\ldots ,X_v)}{8\sigma^2}\right)\right]=1.$$ Since this holds for every $\sigma>0$, it follows that there exists a sequence $\{\sigma_\ell\}_{\ell\ge 1}$ of positive reals converging to $0$, such that $$\lim_{\ell\to\infty}\E_{\mu^{\otimes v}}\left[\exp\left(\frac{\widetilde{\phi}_{\ell,M}^2(X_1,\ldots ,X_v)}{8\sigma_{\ell}^2}\right)\right]=1.$$
			This verifies \eqref{eq:claimsubG}, and hence completes the proof of the theorem.
			\\

		\end{proof}
		
		\noindent  The following lemma gives the requisite modification of Lemma \ref{lem:Tcont} part (ii) and~\cref{lem:Tgraphon} for proving~\cref{cor:main}.

\begin{lmm}\label{lem:quadratic}
Suppose $\mathcal{X}$ is a compact metric space,  $\phi(\cdot)$ is continuous, and $d_{\square}(W_{Q_n},W)\to 0$. Suppose further that either  the assumptions in~\cref{cor:main}, part (i), or part (ii), or part (iii) holds. Then $T_{W,\phi}(\cdot)$ is well-defined and finite, and we have the following conclusions:

(i) $\sup_{\nu\in \widetilde{\mathcal{M}}}|T_{W_{Q_n},\phi}(\nu)-T_{W,\phi}(\nu)|\to 0.$
\\

(ii) The map $\nu\mapsto T_{W,\phi}(\nu)$ is continuous in $\widetilde{\mathcal{M}}$.
\end{lmm}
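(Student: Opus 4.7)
Since $\mathcal{X}^v$ is compact and $\phi$ is continuous, Stone--Weierstrass gives a uniform approximation of $\phi$ by finite linear combinations $\sum_k c_k \prod_{a=1}^v \phi_{k,a}(x_a)$ of tensor products of continuous functions on $\mathcal{X}$. Because both conclusions are linear in $\phi$ with uniform control depending only on $\|\phi\|_\infty$ and integrability norms of $W$ (respectively $W_{Q_n}$), it suffices to treat the separable case $\phi(x_1,\ldots,x_v)=\prod_{a=1}^v \phi_a(x_a)$. Disintegrating against the uniform first marginal (common to every $\nu\in\widetilde{\mathcal{M}}$), we obtain
\[
T_{W,\phi}(\nu)=\int_{[0,1]^v}\left(\prod_{a=1}^v f_a^{\nu}(A_a)\right) W^{*}(A_1,\ldots,A_v)\,dA_1\cdots dA_v,
\]
where $W^*(A):=\prod_{(a,b)\in E(H)}W(A_a,A_b)$ and $f_a^{\nu}(A):=\int_{\mathcal{X}}\phi_a(B)\,\nu(dB\mid A)$ satisfies $\|f_a^\nu\|_\infty\le\|\phi_a\|_\infty$. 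A leaf-peeling computation bounds $\|W^*\|_1$ in each case: it equals $\E[\mar_W(U)]$ in case~(i), $\E[\mar_W^{v-1}(U)]$ in case~(ii), and is at most $\|\mar_W\|_\infty^{|E(H)|}$ in case~(iii), with uniform-in-$n$ analogues for $\|W_{Q_n}^*\|_1$. This proves that $T_{W,\phi}$ is well-defined and finite on $\widetilde{\mathcal{M}}$.

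\textbf{Part (i).} In case (i) the difference $T_{W_{Q_n},\phi}(\nu)-T_{W,\phi}(\nu)$ is a bilinear form in the bounded $f_1^{\nu},f_2^{\nu}$ against $W_{Q_n}-W$, so the standard estimate $\sup_{\|f\|_\infty,\|g\|_\infty\le 1}|\int f(x)g(y)(W_{Q_n}-W)(x,y)\,dx\,dy|\le 4\,d_\square(W_{Q_n},W)$ yields the uniform convergence. In case~(ii), conditioning on the center $A_1$ gives $T_{W,\phi}(\nu)=\int f_1^\nu(A_1)\prod_{a\ge 2}h_a^\nu(A_1;W)\,dA_1$ with $h_a^\nu(A_1;W):=\int f_a^\nu(A_a)W(A_1,A_a)\,dA_a$; telescoping the product over $a\ge 2$ yields a sum in which each term contains exactly one factor $h_a^\nu(\cdot;W_{Q_n}-W)$ and at most $v-2$ factors of $h_b^\nu(\cdot;W_{Q_n})$ or $h_b^\nu(\cdot;W)$. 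Truncating $A_1$ to $\{\mar_{W_{Q_n}}(A_1)\le K\}$, each term on the bounded set is a bilinear form in $(W_{Q_n}-W)$ with weights bounded by $\|\phi\|_\infty^{v}K^{v-2}$ and thus vanishes by cut-norm convergence, while on the complement the uniform integrability of $\mar_{W_{Q_n}}^{v-1}(U)$ makes the contribution $o_K(1)$ uniformly in $n,\nu$. Case (iii) proceeds analogously by rooting $H$ and peeling leaves iteratively, with the uniform $L^\infty$ bound on $\mar_{W_{Q_n}}$ obviating the truncation step.

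\textbf{Part (ii).} For $\nu_m\to\nu$ weakly in $\widetilde{\mathcal{M}}$, write $T_{W,\phi}(\nu)=\int\Phi_{\nu}(A)W^*(A)\,dA$ with $\Phi_{\nu}(A):=\int\phi(B)\prod_a\nu(dB_a\mid A_a)$ uniformly bounded by $\|\phi\|_\infty$. Since $W^*\in L^1([0,1]^v)$, choose bounded continuous $W^{*,(k)}\to W^*$ in $L^1$. For each fixed $k$, the function $(x_1,\ldots,x_v)\mapsto\phi(B_1,\ldots,B_v)W^{*,(k)}(A_1,\ldots,A_v)$ is bounded continuous on $\Sigma^v$, so weak convergence $\nu_m^{\otimes v}\to\nu^{\otimes v}$ (which follows from $\nu_m\to\nu$) gives $\int\Phi_{\nu_m}W^{*,(k)}\,dA\to\int\Phi_\nu W^{*,(k)}\,dA$, while the tail error is uniformly bounded by $\|\phi\|_\infty\|W^*-W^{*,(k)}\|_1$. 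A $3\epsilon$-argument concludes. The principal technical obstacle lies in Part~(i), case~(ii), where cut-norm convergence (naturally coupling with $L^\infty$ weights) must be bridged to the $L^{v-1}$-type marginal integrability of $W_{Q_n}$ through the truncation argument above; cases (i), (iii) and Part~(ii) are comparatively routine once the reduction to tensor $\phi$ is in place.
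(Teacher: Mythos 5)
Your proposal follows essentially the same strategy as the paper: uniformly approximate $\phi$ by finite sums of separable (tensor) functions, disintegrate each $\nu\in\widetilde{\mathcal{M}}$ against its conditional laws to reduce $T_{W,\phi}(\nu)$ to an integral of $W^*$ against bounded weight functions $f_a^\nu$ on $[0,1]$, and in each of the three regimes bound the resulting difference of homomorphism-type integrals by $d_\square(W_{Q_n},W)$ times a constant, using a bilinear cut-norm estimate for $K_2$, a star-decomposition with truncation and uniform integrability for $K_{1,v-1}$, and a leaf-peeling induction for general trees; Part (ii) then follows by $L^1$-approximating $W^*$ by continuous kernels, which is precisely the paper's Lemma~\ref{lem:Tcont}(i).

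There is, however, one concrete imprecision in your case (ii). You truncate only on the event $\{\mar_{W_{Q_n}}(A_1)\le K\}$ and then assert that each telescoping term has weight bounded by $\|\phi\|_\infty^v K^{v-2}$ on that set. This is not quite right: a generic telescoping term contains factors $h_b^\nu(\,\cdot\,;W)$, which are only bounded by $\|\phi\|_\infty\,\mar_W(A_1)$, and $\mar_W(A_1)$ is not controlled on the set you truncate to, so the weight is not bounded there. Relatedly, your complement estimate invokes only uniform integrability of $\mar_{W_{Q_n}}^{v-1}(U)$, but the mixed products $\mar_{W_{Q_n}}^{v-a}(A_1)\,\mar_W^{a-2}(A_1)$ also require the integrability of $\mar_W^{v-1}(U)$, which is the second hypothesis in \eqref{eq:pp1}. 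The paper handles this by truncating on $\{\mar_{W_{Q_n}}(A_1)\vee\mar_W(A_1)\le L\}$ and then applying H\"older's inequality with exponents $\tp=(v-1)/(v-a+1)$ and $\tq=(v-1)/(a-2)$ to separate the two marginal functions on the complement. Your argument goes through once you also truncate on $\mar_W$ and invoke the integrability of $\mar_W^{v-1}(U)$; as written, the step is incomplete.
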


		\begin{proof}[Proof of~\cref{cor:main}]
		 We will prove all the three parts of~\cref{cor:main} simultaneously by leveraging~\cref{lem:quadratic}. As in the proof of~\cref{thm:main}, it suffices to work with $V_n({\bf X})$ and under the stronger assumption $d_{\square}(W_{Q_n},W)\to 0$. By Lemma \ref{lem:quadratic} part (i), the random variables $V_n({\bf X})=T_{W_{Q_n},\phi}(\widetilde{\mathcal{L}}_n)$ and $T_{W,\phi}(\widetilde{\mathcal{L}}_n)$ are exponentially equivalent, so it suffices to show the LDP for $T_{W,\phi}(\widetilde{\mathcal{L}}_n)$. But this follows from the contraction principle (\cite[Theorem 4.2.1]{DZ}) along with  \cref{lmm:bivarldp}, as the map $T_{W,\phi}(.)$ is continuous by part (ii) of Lemma \ref{lem:quadratic}.
		\end{proof}

		The final prerequisite for the proofs of this section is the following lemma which will be very useful in proving that $I_{\theta}(\cdot)$ (see~\eqref{eq:maingrf}) is a good rate function.
		
		\begin{lmm}\label{lem:contain}
			Suppose $\lVert W\rVert_{q \Delta}<\infty$ for some $q>1$, and assume~\eqref{eq:tailp} holds with $p\ge v$, such that $\frac{1}{p}+\frac{1}{q}\leq 1$. Then for $\theta,\alpha\in \R$ there exists $K>0$ such that
			\begin{equation}\label{eq:astar}
				\mathcal{A}^*_{\alpha}:=
				\{\nu: D(\nu|\rho)-\theta T_{W,\phi}(\nu)\leq \alpha\}\subseteq \{\nu:D(\nu|\rho)\le K\}=\mathcal{A}_{K}.
			\end{equation}
		\end{lmm}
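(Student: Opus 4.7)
The plan is to show that on $\mathcal{A}^*_\alpha$ the functional $|T_{W,\phi}(\nu)|$ grows at most linearly in $D(\nu|\rho)$ with an arbitrarily small leading coefficient, plus an additive constant. Once this is in hand, rearranging the defining inequality $D(\nu|\rho)-\theta T_{W,\phi}(\nu)\le \alpha$ absorbs the linear term into the left-hand side and yields a uniform bound $K$ on $D(\nu|\rho)$. The condition $p\ge v$ is what makes the linear-in-entropy control possible; if $p<v$, the $L^p$-norm of $\phi$ would enter with an exponent exceeding $1$, and no choice of the parameter $\lambda$ below could tame the growth.

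To carry this out, I would first apply~\cref{lem:Tgraphon0} (which is applicable since $\tfrac1p+\tfrac1q\le 1$ and $\|W\|_{q\Delta}<\infty$) together with the majorization $|\phi(x_1,\dots,x_v)|\le\prod_{a=1}^v\psi(x_a)$ from \eqref{eq:tailp} to obtain
\begin{equation*}
|T_{W,\phi}(\nu)|\le T_{|W|,|\phi|}(\nu)\le \|W\|_{q\Delta}^{|E(H)|}\Bigl(\E_{\ntw}\psi^p\Bigr)^{v/p}.
\end{equation*}
Since $p\ge v$, the exponent $v/p\le 1$ gives the elementary bound $x^{v/p}\le 1+x$ for $x\ge 0$, so
\begin{equation*}
|T_{W,\phi}(\nu)|\le C_0\Bigl(1+\E_{\ntw}\psi^p\Bigr),\qquad C_0:=\|W\|_{q\Delta}^{|E(H)|}.
\end{equation*}

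Next, I would invoke the Donsker--Varadhan variational representation of the KL divergence: treating $\psi^p$ as a function on $\Sigma$ depending only on the $\mathcal{X}$-coordinate, for every $\lambda>0$,
\begin{equation*}
\lambda\,\E_{\ntw}\psi^p=\E_\nu[\lambda\psi^p]\le D(\nu|\rho)+\log \E_\rho e^{\lambda\psi^p}=D(\nu|\rho)+C(\lambda),
\end{equation*}
where $C(\lambda):=\log\E_\mu e^{\lambda\psi(X_1)^p}<\infty$ by \eqref{eq:tailp}. (For unbounded $\psi^p$ this is standard by truncation and monotone convergence, using finiteness of $\E_\rho e^{\lambda\psi^p}$.) Combining the two displays,
\begin{equation*}
|T_{W,\phi}(\nu)|\le C_0\Bigl(1+\tfrac{D(\nu|\rho)+C(\lambda)}{\lambda}\Bigr).
\end{equation*}

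Finally, on $\mathcal{A}^*_\alpha$ we have $D(\nu|\rho)\le \alpha+|\theta|\,|T_{W,\phi}(\nu)|$, so plugging in the preceding estimate and rearranging,
\begin{equation*}
D(\nu|\rho)\Bigl(1-\tfrac{|\theta|C_0}{\lambda}\Bigr)\le \alpha+|\theta|C_0+\tfrac{|\theta|C_0\,C(\lambda)}{\lambda}.
\end{equation*}
Choosing $\lambda>2|\theta|C_0$ (so that the coefficient on the left is at least $1/2$) gives $D(\nu|\rho)\le K$ for the explicit constant $K:=2\bigl(\alpha+|\theta|C_0+|\theta|C_0 C(\lambda)/\lambda\bigr)$, which depends only on $\theta,\alpha,\mu,\psi,\|W\|_{q\Delta}$. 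The main obstacle is purely notational: justifying the Donsker--Varadhan inequality for the unbounded functional $\lambda\psi^p$, which as noted follows by standard approximation arguments thanks to \eqref{eq:tailp}.
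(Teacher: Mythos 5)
Your proof is correct, but it takes a genuinely different route from the paper. The paper argues by contradiction: if $D(\nu_k|\rho)\to\infty$ along a sequence in $\mathcal{A}^*_\alpha$, then (for $\theta>0$, wlog) $T_{W,\phi}(\nu_k)\to\infty$, so for any $\theta'>\theta$ the quantity $\theta' T_{W,\phi}(\nu_k)-D(\nu_k|\rho)\to\infty$, contradicting the finiteness of $Z(\theta')$ established in Corollary~\ref{lem:gibbs} part (i). That proof is very short but is non-constructive and imports the finiteness of the limiting log-partition function from Varadhan's lemma. Your approach instead extracts the essential input directly: the chain Lemma~\ref{lem:Tgraphon0} $\Rightarrow$ product structure of $\psi$ $\Rightarrow$ $x^{v/p}\le 1+x$ (using $p\ge v$) $\Rightarrow$ Donsker--Varadhan with a tunable $\lambda$ yields $|T_{W,\phi}(\nu)|\le C_0(1+ (D(\nu|\rho)+C(\lambda))/\lambda)$, and then choosing $\lambda>2|\theta|C_0$ closes a self-improving inequality for $D(\nu|\rho)$. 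This buys you two things: an explicit $K$ depending only on $(\theta,\alpha,\mu,\psi,\|W\|_{q\Delta})$, and independence from Corollary~\ref{lem:gibbs} part (i), so the lemma could be placed earlier in the logical ordering. The role of $p\ge v$ is also more transparent here — it is exactly what makes the exponent $v/p\le 1$ so that the $\E_{\ntw}\psi^p$ term enters linearly rather than superlinearly. One cosmetic point worth flagging for the write-up: if the resulting $K$ is nonpositive (e.g.\ $\alpha$ very negative), $\mathcal{A}^*_\alpha$ contains only $\nu$ with $D(\nu|\rho)=0$, and any $K>0$ works, so the claim still holds.
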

		
		\begin{proof}[Proof of~\cref{lem:gibbs}]
			
			(i) Using Theorem \ref{thm:main}, it follows that $U_n({\bf X})$ satisfies a LDP under $\mu^{\otimes n}$ with the good rate function $I_0(.)$. Since 
			\begin{align*}
				Z_n(\theta)=\frac{1}{n}\log \E_{\mu^{\otimes n}} e^{n\theta U_n({\bf X})},
			\end{align*}
			on invoking Varadhan's lemma (see~\cite[Theorem 4.3.1]{DZ}), assuming we can verify its conditions, we get
			\begin{align*}
				Z_n(\theta)\to \sup_{t\in \R:\ I_0(t)<\infty} \{\theta t-I_0(t)\}=\sup_{\nu\in \widetilde{M}:\  D(\nu|\rho)<\infty} \{\theta T_{W,\phi}(\nu)-D(\nu|\rho)\}<\infty,
			\end{align*}
			where the equality follows from the definition of $I_0$.
			It thus remains to show that Varadhan's Lemma is applicable, for which we need to show (using~\cite[Lemma 4.3.4]{DZ}) that for every $\theta>0$,
			\begin{align}\label{eq:varadhan}
				\limsup_{n\to\infty}\frac{1}{n}\log \E_{\mu^{\otimes n}} e^{n \theta U_n({\bf X})}<\infty.
			\end{align}
			
			To this effect, use~\cref{lem:Tgraphon0} and \eqref{eq:tailp} to get
			{\color{black}\begin{align}\label{eq:ubasebd}
				|U_n({\bf X})|
					&\le \|W_{Q_n}\|_{q\Delta}^{|E(H)|} \Big(\E_{\tilde{\mathfrak{L}}_n }|\phi(B_1,\ldots,B_v)|^p\Big)^{\frac{1}{p}}
     \nonumber \\ &\le \|W_{Q_n}\|_{q\Delta}^{|E(H)|}\Big(\frac{1}{n^v}\sum_{(i_1,\ldots ,i_v)\in [n]^v}\prod_{a=1}^v \psi^p(X_{i_a})\Big)^{\frac{1}{p}}\nonumber \\
					&= \|W_{Q_n}\|_{q\Delta}^{|E(H)|}\Big(\frac{1}{n}\sum_{i=1}^n \psi^p(X_i)\Big)^{\frac{v}{p}}.
				\end{align}} 
				By assumption \eqref{eq:q} the first term in the last line above is bounded in $n$, and so \eqref{eq:varadhan} follows if we can show that for every $\theta>0$, 
				\begin{align}\label{eq:varadhan2}
					\limsup_{n\to\infty}\frac{1}{n}\log \E_{\mu^{\otimes n}} e^{n \theta \Big(\frac{1}{n}\sum_{i=1}^n \psi^p(X_i)\Big)^{\frac{v}{p}}}<\infty.
				\end{align}
				To this effect, for any $t>0$ and $\lambda>1$ we have
				\begin{align*}
					\P\left(\Big(\frac{1}{n}\sum_{i=1}^n\psi^p(X_i)\Big)^{\frac{v}{p}}>t\right)=\P\left(\sum_{i=1}^n\psi^p(X_i)> n t^\frac{p}{v}\right)
					\le e^{-n\lambda t^{\frac{p}{v}}}\Big( \E e^{\lambda \psi^p(X_1)}\Big)^n,
				\end{align*}
				where the RHS is finite by \eqref{eq:tailp}. Since $p\ge v$ and $\E e^{\lambda \psi^p(X_1)}<\infty$ (from \eqref{eq:tailp}), choosing $\lambda>\theta$ along with the last display gives \eqref{eq:varadhan2}, and hence concludes the proof of part (i).
				\\

				
				(ii) Let $\widetilde{T}$ be a bounded continuous function on $\widetilde{\mathcal{M}}$. Then a repeat of the arguments of Theorem \ref{thm:main} gives that $$\theta U_n({\bf X})+\widetilde{T}(\widetilde{\mathcal{L}}_n)=\theta T_{W,\phi}(\widetilde{\mathcal{L}}_n)+\widetilde{T}(\widetilde{\mathcal{L}}_n)$$ satisfies a large deviation principle under $\mu^{\otimes n}$ with the good rate function
				$$\widetilde{I}_\theta(t):=\inf_{\nu\in \widetilde{\mathcal{M}}:\ D(\nu|\rho)<\infty,\ \theta T_{W,\phi}(\nu)+\widetilde{T}(\nu)=t}D(\nu|\rho).$$
				Consequently, using Varadhan's Lemma (as in part (i) above) we get
				\begin{align*}
					\frac{1}{n}\log  \E_{\mu^{\otimes n}} e^{n \theta U_n({\bf X})+n\widetilde{T}(\widetilde{\mathcal{L}}_n)}&\to \sup_{t\in \R:\ \widetilde{I}_{\theta}(t)<\infty} \{ t-\widetilde{I}_{\theta}(t)\}\\ &=\sup_{\nu\in \widetilde{M}:\ D(\nu|\rho)<\infty} \{\theta  T_{W,\phi}(\nu)+\widetilde{T}(\nu)-D(\nu|\rho)\},
				\end{align*}
				which in turn gives
				\begin{align*}
					&\;\;\;\;\frac{1}{n}\log  \E_{\R_{n,\theta}} e^{n\widetilde{T}(\widetilde{\mathcal{L}}_n)}\\&=\frac{1}{n}\log  \E_{\mu^{\otimes n}} e^{n \theta U_n({\bf X})+n\widetilde{T}(\widetilde{\mathcal{L}}_n)}-\frac{1}{n}\log  \E_{\mu^{\otimes n}} e^{n \theta U_n({\bf X})}\\
					&\to \sup_{\nu\in \widetilde{M}:\ D(\nu|\rho)<\infty} \{\theta  T_{W,\phi}(\nu)+\widetilde{T}(\nu)-D(\nu|\rho)\}-\sup_{\nu\in \widetilde{M}:\ D(\nu|\rho)<\infty} \{\theta  T_{W,\phi}(\nu)-D(\nu|\rho)\}\\ &=\sup_{\nu\in \widetilde{M}:\ D(\nu|\rho)<\infty} \{\widetilde{T}(\nu)-I_{\theta}(\nu)\},
				\end{align*}
				where $I_{\theta}$ is defined by \eqref{eq:maingrf}. Now, by~\cite[Theorem 4.4.13]{DZ}, the conclusion will follow if we can show that $I_{\theta}(\cdot)$ is a good rate function.
				
				To prove that $I_{\theta}(\cdot)$ is a good rate function, we will use~\cref{lem:contain}. 
				Towards this direction, given any $\alpha\in\R$, choose $K=K(\alpha,\theta)$ from~\cref{lem:contain} such that~\eqref{eq:astar} holds. Since $D(\cdot|\mu)$ is a good rate function, the set $\mathcal{A}_K$ is compact. On the set $ \mathcal{A}_{K}$, the function $T_{W,\phi}(\cdot)$ can be uniformly approximated by bounded continuous functions (see~\eqref{eq:later}), and is hence continuous. Therefore, $D(\cdot|\rho)-\theta T_{W,\phi}(\cdot)$ is a lower semicontinuous function on the set $\mathcal{A}_K$. In turn, this implies that the set $\mathcal{A}^*_{\alpha}$ is a closed subset of the compact set $\mathcal{A}_K$, and hence compact. Consequently,
				$I_{\theta}(\cdot)$ is a good rate function.
				\\

				For the last part, since the set $\{\nu\in \widetilde{\mathcal{M}}:d_l(\nu,F_\theta)\ge \delta\}$ is closed for any $\delta>0$, we have 
				\begin{align*}
					&\;\;\;\;\;\;\;\;\;\limsup_{n\to\infty}\frac{1}{n}\log{\R_{n,\theta}(d_l(\widetilde{\mathcal{L}}_n,F_\theta)\ge \delta)}\\ &\le -\inf_{\nu\in \widetilde{\mathcal{M}}:d_l(\nu,F_\theta)\ge \delta} \{D(\nu|\rho)-\theta T_{W,\phi}(\nu)\}+ \inf_{\nu\in\widetilde{\mathcal{M}}}\{D(\nu|\rho)-T_{W,\phi}(\nu)\}<0,
				\end{align*}
				where the last inequality follows from the fact that $D(\cdot|\rho)-\theta T_{W,\phi}(\cdot)$ is a lower semicontinuous function, as verified above. The desired conclusion of part (ii) follows.
			\end{proof}
			
			\begin{proof}[Proof of~\cref{lem:con_not}]
				
				(i) Since $U_n({\bf X})$ satisfies a LDP under the measure $\mu^{\otimes n}$ with the good rate function $I_0$ (see~\cref{thm:main}), it follows from \cite[Theorem III.17]{Hollander} that $U_n({\bf X})$ satisfies a LDP under $\R_{n,\theta}$ with the good rate function
				$$J_\theta(t):=\{I_0(t)-\theta t\}-\inf_{z\in \R:\ I_0(z)<\infty}\{I_0(z)-\theta z\}.$$
				We now claim the following:
			\textcolor{black}{	\begin{itemize}
				    \item the unique minimizer of $J_\theta$ is $t_\theta:=T_{W,\phi}(F_\theta)$.
				    \item
				    There exists $K_0>0$ free of $n$ such that for all $K\ge K_0$ we have
			\begin{align}\label{eq:item2}\R_{n,\theta}(|U_n({\bf X})|>K)\le 2 e^{-nK/2}.
				    \end{align}
				\end{itemize}
				Given the first claim, it follows that $U_n({\bf X})\stackrel{P}{\to}t_\theta$ under $\R_{n,\theta}$. Also, by the second claim we have $U_n({\bf X})$ is uniformly integrable, and so
		$Z_n'(\theta)=\E_{\R_{n,\theta}} U_n({\bf X})\to t_\theta.$
					On the other hand, since $Z(\cdot)$ is differentiable at $\theta$ and $Z_n(\cdot)$ is convex, we have $Z_n'(\theta)\to Z'(\theta)$. Consequently we get $Z'(\theta)=t_\theta$, as desired. 
				\\
				To complete the argument, it suffices to verify the two claims above, which we carry out below.
				\begin{itemize}
				    \item  Suppose $t_\theta$ is not the unique global minimizer of $J_\theta$. Then there exists $t_\theta'$ such that $t_\theta\ne t_\theta'$ and $$J_\theta(t_\theta')\le J_\theta(t_\theta)\Rightarrow I_0(t_\theta')-\theta t_\theta'\le I_0(t_\theta)-\theta t_\theta.$$ Since $D(\cdot|\rho)$ is a good rate function, there exists $\nu_\theta'$ such that $$D(\nu_\theta'|\rho)=I_0(t_\theta')\text{ and }T_{W,\phi}(\nu_\theta')=t_\theta'.$$ Consequently, for any $\nu\in F_\theta$ we have
				\[\theta T_{W,\phi}(\nu_\theta')-D(\nu_\theta'|\rho)=\theta t_\theta'-I_0(t_\theta')\ge \theta t_\theta-I_0(t_\theta)\ge \theta T_{W,\phi}(\nu)-D(\nu|\rho),\]
				But since $\nu$ is a global minimizer, it follows that so is $\nu_\theta'$, which is a contradiction, as $T_{W,\phi}(\nu_\theta')=t_\theta'\ne T_{W,\phi}(\nu)$.
				\item
				For any $K>0$ using Markov's inequality gives
				\begin{align*}
				    \R_{n,\theta}(U_n({\bf X})>K)\le e^{-nK}\E_{\R_{n,\theta}}e^{nU_n({\bf X})}=e^{-n(K-Z_n(\theta+1)+Z_n(\theta))}.
				\end{align*}
				Using a similar argument for the lower tail, setting
		$$C:=\sup_{n\ge 1}\max\left\{|Z_n(\theta+1)-Z_n(\theta)|, |Z_n(\theta)-Z_n(\theta-1)|\right\}<\infty $$
		we have 
		$$\R_{n,\theta}(|U_n({\bf X})|>K)\le 2 e^{-n(K-C)}\le e^{-\frac{nK}{2}},$$
		where the last inequality holds for all $K\ge 2C=:K_0$.
				\end{itemize}
				}

				(ii) Fix $\gamma\in F_\theta$. Then $T_{W,\phi}(\gamma)=Z'(\theta)=t$ Let $\gamma'\in \widetilde{\mathcal{M}}$ be arbitrary, such that $T_{W,\phi}(\gamma')=t$. Since $\gamma\in F_\theta$, we have
				\[ \theta T_{W,\phi}(\gamma)-D(\gamma|\rho)\ge \theta T_{W,\phi}(\gamma')-D(\gamma'|\rho).\]
				Also since $T_{W,\phi}(\gamma)=t=T_{W,\phi}(\gamma')$, we get $D(\gamma|\rho)\le D(\gamma'|\rho)$. Thus $$\gamma\in \arg\inf_{\nu\in \widetilde{\mathcal{M}}:T_{W,\phi}(\nu)=t}D(\nu|\rho).$$
				\par	For the other direction, fix $\gamma\in \arg\inf_{\nu\in \widetilde{\mathcal{M}}:\ D(\nu|\rho)<\infty,\ T_{W,\phi}(\nu)=t}D(\nu|\rho).$ Then we have $T_{W,\phi}(\gamma)=t$. Let $\gamma'\in F_\theta$ which implies $T_{W,\phi}(\gamma')=Z'(\theta)=t$ by part (i) above. Then we have $D(\gamma|\rho)\le D(\gamma'|\rho)$, and so
				$$\theta T_{W,\phi}(\gamma)-D(\gamma|\rho)\ge \theta T_{W,\phi}(\gamma')-D(\gamma'|\mu).$$
				But then $\gamma$ is an optimizer of $\theta T_{W,\phi}(\nu)-D(\nu|\rho)$, and so $\gamma\in F_\theta$. This completes the proof of the theorem.

			\end{proof}

			\subsection{Proofs from Sections~\ref{sec:multlin}~and~\ref{sec:tri}}
			
			\begin{proof}[Proof of~\cref{thm:multilinear}]
			We apply Theorem \ref{thm:main} with $\phi(x_1,\ldots,x_v)=\prod_{a=1}^v x_a$ to get that $N_1(H,Q_n,\bf{X})$ satisfies a LDP with good rate function 	
			\begin{equation}\label{eq:multilinear1}
				\inf_{\nu \in \widetilde{\mathcal{M}}:\ D(\nu|\rho)<\infty,\ T_{W,\phi}(\nu)=t} D(\nu|\rho).
			\end{equation}
			We now prove this rate function is equivalent to $I_1(t)$ defined in \eqref{eq:i1}. 
			For any $\nu\in \widetilde{\mathcal{M}}$ such that $D(\nu|\rho)<\infty$, invoking~\cref{lem:basicres} part (i) with $\eta(x)=|x|$, we have $\E_\nu(|V|)<\infty$, and so the function $h_\nu(u):=\E_\nu(V|U=u)$ is well-defined and finite a.s.~Further, invoking~\cref{lem:basicres} part (i) with $\eta(x)=|x|^p$ along with Jensen's inequality we have
			$$\int_{[0,1]}|h_{\nu}(u)|^pdu=\E_{\nu}|\E_\nu(V|U)|^p|\le \E_{\nu}|V|^p<\infty,$$
			as $\E_\mu|X_1|^p<\infty$. Consequently $h_\nu\in \mathcal{L}$. Finally, $T_{W,\phi}$ is well-defined and finite invoking ~\cref{lem:basicres} part (ii), and using Fubini's theorem gives
			\begin{equation}\label{eq:eqty1}
			T_{W,\phi}(\nu)=\int_{[0,1]^v}\prod_{(a,b)\in E(H)} W(u_a,u_b)  \prod_{a=1}^v h_\nu(u_a) du_a=G_{1,W}(h_\nu),
			\end{equation}
			where $G_{1,W}(\cdot)$ is defined as in~\cref{def:g1}. 
			This gives
			\begin{equation}\label{eq:mtof1}
				\inf_{\nu \in \widetilde{\mathcal{M}}:\ D(\nu|\rho)<\infty,\ T_{W,\phi}(\nu)=t} D(\nu|\rho)=\inf_{f\in \mathcal{L}:\ G_{1,W}(f)=t} \inf_{\nu \in \widetilde{\mathcal{M}}: D(\nu|\rho)<\infty,\ h_{\nu}=f} D(\nu|\rho),
			\end{equation}
			To compute the RHS above, fixing $f\in \mathcal{L}$, for any $\nu\in \widetilde{\mathcal{M}}$ we have
			\begin{equation}\label{eq:eqty2}
			D(\nu|\rho)=\int_{[0,1]} D(\nu(V|U=u)|\mu) du.
			\end{equation}
			Fixing $u\in [0,1]$, we have $\E_\nu(V|U=u)=h_\nu(u)=f(u)$. We now consider two cases, depending on whether $f(u)\in \mathcal{N}$ or not, where $\mathcal{N}$ is as in~\cref{def:tilt}.
			
			\begin{itemize}
				\item $f(u)\in \mathcal{N}$.
				
				In this case, among the class of all probability measures $\nu'$ on $\mathcal{X}$ such that $\int_\mathcal{X} x\nu'(dx)=f(u)$, the measure which uniquely maximizes
				$D(\nu'|\mu)$ is an exponential tilt of $\mu$, given by $$\frac{d\nu'_{\rm opt}}{d\mu}(v)=e^{\theta  v-\alpha(\theta)},$$
				where $\theta$ is the unique solution of $\alpha'(\theta)=f(u)$ (cf.~\cite[Section 12.3]{Cover2006}). 
				Equivalently $\theta=\beta(f(u))$, where $\beta$ is the inverse of $\alpha'(\cdot)$. This gives
				$$D(\nu'_{\rm opt}|\mu)= \gamma(\beta(f(u))).$$

				\item $f(u)\in \bar{\mathcal{N}}/\mathcal{N}$.
				
				Without loss of generality assume $f(u)=a:=\sup\{\mathcal{N}\}<\infty$. 
				Since $\mathcal{N}\subset (-\infty,a)$, it follows that $\mu$ is supported on $(-\infty,a]$, and the only measure $\nu'$ supported on $(-\infty,a]$ which satisfies $\int_{\mathcal{X}} x \nu'(dx)=a$ is $\nu'_{\rm opt}=\delta_a$. Consequently, we have
				$$D(\nu'_{\rm opt}|\mu)= D(\delta_a|\mu)= \gamma(\infty)=\gamma(\beta(f(u)),$$
				where we use the definition of $\gamma$ as an extended real valued function in Definition \ref{def:tilt}.

			\end{itemize}

			Combining the two cases above, we have
			\begin{equation}\label{eq:mtof2}
				I_1(t)=\inf_{f\in \mathcal{L}:\ \int_{[0,1]} \gamma(\beta(f(u))) du<\infty,\ G_{1,W}(f)=t} \int_{[0,1]} \gamma(\beta(f(u))) du,
			\end{equation}
			as desired. 
			
			\end{proof}

			\begin{proof}[Proof of~\cref{thm:multilinear_gibbs}]
			(i) The conclusion is an immediate consequence of~\cref{lem:gibbs} part (i), followed by similar calculations as in part (i) to replace the optimization over the space of measures $\widetilde{\mathcal{M}}$ to an optimization over the space of functions $\mathcal{L}$. We sketch this part of the argument for ease of exposition. Given $\nu\in\widetilde{\mathcal{M}}$ such that $T_{W,\phi}(\nu)$ and $D(\nu|\rho)$ are both finite, we have:
			\begin{align}\label{eq:repeat1}
			    \theta T_{W,\phi}(\nu)-D(\nu|\rho)=\theta G_{1,W}(h_{\nu})-\int_{[0,1]} D(\nu(V|U=u)|\mu)\,du,
			\end{align}
			where the above equality uses~\eqref{eq:eqty1}~and~\eqref{eq:eqty2}. From~\eqref{eq:repeat1}, by~\cref{lem:gibbs}, part (i), we further get:
			\begin{equation}\label{eq:repeat2}
			\lim_{n\to\infty} Z_n^{(1)}(\theta)=\sup_{\substack{f\in\mathcal{L},\\ G_{1,W}(f)<\infty}} \left\{\theta G_{1,W}(f)-\inf_{\substack{\nu\in\widetilde{\mathcal{M}},\ h_{\nu}=f\\ D(\nu|\rho)<\infty,\ }} \int_{[0,1]} D(\nu(V|U=u)|\mu)\,du.\right\}
			\end{equation}
			From part (i), the inner infimum in~\eqref{eq:repeat2} simplifies as:
			$$\inf_{\nu\in\widetilde{\mathcal{M}},\ D(\nu|\rho)<\infty,\ h_{\nu}=f} \int_{[0,1]} D(\nu(V|U=u)|\mu)\,du=\int_{[0,1]}\gamma(\beta(f(u)))\,du.$$
			This completes the proof.

			\begin{remark}\label{rem:connection}
			Recall the definitions of $F_{\theta}$ and $F_{\theta}^{(1)}$ from~\cref{lem:gibbs}, part (ii), and~\cref{thm:multilinear_gibbs}, part (ii), respectively. As in~\cref{thm:multilinear}, assume that $\phi(x_1,\ldots ,x_v)=\prod_{i=1}^v x_i$. Two  important conclusions from the above calculation are:
			$$F_{\theta}=\Xi_1(F_\theta^{(1)}),\quad F^{(1)}_{\theta}=\{\E_{\nu}[V|U=u]:\ \nu\in F_{\theta}\}\subseteq \mathcal{L}.$$
			\end{remark}
			
			(ii) The conclusion in~\eqref{eq:multweaklim}, i.e.,  $d_l(\mathfrak{L}_n,\Xi_1(F^{(1)}_\theta))\overset{P}{\longrightarrow}0$ follows directly from~\cref{lem:gibbs}, part (ii), and~\cref{rem:connection}. 	\newline 
			
			(iii) To prove~\eqref{eq:weaaksta}, we first assume that $g(\cdot):[0,1]\to\R$ is a bounded Lipschitz function. Define 
			\begin{equation}\label{eq:deftrun}
				h_M(y):=\begin{cases} -M & \mbox{if}\ y\leq -M \\ y & \mbox{if} -M\leq y \leq M \\ M & \mbox{if}\ y>M,\end{cases}
			\end{equation}
			for some $M>0$, and note that the function $\eta_M:[0,1]\times \R\mapsto \R$ defined by
			$\eta_M(x,y):= g(x) h_M(y)$ is bounded and Lipschitz (with Lipschitz constant depending on $M$). Also we have
			\[ \E_{\tilde{\mathfrak{L}}_n}\eta_M=\int_{[0,1]} g(u)h_M(\omega_n(u))du. \]
			Thus invoking~\eqref{eq:multweaklim} gives
			\begin{equation*}
				\inf_{f\in F^{(1)}_{\theta}}\bigg|\int h_M(\omega_n(u))g(u)\,du - \int h_M(f(u))g(u)\,du\bigg|\overset{P}{\longrightarrow}0,\end{equation*}
			for every fixed $M$. Also note that
			\begin{align*}
			   \bigg|\int h_M(\omega_n(u))g(u)\,du - \int\omega_n(u)g(u)\,du\bigg|\le& \|g\|_\infty\frac{1}{n}\sum_{i=1}^n|X_i| 1\{|X_i|\ge M\},\\
			    \bigg|\int h_M(f(u))g(u)\,du - \int f(u)g(u)\,du\bigg|\le &\|g\|_\infty \int |f(u)|1\{|f(u)|\ge M\}\, du.
			\end{align*}
			Given the above two displays, to show \eqref{eq:weaaksta} for $g$ Lipschitz, it suffices to show that
			for any $\delta>0$,
			\begin{align*}
		\limsup_{M\to\infty}\limsup_{n\to\infty} \R_{n,\theta}^{(1)}\Big(\sum_{i=1}^n |X_i| 1\{|X_i|>M\}>n\delta\Big)&=0,\\
				\limsup_{M\to\infty}\sup_{f\in F_\theta^{(1)}} \int_{[0,1]}|f(u)|1\{|f(u)|>M\}\,du&=0.
			\end{align*}
			These follow, if we can show 
			\begin{align}
			\label{eq:lip11}\sum_{i=1}^n \E_{\R_{n,\theta}^{(1)}}|X_i|^p&=O(n),\\
				\label{eq:lip12}\sup_{f\in F_\theta^{(1)}} \|f\|_p&<\infty.
			\end{align}
			Deferring the proofs of \eqref{eq:lip11} and \eqref{eq:lip12}, we now prove \eqref{eq:weaaksta} for a general $g\in L^{p^*}[0,1]$. To this effect,
			let $\{g_\ell\}_{\ell\ge 1}$ be a sequence of bounded Lipschitz continuous functions (with bounds and Lipschitz constants depending on $\ell$), such that $g_\ell \stackrel{L^{p^*}[0,1]}{\to}g$. Then for any $f\in F_\theta^{(1)}$, triangle inequality and H\"{o}lder's inequality gives
			\begin{align*}
				\bigg|\int ( \omega_n -f)g\bigg|\le & \bigg|\int \omega_n(g-g_\ell)\bigg|+\bigg|\int (\omega_n-f)g_\ell\bigg|+\bigg|\int f(g_\ell-g)\bigg|\\
				\le &\|\omega_n\|_{p}\|g-g_\ell\|_{p^*}+\bigg|\int (\omega_n-f)g_\ell\bigg|+\|f\|_p \|g_\ell-g\|_{p^*},
			\end{align*}
			which on taking an infimum over $f\in F_\theta^{(1)}$ bounds $\inf_{f\in F_\theta^{(1)}}\bigg|\int ( \omega_n -f)g\bigg|$ by
			\begin{align}\label{eq:lip1}
		 \|\omega_n\|_{p}\|g-g_\ell\|_{p^*}+\inf_{f\in F_\theta^{(1)}}\bigg|\int (\omega_n-f)g_\ell\bigg|+\Big(\sup_{f\in F_\theta^{(1)}}\|f\|_p\Big) \|g_\ell-g\|_{p^*}.
			\end{align}
			On letting $n\to \infty$, the second term in \eqref{eq:lip1} converges in probability to $0$, using~\eqref{eq:weaaksta} for Lipschitz functions. The first term converges in probability to $0$ as $n\to\infty$ followed by $\ell\to\infty$, since invoking \eqref{eq:lip11} gives
			\begin{align*}
		\|\omega_n\|_p^p=\frac{1}{n}\sum_{i=1}^n|X_i|^p=O_P(1)
			\end{align*}
			under $\R_{n,\theta}$. Finally, the third term in the RHS converges to $0$ as $\ell\to\infty$, on invoking \eqref{eq:lip12}.
			\\
			
			To complete the proof, it thus remains to verify \eqref{eq:lip11} and \eqref{eq:lip12}. These are done below:
			
			\begin{itemize}
				\item{Proof of \eqref{eq:lip11}}

			To begin, fixing $C:=\sup_{n\ge1}|Z_n(\theta)|<\infty$ (see~\cref{lem:gibbs}, part (i)), for $M,K>0$ (to be chosen later) we have:
				\begin{align*}
		&{\R_{n,\theta}^{(1)}}\left(\frac{1}{n}\sum_{i=1}^n |X_i|^{p}\geq M\right) \\\leq& {\R_{n,\theta}^{(1)}}(|U_n({\bf X})|\geq K)+\exp(nK |\theta|+nC)\P_{\mu^{\otimes n}}\left(\frac{1}{n}\sum_{i=1}^n |X_i|^{p}\geq M\right)\nonumber \\
		\leq& 2\exp(-nK/2)+\exp(nK|\theta|+nC-nM)\Big(\E_{\mu}\exp(|X_1|^{p})\Big)^n\nonumber\\
		\leq& 2\exp(-nK/2)+\exp(2nK|\theta|-nM)\nonumber\\
		\notag\le & 2e^{-nK/2}+e^{-nM/2},
				\end{align*}
				where the second inequality uses \eqref{eq:item2} for $K\ge K_0$,
				the third inequality needs $K|\theta|\ge C+\log \E_\mu [\exp(|X_1|^p)])$, and the fourth inequality needs $M\ge 4K|\theta|$. Thus with $$K\ge \max\Big[K_0,\frac{C+\log \E_\mu [\exp(|X_1|^p)])}{|\theta|}\Big]$$ we get
		$${\R_{n,\theta}^{(1)}}\left(\frac{1}{n}\sum_{i=1}^n |X_i|^{p}\ge \frac{K}{4|\theta|}\right)\le 2e^{-nK/2}+e^{-2nK|\theta|}.$$		
			From this	\eqref{eq:lip11} follows on integrating over $K$.
				
				
				\item{Proof of \eqref{eq:lip12}}
				
				If the conclusion does not hold, there exists a sequence of functions $\{f_k\}_{k\ge 1}$ in $F_\theta^{(1)}$ such that $\lim_{k\to\infty}\|f_k\|_p\to \infty$. Set $\nu_k:=\Xi_1(f_k)$, and note that $f_k(u)=\E_{\nu_k}[V|U=u]$. From~\cref{rem:connection}, it follows that $\nu_k\in F_{\theta}$, where $F_{\theta}$ is the set of optimizers defined as in~\cref{lem:gibbs}, part (ii). Thus there exists $\alpha\in \R$ such that $\sup_{k\ge 1}I_\theta(\nu_k)\le \alpha$. Using~\cref{lem:contain} we get the existence of $K<\infty$ such that $\sup_{k\ge 1}D(\nu_k|\rho)\le K$. Invoking~\cref{lem:basicres} with $\eta(x)=|x|$, we then get
				$$\sup_{k\ge 1}\E_{\nu_k^{(2)}}[|V|^p]\le \sup_{\nu:D(\nu|\rho)\le K}\E_{\nu^{(2)}}[|V|^p]\le C(K,\mu)<\infty.$$
				Using Jensen's inequality along with the above display, we then get
				$$\sup_{k\geq 1}\lVert f_k\rVert_{p}=\sup_{k\geq 1} \lVert \E_{\nu_k}[V|U]\lVert_p\leq \sup_{\nu\in\mathcal{A}_{K}} \left(\E_{\ntw}|V|^p\right)^{\frac{1}{p}}\leq C(K,\mu)^{1/p}<\infty,$$
				which is a contradiction to the assumption that $\|f_k\|_p\to \infty$. 
				This completes the proof.
			\end{itemize}

						\end{proof}
			
			\begin{proof}[Proof of~\cref{thm:monochromatic}]
			We apply Theorem \ref{thm:main} with $\phi(x_1,\ldots,x_v)=1\{x_1=x_2=\ldots=x_v\}$ to get that $N_2(H,Q_n, \bf{X})$ satisfies a LDP with good rate function 	
			\begin{equation}
				\inf_{\nu \in \widetilde{\mathcal{M}}:\ D(\nu|\rho)<\infty,\ T_{W,\phi}(\nu)=t} D(\nu|\mu).
			\end{equation}
			We now prove this rate function is equivalent to $I_2(t)$ defined in \eqref{eq:i2}. To this end, for any $\nu\in \widetilde{\mathcal{M}}$, setting $h_{\nu,r}(u):=\P_\nu(V=r|U=u)$ for $r\in [c]$ we see that ${\bf h}_\nu:=(h_{\nu,r})_{r\in [c]}\in \mathcal{F}_c$, and  $$T_{W,\phi}(\nu)=\sum_{r=1}^c\int_{[0,1]^v}\prod_{(a,b)\in E(H)} W(u_a,u_b)  \prod_{a=1}^v h_{\nu,r}(u_a) du_a=G_{2,W}({\bf h}_\nu).$$
			
			\noindent Conversely, given any ${\bf f}\in \mathcal{F}_c$, there is a unique  measure $\nu\in \widetilde{\mathcal{M}}$ such that ${\bf f}(u)={\bf h}_\nu(u)$. 
			Indeed, such a $\nu$ can be obtained as the law of $(U,V)$, where $U\sim \mathrm{U}[0,1]$, and given $U$, let $V$ be a random variable taking values in $[c]=\{1,2,\ldots,c\}$ with probabilities $\{f_1(U),\ldots,f_c(U)\}$ (which adds upto $1$ by definition of $\mathcal{F}_c$.
			Finally, for any $\nu\in \widetilde{\mathcal{M}}$ we have
			$$D(\nu|\mu)=\int_{[0,1]} D(\nu(V|U=u)|\mu) du=\int_{[0,1]}\sum_{r=1}^c h_{\nu,r}(u)\log \frac{h_{\nu,r}(u)}{\mu_r}du$$ This gives
			\begin{eqnarray*}
				I_2(t)=	&\inf_{\nu \in \widetilde{\mathcal{M}}:\ D(\nu|\rho)<\infty,\ T_{W,\phi}(\nu)=t} D(\nu|\mu)\\
				=&\inf_{{\bf f}\in \mathcal{F}_c:\ G_{2,W}({\bf f})=t} \Big\{\int_{[0,1]}\sum_{r=1}^c h_{\nu,r}(u)\log\frac{ h_{\nu,r}(u)}{\mu_r}du\Big\},
			\end{eqnarray*}
			as desired.
			
			\end{proof} 
			
			\begin{proof}[Proof of~\cref{thm:monochromatic_gibbs}]
			
			The conclusions in~\cref{thm:monochromatic_gibbs} use the same arguments as the equivalent conclusions in~\cref{thm:multilinear_gibbs}.
			\newline

			\end{proof}

			\section{Proofs of Auxiliary Results}\label{sec:auxlem}
			
			We start with the following definition.
			\begin{defn}
				For any $W\in \mathcal{W}$,  
				and $\mathbf{f}=\{f_a, 1\le a\le v\}$ measurable functions with $f_a:\mathcal{X}\mapsto \R $, define
				\[
				t(H,W,{\bf f})= \int_{[0,1]^{v}}\prod_{(a,b)\in E(H)} W(x_a,x_b) \prod_{a=1}^v f_a(x_a) dx_a,
				\]
				if the above integral exists as a finite real number.
				
			\end{defn}
			We now state the following proposition, which will be used in proving the lemmas of~\cref{sec:main}. The proof of this proposition is provided alongside.
			\begin{prop}\label{lem:gen_holder}
				Let ${\bf f}:\mathcal{X}^v\mapsto [-1,1]$, and $q>1$.
				
				(i)	If $W$ be a graphon such that $\|W\|_{q\Delta}<\infty$, 
				then $t(H,W,{\bf f})$ is well-defined, and $|t(H,W,{\bf f})|\le \|W\|^{|E(H)|}_{q\Delta}.$
				\\
				
				(ii) Let $\{W_n\}_{n \ge 1}, W$ be graphons such that \eqref{eq:q} holds, and $\|W_n-W\|_{\square} \rightarrow 0$.	 Then $$\lim\limits_{n \rightarrow \infty}\sup_{{\bf f}:\mathcal{X}^v\mapsto [-1,1]^v} |t(H,W_n,{\bf f})-t(H,W,{\bf f})| =0.$$
				
			\end{prop}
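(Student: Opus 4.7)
For part (i), I plan to invoke a generalized H\"older (Finner) inequality on the integrand $\prod_{(a,b)\in E(H)} |W(x_a,x_b)|\prod_{a=1}^v |f_a(x_a)|$. Treating each edge of $H$ as a two-variable factor with exponent $\Delta$ and each vertex of $H$ as a one-variable factor with exponent $\infty$, the Finner balance condition at each vertex $a\in[v]$ reads $d(a)/\Delta + 0 \le 1$, which holds since $d(a) \le \Delta$ by definition of the maximum degree. This yields
$$
|t(H,W,{\bf f})| \;\le\; \|W\|_\Delta^{|E(H)|} \prod_{a=1}^v \|f_a\|_\infty \;\le\; \|W\|_\Delta^{|E(H)|} \;\le\; \|W\|_{q\Delta}^{|E(H)|},
$$
where the last inequality uses monotonicity of $L^p$ norms on the probability space $[0,1]^2$ (since $q\Delta \ge \Delta$ as $q>1$). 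Finiteness of the right-hand side also establishes well-definedness of $t(H,W,{\bf f})$.

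For part (ii), the plan is a telescoping argument over the edges of $H$, reducing matters to a per-edge cut-norm estimate that I will control via a counting lemma for $L^{q\Delta}$-graphons. Enumerate $E(H)=\{e_1,\ldots,e_m\}$ with $m=|E(H)|$, and for $0\le k\le m$ let $W^{(k)}$ denote the edge-labelling that uses $W_n$ on $\{e_1,\ldots,e_k\}$ and $W$ on the rest, so that
$$
t(H,W_n,{\bf f}) - t(H,W,{\bf f}) \;=\; \sum_{k=1}^{m} \bigl[t(H,W^{(k)},{\bf f}) - t(H,W^{(k-1)},{\bf f})\bigr].
$$
Writing $e_k=(a,b)$ and carrying out the integration in the other $v-2$ variables first, the $k$-th summand collapses to $\int_{[0,1]^2} (W_n - W)(x_a,x_b)\,\Psi_k(x_a,x_b)\,dx_a\,dx_b$, where $\Psi_k$ is a bivariate function assembled from the remaining edge factors (involving $W_n$ and $W$) and the vertex decorations $f_c$. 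Applying the H\"older argument of part~(i) to the sub-hypergraph on those $v-2$ integrated variables gives a uniform bound on $\Psi_k$ in a suitable $L^{p}$ norm, depending only on $\limsup_n \|W_n\|_{q\Delta}$ and $\|W\|_{q\Delta}$ and not on the particular choice of ${\bf f}$ with $\|f_c\|_\infty \le 1$.

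The main obstacle is this final step: cut norm natively controls only integrals of $W_n - W$ against rank-one test functions of the form $1_{S\times T}$, whereas $\Psi_k$ is an arbitrary bounded $L^p$ function of two variables. To bridge the gap I plan to invoke the $L^p$-graphon counting lemma (\cite[Proposition 2.19]{borgs2018p}), whose mechanism approximates $\Psi_k$ by a linear combination of rectangle indicators on a fine enough partition of $[0,1]^2$: cut-norm smallness of $W_n-W$ handles each rectangle, while the approximation error is dominated by the uniform $L^{q\Delta}$ bounds on $W_n$ and $W$ via H\"older. Because the bound on $\Psi_k$ from the preceding step is uniform in ${\bf f}$, the resulting per-edge estimate $|t(H,W^{(k)},{\bf f}) - t(H,W^{(k-1)},{\bf f})| \to 0$ is likewise uniform in ${\bf f}$, and summing over $k=1,\ldots,m$ yields the claim.
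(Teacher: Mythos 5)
Part (i) is essentially the paper's argument: the paper applies H\"older in $q$ and then the Finner/generalized-H\"older bound of \cite[Proposition~2.19]{bc_lpi} to conclude $\|W\|_{q\Delta}^{|E(H)|}$, while you apply Finner directly with exponent $\Delta$ per edge and then pass to $q\Delta$ by monotonicity of $L^p$ norms on the probability space $[0,1]^2$. Both are correct and arrive at the same estimate.

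Part (ii) has a genuine gap. After the telescoping decomposition, you integrate out the other $v-2$ variables to arrive at $\int_{[0,1]^2}(W_n-W)(x_a,x_b)\,\Psi_k(x_a,x_b)\,dx_a dx_b$ with $\Psi_k$ a generic bounded bivariate function, and you correctly observe that cut norm does not control such pairings. The proposed repair --- approximating $\Psi_k$ by a linear combination of rectangle indicators and attributing the remainder to $L^{q\Delta}$ bounds --- cannot be made to work: (a) there is no partition-independent bound on the $\ell^1$ mass of the rectangle coefficients uniformly over ${\bf f}$, since $\Psi_k$ depends on the arbitrary measurable decorations $f_c$; and (b) more fundamentally, the hypothesis is only $\|W_n-W\|_\square\to 0$, so the difference $W_n-W$ is in general \emph{not} small in any $L^p$ norm (the paper's \cref{rem:ex} gives an explicit example with $\|W_{Q_n}-W\|_1\to \tfrac{1}{2(1-\alpha)^2}>0$), hence the ``approximation error'' term $\int(W_n-W)\cdot(\text{error})$ cannot be controlled by making the error small in $L^p$. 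The resolution, as in \cite[Theorem~2.20, eq.~(8.3)]{bc_lpi} (which is the counting lemma the paper actually invokes --- Proposition~2.19 is only the H\"older bound), is to \emph{not} integrate out the other $v-2$ variables first: with those variables held fixed, the test function in $(x_a,x_b)$ factorizes as $g(x_a)h(x_b)$ times a constant, so the cut norm applies directly, and a separate truncation argument (splitting each graphon into a bounded piece and a small-$L^1$ tail controlled via the $L^{q\Delta}$ bound) handles the regions where $g,h$ are large. The paper's proof consists precisely of noting that, after inserting the vertex decorations $f_c$, this argument of \cite{bc_lpi} goes through verbatim because $\|a\,f_{i_t}\|_\infty\le 1$ and $\|b\,f_{j_t}\|_\infty\le 1$.
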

			
			\begin{proof}[Proof of~\cref{lem:gen_holder}]
				\emph{Part (i).} To begin, use H\"older's inequality to get
				\begin{align}\label{eq:hold}
					|t(H,W,{\bf f})|\le \Big(\int_{[0,1]^v} \prod_{(a,b)\in E(H)} |W(x_a,x_b)|^q \prod_{a=1}^v dx_a\Big)^{\frac{1}{q}}\le  \|W\|^{|E(H)|}_{q\Delta},
				\end{align}
				where the second inequality uses \cite[Proposition 2.19]{bc_lpi}.

				\emph{Part (ii).} 
				The fact that $t(H,W_n,{\bf f})$ and $t(H,W,{\bf f})$ are both well-defined follows from (i). The convergence  follows by replacing $t(F,W)$ with $t(H,W,\mathbf{f})$ in~\cite[Thm 2.20]{bc_lpi}. To be specific,  in~\cite[(8.3)]{bc_lpi}, one needs to replace $a(x_{i_t})b(x_{i_t})$ by  $a(x_{i_t})b(x_{j_t})f_{i_t}(x_{i_t})f_{j_t}(x_{j_t})$, and note that  $\|a(x_{i_t})f_{i_t}(x_{i_t})\|_{\infty} \le 1$, $\|a(x_{j_t})f_{j_t}(x_{j_t})\|_{\infty} \le 1$. The rest of the argument goes through, completing the proof.

			\end{proof}

			\begin{proof}[Proof of Lemma \ref{lem:Tgraphon0}]
				
				Set $q^*=\frac{q}{q-1}$. H\"older's inequality implies
			\small{	\begin{align*}
					T_{|W|,|\phi|}(\nu)\le& \Big(\int_{[0,1]^v} \prod_{(a,b)\in E(H)}\Big|W(x_a, x_b)\Big|^{q} \prod_{a=1}^v dx_a\Big)^{\frac{1}{q}} \Big(\E_{(\ntw)^{\otimes v}} |\phi(B_1,\ldots,B_v)|^{q^*}\Big)^{\frac{1}{q^*}}.
				\end{align*}}
				As $\frac{1}{p}+\frac{1}{q}\leq 1$, we have $p\geq q^*$. By a further application of H\"{o}lder's inequality, we can then replace $q^*$ in the above display by $p$. With this observation, using~\eqref{eq:hold}, the first term in the RHS above can be bounded by $ \|W\|^{|E(H)|}_{q\Delta}$. Lemma \ref{lem:Tgraphon0} follows. 
				
				%

		\end{proof}

	\begin{proof}[Proof of Lemma \ref{lem:phim}]
		
		Setting $\widetilde{\phi}_M:=\phi-\phi_M$ and then using~\cref{lem:Tgraphon0} with
		$\phi$ replaced by $\widetilde{\phi}_M$ and a general $W(\cdot,\cdot)$, we get
		\begin{align}\label{eq:Holder1}
			|T_{W,\widetilde{\phi}_M}(\nu)|
				\le &\|W\|^{|E(H)|}_{q\Delta} \Big(\E_{\nu} |\widetilde{\phi}_M(B_1,\ldots,B_v)|^p\Big)^{\frac{1}{p}}.
			\end{align}
			To bound the second term in the RHS above, {\color{black} using \eqref{eq:tailp}}, we get
			\begin{align*}
				|\widetilde{\phi}_M(B_1,\ldots,B_v)|=&1\{|\phi(B_1,\ldots,B_v)|>M\} |\phi(B_1,\ldots,B_v)| \\
				\le &1\{\max_{a\in [v]}\psi(B_a)>M^{1/v}\}\prod_{b=1}^v \psi(B_b) ,
			\end{align*}
			which gives
			\begin{align}
			\notag\E_{\nu} |\widetilde{\phi}_M(B_1,\ldots,B_v)|^p 
				\le& \sum_{a=1}^v \E_{\nu} \left(\prod_{b=1}^v \psi^p(B_b) \mathbbm{1}_{\psi^p(B_a) > M^{1/v}}\right) \\
				= &v \Big(\E_{\nu} \psi^p\Big)^{v-1} \E_{\nu}\psi^p\mathbbm{1}_{\psi^p > M^{1/v}} \label{eq:phi_m_bound_ii}
			\end{align} 
			Combining \eqref{eq:Holder1} and \eqref{eq:phi_m_bound_ii} we get
			\begin{align}\label{eq:hold+phim}
				|T_{W,\widetilde{\phi}_M}(\nu)|\le v\|W\|^{|E(H)|}_{q\Delta}  \Big(\E_{\nu} \psi^p\Big)^{v-1} \E_{\nu}\psi^p\mathbbm{1}_{\psi^p > M^{1/v}}.
			\end{align}
			
			(i) Observe that $V_n({\bf X})-V_{n,M}({\bf X})=T_{W_{Q_n},\widetilde{\phi}_M}({\bf X})$.  To show part (i), using $W=W_{Q_n}$ and $\nu=\widetilde{\mathcal{L}}_n$ in \eqref{eq:hold+phim} we get the bound
			\begin{align*}
				|T_{W_{Q_n},\widetilde{\phi}_M}(\widetilde{\mathcal{L}}_n)|\le v\|W_{Q_n}\|^{|E(H)|}_{q\Delta}\Big(\frac{1}{n}\sum_{i=1}^n\psi(X_i)^p\Big)^{v-1}\Big(\frac{1}{n}\sum_{i=1}^n\psi^p(X_i) 1\{\psi^p(X_i)>M^{1/v}\}\Big).
			\end{align*}
			
			Since the first term in the RHS of \eqref{eq:phi5} is bounded in $n$ by \eqref{eq:q}, to verify part (i) it suffices to show the following:
			\begin{eqnarray}
				\label{eq:phi5}\limsup_{K\to\infty}\limsup_{n\to\infty}\frac{1}{n}\log \P\Big(\sum_{i=1}^n\psi^p(X_i)>nK\Big)=&-\infty,\\
				\label{eq:phiii6}
				\limsup_{M\to\infty}\limsup_{n\to\infty}\frac{1}{n}\log \P\Big(\sum_{i=1}^n\psi^p(X_i)1\{\psi(X_i)>M^{1/v}\}>n\delta\Big)=&-\infty.
			\end{eqnarray}
			To this effect,
						{\color{black} Markov's inequality yields that for any $K>0$, 
      \begin{align*}
          \P(\sum_{i=1}^n\psi^p(X_i)>nK)= \P \Big(\exp\Big(\sum_{i=1}^n\psi^p(X_i)\Big)> e^{nK}\Big) \le  e^{-nK} \E\Big( e^{ \psi(X_1)^p}\Big)^n,
      \end{align*}
      which, on taking logarithm and dividing by $n$ yields 
      $$\frac{1}{n}\log \mathbb{P}\Big(\sum_{i=1}^n\psi^p(X_i)>nK\Big) \le - K+ \log \Big( \E e^{ \psi(X_1)^p}\Big),$$}
							the RHS of which converges to $-\infty$ as $K \rightarrow \infty$ using \eqref{eq:tailp}. This verifies \eqref{eq:phi5}. A similar calculation gives that for any $\delta,M,\lambda>0$,
							\begin{align*}
								\frac{1}{n}\log \mathbb{P}\left(\sum_{i=1}^n\psi^p(X_i)1\{\psi(X_i)>M^{1/v}\} \ge n\delta\right) \le - \lambda \delta+ \log \Big( \E e^{ \lambda\psi(X_1)^p\mathbbm{1}_{\psi^p(X_i) > M^{1/v}}}\Big),
							\end{align*}
							the RHS of which converges to $-\lambda \delta$ as $M \rightarrow \infty$ using \eqref{eq:tailp} and dominated convergence theorem, for every fixed $\lambda>0$. On letting $\lambda\to\infty$ this goes to $-\infty$, 
							which verifies \eqref{eq:phiii6}, and hence completes the proof of part (i).

							In a similar manner, starting from~\eqref{eq:ubasebd} with $\phi$ replaced by $\widetilde{\phi}_M$, the conclusion with $U_n({\bf X})$ and $U_{n,M}({\bf X})$ follows. We omit the details for brevity.

							(ii) Using \eqref{eq:hold+phim}, to show the desired conclusion it suffices to show the following:
							$$\sup_{\nu\in \mathcal{A}_K} \int \psi^pd\nu <\infty, \quad\lim_{M\to\infty}\sup_{\nu \in \mathcal{A}_K}\int \psi^p 1\{\psi>M^{1/v}\} d\nu=0.$$
							Both of these follow on invoking~\cref{lem:basicres} with $\eta=\psi$ and $\eta=\psi 1\{\psi>M^{1/v}\}$ respectively, on noting that
							$(\phi,\mu)$ satisfies \eqref{eq:tailp}.

						\end{proof}

						\begin{proof}[Proof of Lemma \ref{lem:Tcont}]
							
							(i) Let $\{\nu_n\}_{n\ge 1},\nu\in  \widetilde{\mathcal{M}}$ be such that $\nu_n$ converges to $\nu$ in the weak topology. 
						    \textcolor{black}{As $\lVert W^*\rVert_1<\infty$, therefore there exists continuous functions $\{W_{\ell}\}_{\ell\ge 1}$ on $[0,1]^v$ such that $\lVert W_{\ell}-W^*\rVert_1\to 0$. Given a function $\widetilde{W}\in L_1([0,1]^v)$ and a bounded continuous function $\phi:\mathcal{X}^v\mapsto \R$, define a functional $\widetilde{T}_{\widetilde{W},\phi}$ on $\widetilde{\mathcal{M}}$ by setting
          $$\widetilde{T}_{\widetilde{W},\phi}(\nu):=\E\left[\phi(B_1,\cdots,B_v) \widetilde{W}(A_1,\cdots,A_v)\right],$$
          where the expectation is over $\{(A_a,B_a)\}_{1\le a\le v}\stackrel{i.i.d.}{\sim}\nu$. With this definition, we have 
          $T_{W,\phi}(\nu)=\widetilde{T}_{W^*,\phi}(\nu)$.
          An application of the triangle inequality then gives:
							\begin{align*}
								&|T_{W,\phi}(\nu_n)-T_{W,\phi}(\nu)|\\
        =&|\widetilde{T}_{W^*,\phi}(\nu_n)-\widetilde{T}_{W^*,\phi}(\nu)|\\
        \le &|\widetilde{T}_{W^*,\phi}(\nu_n)-\widetilde{T}_{W_\ell,\phi}(\nu_n)|+|\widetilde{T}_{W_\ell,\phi}(\nu_n)-\widetilde{T}_{W_\ell,\phi}(\nu)|+|\widetilde{T}_{W_\ell,\phi}(\nu)-\widetilde{T}_{W,\phi}(\nu)|\\
								\le& |\widetilde{T}_{W_\ell,\phi}(\nu_n)-\widetilde{T}_{W_\ell,\phi}(\nu)|+ 2 \lVert \phi\rVert_{\infty}\lVert W_{\ell}-W^*\rVert_1.
							\end{align*}}
							The first term converges to $0$ as $n\to\infty$ using the definition of weak convergence and an application of the dominated convergence theorem, as both $W_{\ell}$ and $\phi$ are bounded continuous functions. The second term converges to $0$ as $\ell\to\infty$. This completes the proof of part (i).
							\\ 
							
							(ii)  Setting $\tilde{\phi}_\ell:=\phi-\phi^{(\ell)}$, by~\cref{lem:Tgraphon0} we have
							$$|T_{W,\phi^{(\ell)}}(\nu)-T_{W,\phi}(\nu)|=|T_{W,\tilde{\phi}_\ell}|\le \|W\|_{q\Delta}^{|E(H)|}\Big(\E_{\nu^{\otimes v}}|\tilde{\phi}_\ell(B_1,\ldots,B_v)|^p\Big)^{\frac{1}{p}} .$$
							Since $\|W\|_{q\Delta}<\infty$, it suffices to show that
							$$\lim_{\ell\to\infty}\sup_{\nu\in \mathcal{A}_K}\E_{\nu^{\otimes v}}|\tilde{\phi}_\ell(B_1,\ldots,B_v)|^p=0.$$
							But this follows on noting that $$\|\tilde{\phi}_\ell\|_\infty\le 2M, \quad \tilde{\phi}_\ell\stackrel{L^p(\mu)}{\longrightarrow}0,$$ and invoking~\cref{lem:basicres}, part (i) with $\eta=\tilde{\phi}_\ell$.

						\end{proof}
						
						\begin{proof}[Proof of Lemma \ref{lem:Tgraphon}]
							 Without loss of generality, assume $\|\phi\|_{\infty}\le 1$. Invoking~\cref{lem:Tgraphon0} with $\phi\equiv 1$ (as in~\eqref{eq:l1w}), we get that there exists $C>0$ such that \begin{align}\label{eq:pivoteq}
								\max(\max_{n\ge 1}\|W_n^*\|_1,\|W^*\|_1)\le C,
								\end{align} where $$W_n^*(x_1,\ldots,x_v):=\prod_{(a,b)\in E(H)}W_{Q_n}(x_a,x_b),\quad W^*(x_1,\ldots,x_v)=\prod_{(a,b)\in E(H)}W(x_a,x_b),$$ 
							
							Fixing $\varepsilon>0$, let $\mathcal{K}\subseteq\mathcal{X}$ be a compact set such that $\mu(\mathcal{K}^c)\le \varepsilon$. Fixing $\delta>0$, let $\widetilde{\phi}:\mathcal{K}^v\mapsto \R$ be a measurable function, such that
							$$\widetilde{\phi}({\bf x})=\sum_{i=1}^{k} \alpha_i \mathbbm{1}_{{\bf x} \in \prod_{a=1}^{v} R_{ia}}, \text{ and }\quad \sup_{{\bf x} \in \mathcal{K}^v}|\phi({\bf x})-\tilde{\phi}({\bf x})|\le \frac{\delta}{4C},$$
	{\color{black} where $\{R_{ia}\}_{i \in [k], a\in[v]}$ are measurable subsets of $\mathcal{X}$.} 						
       The existence of such a function follows from the Stone-Weierstrass Theorem, as $\mathcal{K}^v$ is compact, and the class of functions of the form $$\Big\{{\bf x}\mapsto \sum_{i=1}^k\alpha_i 1_{{\bf x}\in \prod_{a=1}^v R_{ia}}, k\in \N, (R_{ia})_{i\in [k], a\in [v]}\subset \mathcal{K}^{vk}\Big\}$$
							is closed under pointwise multiplication, contains constant functions, and separates points in $\mathcal{K}^v$. We note that both $\mathcal{K}$ and $\widetilde{\phi}(\cdot)$  depend on $\vep$, but we omit the dependence for notational simplicity. 
							
							For $i\in [k], a\in [v]$ and $\nu \in \mathcal{A}_K$, define a function $f_{ia,\nu}:[0,1]\mapsto [0,1]$ by setting $f_{ia,\nu}(x):= \mathbb{P}_\nu(B \in R_{ia}|A=x)$, where $(A,B)\sim \nu$. Then
			\begin{align*}T_{W_n,\widetilde{\phi}}(\nu)=&\sum_{i=1}^{k}\alpha_i\E\Bigg[\left(\prod_{(a,b)\in E(H)}W_n(A_a,A_b)\right)\left(\prod_{a=1}^{v} f_{ia}(A_a)\right)\Bigg]\\
			=&\sum_{i=1}^k \alpha_i t(H,W_n,{\mathbf f}_i),
			\end{align*}
							where ${\mathbf f}_i:=(f_{ia},a\in [v])$ is a map from $\mathcal{X}^v$ to $[-1,1]^v$. The above display gives
							\begin{align}\label{eqn:2bound}
						\notag	&	\sup_{\nu\in \widetilde{\mathcal{M}}}|T_{W_n,\widetilde{\phi}}(\nu)-T_{W,\widetilde{\phi}}(\nu)|\\
								\le&\Big( \sum_{i=1}^k |\alpha_i| \Big)\sup_{{\bf f}:\mathcal{X}^v\mapsto [-1,1]^v} |t(H,W_n,{\bf f})-t(H,W,{\bf f}) | \le \frac{\delta}{4},
							\end{align}
							for all $n$ large, by part (ii) of \cref{lem:gen_holder}. Also, since $\|\phi-\tilde{\phi}\|_\infty<\varepsilon$ on $\mathcal{K}^v$, extending $\tilde{\phi}$ to $\mathcal{X}^v$ by setting $\tilde{\phi}=0$ outside $\mathcal{K}^v$, we have 
							\begin{align*}
								\left \lvert T_{W_{Q_n},\phi}(\tilde{\mathfrak{L}}_n)-T_{W_{Q_n},\tilde{\phi}}(\tilde{\mathfrak{L}}_n)\right \rvert\le &\|W_n^*\|_1 \E_{\widetilde{\mathfrak{L}_n}}|\phi'|\\
								\le &C\Big(\frac{\delta}{4C}+\frac{1}{n^v}\sum_{i_1,\ldots,i_v=1}^n 1\{X_{i_r}\in \mathcal{K}^c\text{ for some }r\in [v]\}\Big)\\
								\le &\frac{\delta}{4}+\frac{Cv}{n}\sum_{i=1}^n 1\{X_i\in \mathcal{K}^c\}.
							\end{align*}
							Using the above display along with \eqref{eqn:2bound}~and~\eqref{eq:pivoteq}, we get
							$$\left\lvert T_{W_{Q_n},\phi}(\tilde{\mathfrak{L}}_n)-T_{W,\phi}(\tilde{\mathfrak{L}}_n)\right\rvert\le \frac{3\delta}{4}+\frac{2Cv}{n}\sum_{i=1}^n1\{X_i\in \mathcal{K}^c\},$$
							which gives
							\begin{align*}
								\P\Big(|T_{W_{Q_n},\phi}(\tilde{\mathfrak{L}}_n)-T_{W,\phi}(\tilde{\mathfrak{L}}_n)|>\delta\Big)\le &\P\left(\sum_{i=1}^n1\{X_i\in \mathcal{K}^c\}>\frac{\delta}{8Cv}\right).
							\end{align*}
							Since $\P(X_1\in \mathcal{K}^c)\le \varepsilon$, using standard Binomial concentration bounds give
							$$\limsup_{\varepsilon\to 0}\limsup_{n\to\infty}\frac{1}{n}\log \P\left(\sum_{i=1}^n1\{X_i\in \mathcal{K}^c\}>\frac{\delta}{8Cv}\right)=-\infty,
							$$ from which the desired conclusion is immediate.
							
						\end{proof}
						
						\begin{proof}[Proof of~\cref{lem:basicres}]
							
							\noindent \emph{Part (i).} Fix $\nu \in \mathcal{A}_K$ (as in~\eqref{eq:ak}) and define $h= \frac{d \ntw}{d \mu}$ (where $\ntw$ is as in~\eqref{eq:secmar}). Then with $\rho=\mathrm{Unif}[0,1]\otimes \mu$ as before, we have
			\begin{equation}\label{eq:firstineq}
			    D(\nu^{(2)}|\mu)\leq D(\nu|\rho)\le K.
			\end{equation}
			Then, for any $M_1,M_2>1$, setting $G:= \int |\eta|^pd\mu$ we have
			\begin{align}\label{eq:final_term}
				\notag	\int |\eta |^p d\ntw= & \int |\eta|^p h d\mu \\ 
				\notag	=& \int |\eta|^p h \mathbbm{1}_{h \le M_1} d\mu+ \int |\eta|^p h \mathbbm{1}_{h > M_1}\mathbbm{1}_{|\eta|\le M_2} d\mu +\int |\eta|^p h \mathbbm{1}_{h > M_1}\mathbbm{1}_{|\eta|> M_2} d\mu\\
				\notag	 \le& M_1 G + \frac{M^p_2}{\log M_1} \int h \log h  \mathbbm{1}_{h > M_1} d\mu+\int |\eta|^p h \mathbbm{1}_{h > M_1}\mathbbm{1}_{|\eta|> M_2} d\mu\\
				\le& M_1G + \frac{M^p_2}{\log M_1} (K+4)+\int |\eta|^p h \mathbbm{1}_{h > M_1}\mathbbm{1}_{|\eta|> M_2} d\mu.
			\end{align}
			Here the last inequality uses the bound
			\begin{align*}
				\int h\log h 1\{h>M_1\} d\mu\le &D(\nu^{(2)}|\mu)-\int h\log h 1\{h\le 1\} d\mu\\
				\le &K-\sum_{i=1}^\infty \int h\log h 1\Big\{e^{-i}< h\le e^{-(i-1)}\Big\} d\mu\\
				\le &K+\sum_{i=1}^\infty i e^{-(i-1)}\mu\Big(e^{-i}< h\le e^{-(i-1)}\Big)\\
				\le &K+\sum_{i=1}^\infty ie^{-(i-1)}\le K+4.
			\end{align*}
			The second inequality in the above display uses~\eqref{eq:firstineq}. 
			To bound the third term in the RHS of \eqref{eq:final_term}, first note that for any $y>0$ we have
			$$\sup_{x\in \R}\{xy-e^x\}=y\log y-y.$$
			Thus, for any $x\in \R$ and $y>0$ we have the inequality
			$$xy\le e^x+y\log y-y\le e^x+y\log y.$$
	
	Using this, for any $\lambda>1$ we have
			\begin{align}\label{eq:b3}
				\notag\int |\eta|^p h \mathbbm{1}_{h > M_1}\mathbbm{1}_{|\eta|> M_2} d \mu  \le& \int \left(e^{\lambda |\eta|^p}+ \frac{h}{\lambda} \log h \right)\mathbbm{1}_{h > M_1}\mathbbm{1}_{|\eta|> M_2} d\mu\\
				\le &\int e^{\lambda |\eta|^p}\mathbbm{1}_{|\eta|> M_2}d\mu+\frac{K+4}{\lambda}.
			\end{align}
			Now, first pick $\lambda=\frac{K+4}{G}$ such that the second term in the RHS of \eqref{eq:b3} equals $G$. Since $\int e^{\alpha |\eta|^p}d\mu \le \mathfrak{G}(\alpha)$ for all $\alpha>0$, we have
			\begin{align}\label{eq:b4}
				\int e^{\lambda |\eta|^p}\mathbbm{1}_{|\eta|> M_2}d\mu \le e^{-\lambda M^p_2}\int e^{(\lambda+1) |\eta|^p}d\mu \le e^{-\lambda M^p_2}\mathfrak{G}(\lambda+1).
			\end{align}
			Choose $M_2:=\left(-\frac{1}{\lambda}\ln\frac{G}{\mathfrak{G}(\lambda+1)}\right)^{1/p}$, ensuring that the RHS of \eqref{eq:b4} equals $G$.
			Finally choose $M_1=\exp\Big(\frac{M_2^p(K+4)}{G}\Big)$, such that the second term in the RHS of \eqref{eq:final_term} equals $G$. With all these choices, combining \eqref{eq:final_term}, \eqref{eq:b3} and \eqref{eq:b4} gives
			\begin{align*}
				& \sup_{\nu \in \mathcal{A}_K} \int |\eta|^pd\ntw \le (M_1+3)G<\infty.
			\end{align*}
			From this the desired conclusion follows with $L=M_1+3$. Here $M_1$ depends on $K, \mathfrak{G}$ and $G$, but this is not a problem, as $G\le \mathfrak{G}(1)$. 
			
							\vspace{0.1in}
							
							\noindent \emph{Part (ii).} For this part, an application of the triangle inequality, coupled with the observation $$\{\nu:D(\nu|\rho)<\infty\}=\cup_{K\geq 0}\mathcal{A}_K$$ implies that it suffices to show~\eqref{eq:welldef} holds for $T_{|W|,|\phi|}(\cdot)$ instead of $T_{W,\phi}(\cdot)$. Towards this direction, using~\cref{lem:Tgraphon0} and~\eqref{eq:tailp}, we get:
							$$\sup_{\nu\in\mathcal{A}_K}T_{|W|,|\phi|}\leq \lVert W\rVert_{q \Delta}^{|E(H)|}\left(\E_{\ntw}\psi(X_1)^p\right)^{\frac{v}{p}}.$$
							A direct application of~\cref{lem:basicres}, part (i) with $\eta=\psi$, completes the proof.
						\end{proof}

						\begin{proof}[Proof of~\cref{lmm:un_vn_same}] 
						 We begin with the following simple observation:
						\begin{align}\label{eq:unsame2}
						\limsup\limits_{n\to\infty} \,\, \lVert W_{Q_n}\rVert_{q\Delta}<\infty \quad \implies \quad \limsup\limits_{n\to\infty} \,\, \lVert W_{1\vee Q_n}\rVert_{q\Delta}<\infty.
						\end{align}
						Fix $\tilde{q}\in (1,q)$, and let $\delta:=\frac{q}{\tilde{q}}-1>0$, and  define 		$Q^*_{n,\delta}(i,j):=1\vee |Q_n(i,j)|^{1+\delta}$ for all $i,j$. 
						Then using~\cref{lem:Tgraphon0} with $\phi\equiv 1$ gives 
	\begin{align}\label{eq:unsame1}	
	& \frac{1}{n^v} \sum_{(i_1,\ldots,i_v)\in [n]^v}\prod_{(a,b)\in E(H)}|Q^*_{n,\delta}(i_a,i_b)|\nonumber\\
	\le& \|W_{Q^*_{n,\delta}}\|^{|E(H)|}_{\tilde{q}\Delta}=\lVert W_{1\vee Q_{n}}\rVert_{\tilde{q}(1+\delta)\Delta}^{(1+\delta)|E(H)|}=\lVert W_{1\vee Q_{n}}\rVert_{q\Delta}^{(1+\delta)|E(H)|},
					\end{align}
					where we use the fact that  $\tilde{q}(1+\delta)=q$.
						{\color{black} For $M>1$, define $Q_{n,M}(i,j):=Q_n(i,j)\mathbf{1}(|Q_n(i,j)|\leq M)$ and $\Delta_{n,M}(i,j)= Q_{n}(i,j)-Q_{n,M}(i,j)$for all $i,j$. In the sequel, we will use the following simple inequalities:
					\begin{align}\label{eq:unsame3}
					|\Delta_{n,M}(i,j)|\leq \frac{1}{M^{\delta}}Q^*_{n,\delta}(i,j),\quad \max \{|Q_n(i,j)|,|Q_{n,M}(i,j)|\}\leq Q_{n,\delta}^*(i,j).
					\end{align}
					Then by writing $Q_n(i,j)=Q_{n,M}(i,j)+\Delta_{n,M}(i,j)$, we get
		\begin{align}\label{eq:unsame4}					   &\;\;\;\;\frac{1}{n^v}\Bigg\lvert \sum_{(i_1,\ldots,i_v)\in [n]^v} \phi(x_{i_1},\ldots ,x_{i_v}) \Bigg(\prod_{(a,b)\in E(H)}Q_n(i_a,i_b) - \prod_{(a,b)\in E(H)}Q_{n,M}(i_a,i_b) \Bigg)\Bigg\rvert\nonumber \\
  & \le \frac{L}{n^v} \sum_{(i_1,\ldots,i_v)\in [n]^v} \Bigg\lvert \Bigg(\prod_{(a,b)\in E(H)}\Big[Q_{n,M}(i_a,i_b) +\Delta_{n,M}(i_a,i_b)\Big]- \prod_{(a,b)\in E(H)}Q_{n,M}(i_a,i_b) \Bigg) \Bigg\rvert \nonumber\\
  &= \frac{L}{n^v} \sum_{(i_1,\ldots,i_v)\in [n]^v} \Bigg\lvert \sum_{S \subseteq E(H), S \neq \emptyset} \prod_{(a,b) \in S} \Delta_{n,M}(i_a,i_b) \prod_{(a,b) \in E(H) \setminus S^c} Q_{n,M}(i_a,i_b)  \Bigg\rvert \nonumber\\
  &\leq \frac{2^{|E(H)|}}{M^{\delta}}\frac{L}{n^v} \ \sum_{(i_1,\ldots,i_v)\in [n]^v}\prod_{(a,b)\in E(H)}Q^*_{n,\delta}(i_a,i_b)\nonumber \\ &\leq \frac{2^{|E(H)|}L}{M^{\delta}}\lVert W_{Q_{n,\delta}^*}\rVert_{\tilde{q}\Delta}^{|E(H)|}.
						\end{align}
					The inequality in the fourth line in the above display uses~\eqref{eq:unsame3} and the last inequality uses \eqref{eq:unsame1}.} Combining \eqref{eq:unsame2}, \eqref{eq:unsame1} and \eqref{eq:unsame4} we have 
		\begin{align}\label{eq:unsame50}		 &\lim\limits_{M\to\infty}\limsup\limits_{n\to\infty}\sup_{(x_1,\ldots,x_n)\in \mathcal{X}^n}\frac{1}{n^v}\Bigg\lvert \sum_{(i_1,\ldots,i_v)\in [n]^v }\phi(x_{i_1},\ldots ,x_{i_v})\Bigg(\prod_{(a,b)\in E(H)}Q_n(i_a,i_b) - \nonumber \\ &\qquad\qquad\qquad \prod_{(a,b)\in E(H)}Q_{n,M}(i_a,i_b) \Bigg)\Bigg\rvert=0.
					\end{align}
					A similar argument with $[n]^v$ replaced by
				 $\mathcal{S}_{n,v}$ 
				gives	\begin{align}\label{eq:unsame5}		 &\lim\limits_{M\to\infty}\limsup\limits_{n\to\infty}\sup_{(x_1,\ldots,x_n)\in \mathcal{X}^n}\frac{1}{n^v}\Bigg\lvert \sum_{(i_1,\ldots,i_v)\in \mathcal{S}(n,v) }\phi(x_{i_1},\ldots ,x_{i_v})\Bigg(\prod_{(a,b)\in E(H)}Q_n(i_a,i_b) - \nonumber \\ &\qquad\qquad\qquad \prod_{(a,b)\in E(H)}Q_{n,M}(i_a,i_b) \Bigg)\Bigg\rvert=0.
					\end{align}
					Finally, note that there exists positive real constant $C_v$ such that $\left|[n]^v\setminus \mathcal{S}(n,v)\right|\leq C_v n^{v-1}$. Consequently, for any fixed $M>1$, we have:
					\begin{align*}					    & \frac{1}{n^v}\Bigg\lvert \sum_{(i_1,\ldots,i_v)\in [n]^v }\phi(x_{i_1},\ldots ,x_{i_v})\prod_{(a,b)\in E(H)}Q_{n,M}(i_a,i_b) \\ &\quad \quad- \sum_{(i_1,\ldots,i_v)\in \mathcal{S}(n,v)}\phi(x_{i_1},\ldots ,x_{i_v})\prod_{(a,b)\in E(H)}Q_{n,M}(i_a,i_b)\Bigg\rvert\\ &\leq\frac{L}{n^v} \sum_{(i_1,\ldots ,i_v)\in [n]^v\setminus \mathcal{S}(n,v)} \prod_{(a,b)\in E(H)} |Q_{n,M}(i_a,i_b)| \leq  \frac{L M^{|E(H)|}C_v}{n},
					\end{align*}
					which gives
						\begin{align*}				    &\limsup_{n\to\infty}\sup_{(x_1,\ldots,x_n)\in \mathcal{X}^v} \frac{1}{n^v}\Bigg\lvert \sum_{(i_1,\ldots,i_v)\in [n]^v }\phi(x_{i_1},\ldots ,x_{i_v})\prod_{(a,b)\in E(H)}Q_{n,M}(i_a,i_b) \\ &\quad \quad- \sum_{(i_1,\ldots,i_v)\in \mathcal{S}(n,v)}\phi(x_{i_1},\ldots ,x_{i_v})\prod_{(a,b)\in E(H)}Q_{n,M}(i_a,i_b)\Bigg\rvert=0.
						\end{align*}
					Combining the above display along with~\eqref{eq:unsame50}~and~\eqref{eq:unsame5} completes the proof.

						
						

						\end{proof}
						
						\begin{proof}[Proof of Lemma \ref{lem:quadratic}]
							We will assume without loss of generality throughout the proof that $\lVert\phi\rVert_{\infty}\leq 1$. Also let $U\sim\mathrm{U}[0,1]$.
							\\
							
							(i) \emph{(Under~\eqref{eq:pp}).} Note that $T_{|W|,|\phi|}(\nu)\leq \lVert W\rVert_1<\infty$ which implies $T_{W,\phi}(\cdot)$ is well-defined. Fixing $\varepsilon>0$, invoking Stone-Weierstrass Theorem as in the proof of Lemma \ref{lem:Tgraphon}, we obtain the existence of a function $\widetilde{\phi}$ of the form
							\begin{equation}\label{eq:quadclaim}\widetilde{\phi}(x,y)=\sum_{i=1}^k\alpha_i 1\{x\in R_{i1}, y\in R_{i2}\},\end{equation}
							where $R_{i1}, R_{i2}$ are measurable subsets of $\mathcal{X}$, such that $\|\phi-\widetilde{\phi}\|_\infty<\varepsilon$. 
							This gives
							$$|T_{W_n,\phi}(\nu)-T_{W_n,\widetilde{\phi}}(\nu)|\le \varepsilon \|W_n\|_1,\quad |T_{W,\phi}(\nu)-T_{W,\widetilde{\phi}}(\nu)|<\varepsilon \|W\|_1,$$
							which on using triangle inequality gives
							\begin{align}\label{eq:triangle}
								|T_{W_n,\phi}(\nu)-T_{W,\phi}(\nu)|\le |T_{W_n,\widetilde{\phi}}(\nu)-T_{W,\widetilde{\phi}}(\nu)|+\varepsilon\Big( \|W\|_1+ \|W_n\|_1\Big).
							\end{align}
							Finally, using the form of $\widetilde{\phi}$, setting $f_{ia,\nu}(u):=\P_\nu(V\in R_{ia}|U=u)$, for $i\in [k]$ and $a=1,2$, we get
							\begin{align}\label{eq:form1}
								\notag|T_{W_n,\widetilde{\phi}}(\nu)-T_{W,\widetilde{\phi}}(\nu)|\le &\sum_{i=1}^k |\alpha_i|\Big| \int_{[0,1]^2} (W_n(x,y)-W(x,y))f_{i1,\nu}(x) f_{i2}(y)dxdy\Big|\\
								\le &d_\square(W_n,W)\Big(\sum_{i=1}^k |\alpha_i|\Big).
							\end{align}
							Combining \eqref{eq:triangle} and \eqref{eq:form1} the conclusion of part (i) follows using \eqref{eq:pp}, as $\varepsilon>0$ is arbitrary.
							\\
							
							\emph{(Under~\eqref{eq:pp1}).} Label the vertices of $H=K_{1,v-1}$, such that the central vertex gets label $1$. For any $\widetilde{W}\in\mathcal{W}$ such that $\E\mar_{\widetilde{W}}(U)<\infty$, 
							integrating first over the variables $x_2,\ldots ,x_v$ 
							we have
							\begin{align}\label{eq:ppi1}
							\int_{[0,1]^v}\prod_{i=2}^v |\widetilde{W}(x_1,x_j)|\prod_{j=1}^v \,dx_i\leq \E \mar^{v-1}_{\widetilde{W}}(U).
							\end{align}
							Once again, fixing $\vep>0$, invoking Stone-Weierstrass Theorem, we obtain the existence of $\widetilde{\phi}$ of the form
							\begin{align}\label{eq:ppi2}
							\widetilde{\phi}(x_1,\ldots ,x_v)=\sum_{i=1}^k \alpha_i\prod_{j=1}^v 1\{x_j\in R_{ij}\},
							\end{align}
							where $R_{i1},\ldots ,R_{iv}$ are measurable subsets of $\mathcal{X}$, such that $\lVert \phi-\widetilde{\phi}\rVert_{\infty}\leq \vep$. Using~\eqref{eq:ppi1} with $\widetilde{W}\in \{W_{Q_n}, W\}$, triangle inequality gives
							\begin{small}
							\begin{align}\label{eq:ppi3}			\left|T_{W_{Q_n},\phi}(\nu)-T_{W,\phi}(\nu)\right|\leq \left|T_{W_{Q_n},\widetilde{\phi}}(\nu)-T_{W,\widetilde{\phi}}(\nu)\right|+\vep \E\left( \mar^{v-1}_{W_{Q_n}}(U)+ \mar^{v-1}_{W}(U)\right).
							\end{align}
							\end{small}

As in the proof of~\cref{lem:Tgraphon}, for $i\in [k], a\in [v]$ and $\nu\in \widetilde{\cM}$  define a function $f_{ia,\nu}:[0,1]\mapsto [0,1]$ by setting $f_{ia,\nu}(x):= \mathbb{P}_\nu(B \in R_{ia}|A=x)$, where $(A,B)\sim \nu$. Then we have
			\begin{align*}T_{W_{Q_n},\widetilde{\phi}}(\nu)=&\sum_{i=1}^{k}\alpha_i\E\Bigg[\left(\prod_{(a,b)\in E(H)}W_{Q_n}(A_a,A_b)\right)\left(\prod_{a=1}^{v} f_{ia}(A_a)\right)\Bigg]\\
			=&\sum_{i=1}^k \alpha_i t(H,W_{Q_n},{\mathbf f}_i),
			\end{align*}
		and where ${\bf f}_i:=(f_{ia})_{a\in [b]}$. Consequently, a telescopic calculation gives
							\begin{align}\label{eq:form}
						&\;\;\;\;\left|T_{W_{Q_n},\widetilde{\phi}}(\nu)-T_{W,\widetilde{\phi}}(\nu)\right|\nonumber \\ 
						&\le \sum_{i=1}^k |\alpha_i|\Big| \int_{[0,1]^v} \left(\prod_{a=2}^v W_{Q_n}(x_1,x_a)-\prod_{a=2}^v W(x_1,x_a)\right)\prod_{a=1}^v f_{ia,\nu}(x_a)\, dx_a\Big|\nonumber \\
							\notag	&\le \sum_{i=1}^k |\alpha_i|\sum_{a=2}^{v}\Bigg|\int_{[0,1]^2} \Big(W_{Q_n}(x_1,x_{a})-W(x_1,x_{a})\Big)f_{i1,\nu}(x_1)f_{ia,\nu}(x_{ \ell})\,dx_1\,dx_{a}\nonumber \\ 
						\nonumber	&\quad\quad \int_{[0,1]^{v-2}}\left(\prod_{r=2}^{a-1} W(x_1,x_{r})f_{ir,\nu}(x_{r})\,dx_{r}\right)\left(\prod_{s=a+1}^{v} W_{Q_n}(x_1,x_s)f_{is,\nu}(x_{s})\,dx_{s}\right)\Bigg|\\
								&= \sum_{i=1}^k |\alpha_i|\sum_{a=2}^{v}\Bigg|\int_{[0,1]^2} \left(W_{Q_n}(x_1,x_{a})-W(x_1,x_{a})\right)f_{i1,\nu}(x_1)f_{ia,\nu}(x_{ \ell})\,dx_1\,dx_{a}\nonumber \\ &\quad\quad \left(\prod_{r=2}^{a-1} \mathcal{R}_{ir}(x_1;\nu)\right)\left(\prod_{s=a+1}^{v}\mathcal{R}^{(n)}_{is}(x
								_a;\nu) \right)\Bigg|
							\end{align}
							where $\mathcal{R}^{(n)}_{ia}(x;\nu):=\int_{[0,1]} W_{Q_n}(x,y)f_{ia,\nu}(y)\,dy$ for $i\in [k], a\in [v]$, and $\mathcal{R}_{ia}(x;\nu)$ is defined similarly with $W_{Q_n}$ replaced by $W$. Observe that 
							\begin{align}\label{eq:ppi4}
							|\mathcal{R}_{ia}^{(n)}(x;\nu)|\leq \mar_{W_{Q_n}}(x),\quad \quad |\mathcal{R}_{ia}(x;\nu)|\leq \mar_{W}(x).
							\end{align}
						Intersecting with the set $\max(\mar_{W_{Q_n}}(x_1), \mar_{W}(x_a))> L$ and its complement, using~\eqref{eq:form} and~\eqref{eq:ppi4}, for every fixed $L>0$ we have
							\begin{align*} &\;\;\;\;\left|T_{W_{Q_n},\widetilde{\phi}}(\nu)-T_{W,\widetilde{\phi}}(\nu)\right|\nonumber \\
							&\leq \left(\sum_{i=1}^k |\alpha_i|\right)\bigg[\sum_{a=2}^{v}\E\bigg[\mar_{W_{Q_n}}^{v-a}(U)\mar_{W}^{a-2}(U)\left(\mar_{W_{Q_n}}(U)+\mar_{W}(U)\right)\bigg(1(\mar_{W_{Q_n}}(U)>L)\nonumber \\&\quad\quad+1(\mar_{W}(U)>L)\bigg)\bigg] +L^{v-2}d_{\square}\left(W_{Q_n},W\right)\bigg]. 
							\end{align*}
							Observe that the above bound is free of $\nu$, and consequently by using the assumption $d_{\square}(W_{Q_n},W)\to 0$, we have:
							\begin{align}\label{eq:ppi5}
							    &\;\;\;\;\limsup_{n\to\infty}\sup_{\nu\in\widetilde{\mathcal{M}}} \left|T_{W_{Q_n},\widetilde{\phi}}(\nu)-T_{W,\widetilde{\phi}}(\nu)\right|\nonumber \\&\leq \left(\sum_{i=1}^k |\alpha_i|\right)\bigg[\sum_{a=2}^{v}\limsup_{n\to\infty}\E\bigg[\mar_{W_{Q_n}}^{v-a}(U)\mar_{W}^{a-2}(U)\left(\mar_{W_{Q_n}}(U)+\mar_{W}(U)\right)\nonumber \\&\quad\quad\bigg(1(\mar_{W_{Q_n}}(U)>L)+1(\mar_{W}(U)>L)\bigg)\bigg]\bigg].
							\end{align}
							Now, for fixed $a$, the summation in the right hand side of~\eqref{eq:ppi5} splits canonically into four summands. We will only bound one of them as the other three can be bounded similarly. Define $\tp:=(v-1)/(v-a+1)$, $\tq:=(v-1)/(a-2)$, and note that $\tp^{-1}+\tq^{-1}=1$. H\"{o}lder's inequality gives
							\begin{align*}
							&\;\;\;\; \E\left[\mar_{W_{Q_n}}^{v-\ell+1}(U)\mar_W^{\ell-2}(U) 1(\mar_{W_{Q_n}}(U)>L)\right]\\ &\leq  \left(\E\left[\mar_{W_{Q_n}}^{(v-\ell+1)\tp}(U)1(\mar_{W_{Q_n}}(U)>L)\right]\right)^{\frac{1}{\tp}}\left(\E\left[\mar_W^{(\ell-2)\tq}(U)\right]\right)^{\frac{1}{\tq}}\\ &=\left(\E\left[\mar_W^{v-1}(U)\right]\right)^{\frac{1}{\tq}}\E\left[\mar_{W_{Q_n}}^{v-1}(U)1(\mar_{W_{Q_n}}(U)>L)\right].
							\end{align*}
							The RHS above converges to $0$ on letting $L\to\infty$, using~\eqref{eq:pp1}. Combining the above observation with~\eqref{eq:ppi5}, we get:
							\begin{align}\label{eq:ppi6}
							\limsup_{L\to\infty}\limsup_{n\to\infty}\sup_{\nu\in\widetilde{\mathcal{M}}}\left|T_{W_{Q_n},\widetilde{\phi}}(\nu)-T_{W,\widetilde{\phi}}(\nu)\right|=0.
							\end{align}
							Combining the above display with~\eqref{eq:ppi3} completes the proof, as $\vep>0$ is arbitrary.
							\\
							
							\emph{(Under~\eqref{eq:pp2})} To begin, use~\eqref{eq:pp2} to note the existence of a finite positive constant $C$ such that \begin{align}\label{eq:mathfrak}
							\|\mathfrak{R}_n\|_\infty\le C,\text{ where }\mathfrak{R}(x):=\max\left(|\mar_{W_{Q_n}}(x)|,|\mar_{W}(x)|\right).
							\end{align}
							We now claim that under~\eqref{eq:ppi2}, for any tree $\widetilde{H}$ with  vertex set $[k]$ any $a_0\in [k]$, and any $\{e_{ab}\}_{(a,b)\in E(\widetilde{H})}\in \{0,1\}^{k-1}$, 
							the following holds:
							\begin{equation}\label{eq:pp21}
							    \int_{[0,1]^{k-1}} \prod_{(a,b)\in E(\widetilde{H})}
						\big|W_{Q_n}^{e_{ab}}(x_a,x_b)\big|^{e_{ab}}\big|W(x_a,x_b)\big|^{1-e_{ab}}\prod_{a\neq a_0}\,dx_a\leq C^{k-1}.					
							    \end{equation}
							We prove the claim by induction. If $k=2$, then $\widetilde{H}=K_2$ is an edge, and so the LHS of \eqref{eq:pp21} is one of the following: $$\{\mar_{W_{Q_n}}(x),\quad \mar_{W_{Q_n}}(y),\quad \mar_{W}(x),\quad \mar_{W}(x)\}.$$
							All terms in the above display are bounded by $C$, by~\eqref{eq:mathfrak}, and so \eqref{eq:pp21} holds for $k=2$.
						Assume \eqref{eq:pp21} holds for some value of $k\ge 2$. Proceeding to verify the result for a tree $\widetilde{H}$ with $k+1$ vertices, given any $a_0\in \widetilde{H}$, there exists $\ta_0\neq a_0$ such that $\ta_0$ is a leaf node of $\widetilde{H}$ (this holds since $\widetilde{H}$ is not an edge). Thus there exists a node $\tb_0$ in the vertex set of $\widetilde{H}$ such that $(\ta_0,\tb_0)\in E(\widetilde{H})$ is the only edge in $\widetilde{H}$ connected to $\ta_0$. Therefore, by integrating over $x_{\ta_0}$ first, we get:
							\begin{align*}
							   &\;\;\;\; \int_{[0,1]^{k-1}} \prod_{(a,b)\in E(\widetilde{H})} \big|W_{Q_n}(x_a,x_b)\big|^{e_{ab}}\big|W(x_a,x_b)\big|^{1-e_{ab}}\prod_{j\in [k]/ a_0}\,dx_j\\ &\leq C\left(\int_{[0,1]^{k-2}} \prod_{(a,b)\in E(\widetilde{H}(\ta_0))} \big|W_{Q_n}(x_a,x_b)\big|^{e_{ab}}\big|W(x_a,x_b)\big|^{1-e_{ab}}\prod_{j\in [k]\{a_0,\ta_0\}}\,dx_j\right),			\end{align*}
          where $\tH(\ta_0)$ is the sub-tree of $\tH$ formed be removing the vertex $\ta_0$, and all the edges connected to $\ta_0$.
							   As $\widetilde{H}(\ta_0)$ is a tree with $k-1$ vertices, we can bound the term in within the parentheses by $C^{k-2}$ using the induction hypothesis. This completes the induction step and proves~\eqref{eq:pp21}. It further shows that 
          \begin{align}\label{eq:finite_big}
          \max\Big(\|W_n^*\|_1, \|W\|_1\Big)\le  C^{v-1}. 
         \end{align}
          and so
          $T_{W_{Q_n},\phi}(\nu)$ and $T_{W,\phi}(\nu)$ are finite and well-defined.
							 \\
							 
							Let $\widetilde{\phi}$ be as in~\eqref{eq:ppi2}. By using the triangle inequality with~\eqref{eq:finite_big}, we get
							\begin{equation}\label{eq:pp22}
							\left|T_{W_{Q_n},\phi}(\nu)-T_{W,\phi}(\nu)\right|\leq \left|T_{W_{Q_n},\widetilde{\phi}}(\nu)-T_{W,\widetilde{\phi}}(\nu)\right|+2\vep C^{v-1}.
							\end{equation}
							Consider any arbitrary edge  $(a^*,b^*)\in E(H)$. Then $H\setminus (a^*,b^*)$ splits into two disjoint trees, one containing the vertex $a^*$ and the other containing the vertex $b^*$ (it is possible that one of these components is an isolated vertex). Let $H_{a^*}$ and $H_{b^*}$ denote these two connected components respectively. With $\{f_{ia}(\nu)\}_{i\in [k], a\in [v]}$ as before, by repeating computations similar to~\eqref{eq:form} we get
							\begin{align}\label{eq:pp23}
							    &\;\;\;\;\left|T_{W_{Q_n},\widetilde{\phi}}(\nu)-T_{W,\widetilde{\phi}}(\nu)\right|\nonumber\\ &\leq \sup_{\substack{\small{(a^*,b^*)\in E(H)},\\ \small{\mathbf{e}\in \{0,1\}^{|E(H)/(a^*,b^*)|}}}}v \sum_{i=1}^k |\alpha_i|\Bigg|\int\limits_{[0,1]^2}\Big(W_{Q_n}(x_{a^*},x_{b^*})-W(x_{a^*},x_{b^*})\Big)f_{ia^*,\nu}\big(x_{a^*}\big) f_{ib^*,\nu}(x_{b^*})\nonumber \\&\;\;\; \int_{[0,1]^{| V(H_{a^*})\setminus a^*|}}\prod_{(a,b)\in E(H_{a^*})} W_{Q_n}^{e_{ab}}(x_a,x_b)W^{1-e_{ab}}(x_a,x_b)\prod_{[0,1]^{| V(H_{a^*})\setminus a^*|}} f_{ij,\nu}(x_j)\,dx_j\nonumber \\&\;\;\; \int_{[0,1]^{| V(H_{b^*})\setminus b^*|}}\prod_{(a,b)\in E(H_{b^*})} W_{Q_n}^{e_{ab}}(x_a,x_b)W^{1-e_{ab}}(x_a,x_b)\prod_{[0,1]^{| V(H_{b^*})\setminus b^*|}} f_{ij,\nu}(x_j)\,dx_j\Bigg|.
							\end{align}

							As $H_{a^*}$ and $H_{b^*}$ are  trees with $|V(H_{a^*})|$ and $|V(H_{b^*})|$ vertices, by applying~\eqref{eq:pp21} with $\widetilde{H}=H_{a^*},H_{b^*}$ and $a_0=a^*,b^*$, we get:
							\begin{small}\begin{align*}\bigg| \int\limits_{[0,1]^{| V(H_{a^*})\setminus a^*|}}\prod_{(a,b)\in E(H_{a^*})} W_{Q_n}^{e_{ab}}(x_a,x_b)W^{1-e_{ab}}(x_a,x_b)\prod_{[0,1]^{| V(H_{a^*})\setminus a^*|}} f_{ij,\nu}(x_j)\,dx_j\bigg|\leq C^{|V(H_{a^*})|-1},\\
							\bigg| \int\limits_{[0,1]^{| V(H_{b^*})\setminus b^*|}}\prod_{(a,b)\in E(H_{b^*})} W_{Q_n}^{e_{ab}}(x_a,x_b)W^{1-e_{ab}}(x_a,x_b)\prod_{[0,1]^{| V(H_{b^*})\setminus b^*|}} f_{ij,\nu}(x_j)\,dx_j\bigg|\leq C^{|V(H_{b^*})|-1}.
       \end{align*}\end{small}
							Further the left hand sides of the inequalities above, without the modulus, are only functions of $x_{a^*}$ and $x_{b^*}$ respectively. 
							Using the definition of strong cut in~\cref{def:defirst}, the above display implies:
							$$\sup_{\nu\in\widetilde{\mathcal{M}}}\left|T_{W_{Q_n},\widetilde{\phi}}(\nu)-T_{W,\widetilde{\phi}}(\nu)\right|\leq v C^{v-2} d_{\square}(W_{Q_n},W)\sum_{i=1}^k |\alpha_i|\to 0,$$
							using the assumption that $d_{\square}(W_{Q_n},W)\to 0$. Combining the above display with~\eqref{eq:pp22} completes the proof.
							\\
							
							(ii)  \emph{(Under~\eqref{eq:pp})} This follows directly by combining~\cref{lem:Tcont} part (i) with~\eqref{eq:pp}.
							\\
							
							\emph{(Under~\eqref{eq:pp1})} This follows directly by combining~\cref{lem:Tcont} part (i) with~\eqref{eq:pp1}, on invoking~\eqref{eq:ppi1} with $\widetilde{W}=W$.
							\\
							
							\emph{(Under~\eqref{eq:pp2})}  This follows directly by combining~\cref{lem:Tcont} part (i) with~\eqref{eq:pp2}, on invoking~\eqref{eq:finite_big}.			
							\end{proof}
						
						\begin{proof}[Proof of~\cref{lem:contain}]	
							Let us start by assuming the contrary. This implies that there exists $\alpha$ and a sequence of measures $\{\nu_k\}_{k\geq 1}$ such that $\nu_k\in\mathcal{A}^*_{\alpha}$ but $D(\nu_k|\rho)\to\infty$. Without loss of generality, assume $\theta\geq 0$. Since $D(\nu_k|\rho)-\theta T_{W,\phi}(\nu_k)\le \alpha$, we must have $T_{W,\phi}(\nu_k)\to\infty$. Take any $\theta'>\theta$. Then
						\begin{align*}	\theta'T_{W,\phi}(\nu_k)-D(\nu_k|\rho)&=(\theta T_{W,\phi}(\nu_k)-D(\nu_k|\rho))+(\theta'-\theta)T_{W,\phi}(\nu_k)\\ &\geq -\alpha+(\theta'-\theta)T_{W,\phi}(\nu_k)\to\infty,
						\end{align*}
							which contradicts the conclusion from~\cref{lem:gibbs} part (i). This completes the proof. 
						\end{proof}

						\begin{proof}[Proof of \cref{rem:ex}]

			 \begin{itemize}
			     \item {$d_\square(W_{Q_n},W)=o_P(1).$}

			     To begin, any $S,T\subseteq[n]$ we have
$$\sum_{i\in S, j\in T}\Big(B_{ij}-\frac{1}{2}\Big)\left(\frac{ij}{n^2}\right)^{-\alpha}={\rm subg}\Big(\frac{1}{4}\sum_{i\in S, j\in T}\left(\frac{ij}{n^2}\right)^{-2\alpha}\Big)\le {\rm subg}(C n^{2}), $$
for some finite positive constant $C$ free of $n$.  Using this, standard concentration and union bounds gives
$$\P\left(\max_{S,T\subseteq[n]}\Big(Q_n(i,j)-\widetilde{Q}_n(i,j)\Big)>n^2\delta\right)\le \sum_{S,T\subseteq [n]} 2 e^{-\frac{n^2\delta^2}{C}}=2^{4n+1}e^{-\frac{n^2\delta^2}{C}},$$
 where $\widetilde{Q}_n(i,j)=\frac{1}{2}\Big(\frac{ij}{n^2}\Big)^{-\alpha}.$ 
 This gives
$$d_\square(W_{Q_n}, W_{\widetilde{Q}_n})=\frac{1}{n^2}\max_{S,T\subseteq[n]}\Big|\sum_{i\in S, j\in T}(Q_n(i,j)-\widetilde{Q}_n(i,j))\Big|=o_P(1),$$
and so it suffices to show that $d_\square(\widetilde{W}_{Q_n},W)\to 0$. But this follows on using monotonicity of the function $x\mapsto x^{-\alpha}$ on $[0,1]$ to note that
\begin{align}\label{eq:l1_act}\|W_{\widetilde{Q}_n}-W\|_1=\int_{[0,1]^2} (xy)^{-\alpha} dxdy-\frac{1}{2n^2}\sum_{i,j=1}^n\Big(\frac{ij}{n^2}\Big)^{-\alpha} \rightarrow 0,
\end{align}
which holds for all $\alpha\in (0,1)$.
\\

\item{$\|W_{Q_n}-W\|_1\stackrel{P}{\to}\frac{1}{2(1-\alpha)^2}>0. $}

Using \eqref{eq:l1_act}, it suffices to show that 
$\|\|W_{Q_n}-W_{\widetilde{Q}_n}\|_1\stackrel{P}{\to} \frac{1}{2(1-\alpha)^2}$. But this follows on noting that
\begin{align*}
    \|W_{Q_n}-W_{\widetilde{Q}_n}\|_1&=\frac{1}{n^2}\sum_{i,j=1}^n \left(\frac{ij}{n^2}\right)^{-\alpha} \left\lvert B_{ij}-\frac{1}{2}\right\rvert \\
    &=\frac{1}{2n^2} \sum_{i,j=1}^n \left(\frac{ij}{n^2}\right)^{-\alpha} \rightarrow \frac{1}{2(1-\alpha)^2}.
\end{align*}

\item{$\limsup_{n\to\infty}\|W_{Q_n}\|_{q\Delta}<\infty$ for some $q>1$.}

For any $q\ge 1$  we have
\begin{align*}
    \int |W_{Q_n}|^{q\Delta}\le \frac{1}{n^2}\sum_{i,j=1}^n \Big(\frac{ij}{n^2}\Big)^{-\alpha q\Delta}\to \frac{1}{(1-\alpha q\Delta)^2},
\end{align*}
where the last convergence holds for $q\in \Big[1, \frac{1}{\alpha \Delta}\Big)$.

\item{$\|W_{Q_n}\|_{\infty} \stackrel{P}{\to}\infty$.}

For any $i\in [n]$ such that $B(i,1)=1$ we have
$$W_{Q_n}\Big(\frac{i}{n},\frac{1}{n}\Big)=\Big(\frac{i}{n^2}\Big)^{-\alpha}\ge n^{\alpha}.$$
Consequently, 
$$\P\Big(\|W_{Q_n}\|_\infty<n^{\alpha}\Big)\le \P\Big(B_{i1}=0\text{ for }2\le i\le n)=\frac{1}{2^{n-1}}\to 0,$$
which completes the proof.
\end{itemize}
			\end{proof}

\small


\begin{thebibliography}{44}
\providecommand{\natexlab}[1]{#1}
\providecommand{\url}[1]{\texttt{#1}}
\expandafter\ifx\csname urlstyle\endcsname\relax
  \providecommand{\doi}[1]{doi: #1}\else
  \providecommand{\doi}{doi: \begingroup \urlstyle{rm}\Url}\fi

\bibitem[Arcones(1992)]{arcones1992large}
Miguel~A Arcones.
\newblock Large deviations for {U}-statistics.
\newblock \emph{Journal of multivariate analysis}, 42\penalty0 (2):\penalty0
  299--301, 1992.

\bibitem[Arratia et~al.(2016)Arratia, Garibaldi, and
  Kilian]{arratia2016asymptotic}
Richard Arratia, Skip Garibaldi, and Joe Kilian.
\newblock Asymptotic distribution for the birthday problem with multiple
  coincidences, via an embedding of the collision process.
\newblock \emph{Random Structures \& Algorithms}, 48\penalty0 (3):\penalty0
  480--502, 2016.

\bibitem[Barra(2009)]{barra2009notes}
Adriano Barra.
\newblock Notes on {F}erromagnetic p-spin and {REM}.
\newblock \emph{Mathematical methods in the applied sciences}, 32\penalty0
  (7):\penalty0 783--797, 2009.

\bibitem[Basak and Mukherjee(2017)]{Basak2017}
Anirban Basak and Sumit Mukherjee.
\newblock Universality of the mean-field for the {P}otts model.
\newblock \emph{Probab. Theory Related Fields}, 168\penalty0 (3-4):\penalty0
  557--600, 2017.
\newblock ISSN 0178-8051.
\newblock \doi{10.1007/s00440-016-0718-0}.
\newblock URL \url{https://doi.org/10.1007/s00440-016-0718-0}.

\bibitem[Bhattacharya and Mukherjee(2019)]{bhattacharya2019monochromatic}
Bhaswar~B Bhattacharya and Sumit Mukherjee.
\newblock Monochromatic subgraphs in randomly colored graphons.
\newblock \emph{European Journal of Combinatorics}, 81:\penalty0 328--353,
  2019.

\bibitem[Bhattacharya et~al.(2017)Bhattacharya, Diaconis, and
  Mukherjee]{Bhattacharya2017}
Bhaswar~B. Bhattacharya, Persi Diaconis, and Sumit Mukherjee.
\newblock Universal limit theorems in graph coloring problems with connections
  to extremal combinatorics.
\newblock \emph{Ann. Appl. Probab.}, 27\penalty0 (1):\penalty0 337--394, 2017.
\newblock ISSN 1050-5164.
\newblock \doi{10.1214/16-AAP1205}.
\newblock URL \url{https://doi.org/10.1214/16-AAP1205}.

\bibitem[Bhattacharya et~al.(2020)Bhattacharya, Mukherjee, and
  Mukherjee]{bhattacharya2020second}
Bhaswar~B Bhattacharya, Somabha Mukherjee, and Sumit Mukherjee.
\newblock The second-moment phenomenon for monochromatic subgraphs.
\newblock \emph{SIAM Journal on Discrete Mathematics}, 34\penalty0
  (1):\penalty0 794--824, 2020.

\bibitem[Bhattacharya et~al.(2022)Bhattacharya, Fang, and
  Yan]{bhattacharya2022normal}
Bhaswar~B Bhattacharya, Xiao Fang, and Han Yan.
\newblock Normal approximation and fourth moment theorems for monochromatic
  triangles.
\newblock \emph{Random Structures \& Algorithms}, 60\penalty0 (1):\penalty0
  25--53, 2022.

\bibitem[Bhattacharya et~al.(2023)Bhattacharya, Deb, and
  Mukherjee]{bhattacharya2023gibbs}
Sohom Bhattacharya, Nabarun Deb, and Sumit Mukherjee.
\newblock Gibbs measures with multilinear forms.
\newblock \emph{arXiv preprint arXiv:2307.14600}, 2023.

\bibitem[Bhattacharya et~al.(2025)Bhattacharya, Mukherjee, and
  Ray]{bhattacharya2025sharp}
Sohom Bhattacharya, Rajarshi Mukherjee, and Gourab Ray.
\newblock Sharp signal detection under ferromagnetic ising models.
\newblock \emph{IEEE Transactions on Information Theory}, 2025.

\bibitem[Borgs et~al.(2008)Borgs, Chayes, Lov\'{a}sz, S\'{o}s, and
  Vesztergombi]{borgsdense1}
C.~Borgs, J.~T. Chayes, L.~Lov\'{a}sz, V.~T. S\'{o}s, and K.~Vesztergombi.
\newblock Convergent sequences of dense graphs. {I}. {S}ubgraph frequencies,
  metric properties and testing.
\newblock \emph{Adv. Math.}, 219\penalty0 (6):\penalty0 1801--1851, 2008.
\newblock ISSN 0001-8708.
\newblock \doi{10.1016/j.aim.2008.07.008}.
\newblock URL \url{https://doi.org/10.1016/j.aim.2008.07.008}.

\bibitem[Borgs et~al.(2012)Borgs, Chayes, Lov\'{a}sz, S\'{o}s, and
  Vesztergombi]{borgsdense2}
C.~Borgs, J.~T. Chayes, L.~Lov\'{a}sz, V.~T. S\'{o}s, and K.~Vesztergombi.
\newblock Convergent sequences of dense graphs {II}. {M}ultiway cuts and
  statistical physics.
\newblock \emph{Ann. of Math. (2)}, 176\penalty0 (1):\penalty0 151--219, 2012.
\newblock ISSN 0003-486X.
\newblock \doi{10.4007/annals.2012.176.1.2}.
\newblock URL \url{https://doi.org/10.4007/annals.2012.176.1.2}.

\bibitem[Borgs et~al.(2018)Borgs, Chayes, Cohn, and Zhao]{borgs2018p}
Christian Borgs, Jennifer~T Chayes, Henry Cohn, and Yufei Zhao.
\newblock An ${L}^p$ theory of sparse graph convergence {II}: Ld convergence,
  quotients and right convergence.
\newblock \emph{The Annals of Probability}, 46\penalty0 (1):\penalty0 337--396,
  2018.

\bibitem[Borgs et~al.(2019{\natexlab{a}})Borgs, Chayes, Cohn, and Zhao]{bc_lpi}
Christian Borgs, Jennifer Chayes, Henry Cohn, and Yufei Zhao.
\newblock An ${L}^p$ theory of sparse graph convergence {I}: Limits, sparse
  random graph models, and power law distributions.
\newblock \emph{Transactions of the American Mathematical Society},
  372\penalty0 (5):\penalty0 3019--3062, 2019{\natexlab{a}}.

\bibitem[Borgs et~al.(2019{\natexlab{b}})Borgs, Chayes, Cohn, and
  Zhao]{BorgsLPII}
Christian Borgs, Jennifer~T. Chayes, Henry Cohn, and Yufei Zhao.
\newblock An {$L^p$} theory of sparse graph convergence {I}: {L}imits, sparse
  random graph models, and power law distributions.
\newblock \emph{Trans. Amer. Math. Soc.}, 372\penalty0 (5):\penalty0
  3019--3062, 2019{\natexlab{b}}.
\newblock ISSN 0002-9947.
\newblock \doi{10.1090/tran/7543}.
\newblock URL \url{https://doi.org/10.1090/tran/7543}.

\bibitem[Cerquetti and Fortini(2006)]{cerquetti2006poisson}
Annalisa Cerquetti and Sandra Fortini.
\newblock A poisson approximation for coloured graphs under exchangeability.
\newblock \emph{Sankhy{\=a}: The Indian Journal of Statistics}, pages 183--197,
  2006.

\bibitem[Cover and Thomas(2006)]{Cover2006}
Thomas~M. Cover and Joy~A. Thomas.
\newblock \emph{Elements of information theory}.
\newblock Wiley-Interscience [John Wiley \& Sons], Hoboken, NJ, second edition,
  2006.
\newblock ISBN 978-0-471-24195-9; 0-471-24195-4.

\bibitem[DasGupta(2005)]{dasgupta2005matching}
Anirban DasGupta.
\newblock The matching, birthday and the strong birthday problem: a
  contemporary review.
\newblock \emph{Journal of Statistical Planning and Inference}, 130\penalty0
  (1-2):\penalty0 377--389, 2005.

\bibitem[Deb and Mukherjee(2023)]{deb2020fluctuations}
Nabarun Deb and Sumit Mukherjee.
\newblock {Fluctuations in mean-field {I}sing models}.
\newblock \emph{The Annals of Applied Probability}, 33\penalty0 (3):\penalty0
  1961 -- 2003, 2023.
\newblock \doi{10.1214/22-AAP1857}.
\newblock URL \url{https://doi.org/10.1214/22-AAP1857}.

\bibitem[Dembo and Zeitouni(2009)]{DZ}
A.~Dembo and O.~Zeitouni.
\newblock \emph{Large Deviations Techniques and Applications}.
\newblock Stochastic Modelling and Applied Probability. Springer Berlin
  Heidelberg, 2009.
\newblock ISBN 9783642033117.
\newblock URL \url{https://books.google.com/books?id=iT9JRlGPx5gC}.

\bibitem[Dembo and Montanari(2010)]{AmirAndrea2010}
Amir Dembo and Andrea Montanari.
\newblock Gibbs measures and phase transitions on sparse random graphs.
\newblock \emph{Braz. J. Probab. Stat.}, 24\penalty0 (2):\penalty0 137--211,
  2010.
\newblock ISSN 0103-0752.
\newblock \doi{10.1214/09-BJPS027}.
\newblock URL
  \url{https://doi-org.ezproxy.cul.columbia.edu/10.1214/09-BJPS027}.

\bibitem[Dembo et~al.(2014)Dembo, Montanari, Sly, and Sun]{dembo2014replica}
Amir Dembo, Andrea Montanari, Allan Sly, and Nike Sun.
\newblock The replica symmetric solution for potts models on d-regular graphs.
\newblock \emph{Communications in Mathematical Physics}, 327\penalty0
  (2):\penalty0 551--575, 2014.

\bibitem[Den~Hollander(2008)]{Hollander}
Frank Den~Hollander.
\newblock \emph{Large deviations}, volume~14.
\newblock American Mathematical Soc., 2008.

\bibitem[Diaconis and Mosteller(2006)]{diaconis2006methods}
Persi Diaconis and Frederick Mosteller.
\newblock Methods for studying coincidences.
\newblock In \emph{Selected Papers of Frederick Mosteller}, pages 605--622.
  Springer, 2006.

\bibitem[Eichelsbacher and L{\"o}we(1995)]{eichelsbacher1995large}
Peter Eichelsbacher and Matthias L{\"o}we.
\newblock A large deviation principle form-variate von mises-statistics and
  u-statistics.
\newblock \emph{Journal of Theoretical Probability}, 8\penalty0 (4):\penalty0
  807--824, 1995.

\bibitem[Eichelsbacher and Martschink(2015)]{eichelsbacher2015rates}
Peter Eichelsbacher and Bastian Martschink.
\newblock On rates of convergence in the {C}urie--{W}eiss--{P}otts model with
  an external field.
\newblock In \emph{Annales de l'IHP Probabilit{\'e}s et statistiques},
  volume~51, pages 252--282, 2015.

\bibitem[Eichelsbacher and Schmock(2002)]{eichelsbacher2002large}
Peter Eichelsbacher and Uwe Schmock.
\newblock Large deviations of $ u $-empirical measures in strong topologies and
  applications.
\newblock In \emph{Annales de l'IHP Probabilit{\'e}s et statistiques},
  volume~38, pages 779--797, 2002.

\bibitem[Eichelsbacher and Zajic(2003)]{eichelsbacher2003moderate}
Peter Eichelsbacher and Tim Zajic.
\newblock Moderate deviations for mean-field gibbs measures.
\newblock \emph{Bernoulli}, 9\penalty0 (1):\penalty0 67--95, 2003.

\bibitem[Ellis and Newman(1978)]{Ellis1978}
Richard~S. Ellis and Charles~M. Newman.
\newblock The statistics of {C}urie-{W}eiss models.
\newblock \emph{J. Statist. Phys.}, 19\penalty0 (2):\penalty0 149--161, 1978.
\newblock ISSN 0022-4715.
\newblock \doi{10.1007/BF01012508}.
\newblock URL
  \url{https://doi-org.ezproxy.cul.columbia.edu/10.1007/BF01012508}.

\bibitem[Ellis and Wang(1990)]{ellis1990limit}
Richard~S Ellis and Kongming Wang.
\newblock Limit theorems for the empirical vector of the
  {C}urie-{W}eiss-{P}otts model.
\newblock \emph{Stochastic processes and their applications}, 35\penalty0
  (1):\penalty0 59--79, 1990.

\bibitem[Frieze and Kannan(1999)]{FriezeKannan1999}
Alan Frieze and Ravi Kannan.
\newblock Quick approximation to matrices and applications.
\newblock \emph{Combinatorica}, 19\penalty0 (2):\penalty0 175--220, 1999.
\newblock ISSN 0209-9683.
\newblock \doi{10.1007/s004930050052}.
\newblock URL \url{https://doi.org/10.1007/s004930050052}.

\bibitem[Gandolfo et~al.(2010)Gandolfo, Ruiz, and Wouts]{gandolfo2010limit}
Daniel Gandolfo, Jean Ruiz, and Marc Wouts.
\newblock Limit theorems and coexistence probabilities for the {C}urie--{W}eiss
  {P}otts model with an external field.
\newblock \emph{Stochastic processes and their applications}, 120\penalty0
  (1):\penalty0 84--104, 2010.

\bibitem[Heringa et~al.(1989)Heringa, Bl{\"o}te, and
  Hoogland]{heringa1989phase}
JR~Heringa, HWJ Bl{\"o}te, and A~Hoogland.
\newblock Phase transitions in self-dual {I}sing models with multispin
  interactions and a field.
\newblock \emph{Physical review letters}, 63\penalty0 (15):\penalty0 1546,
  1989.

\bibitem[Ising(1925)]{ising1925beitrag}
Ernst Ising.
\newblock Beitrag zur theorie des {F}erromagnetismus.
\newblock \emph{Zeitschrift f{\"u}r Physik}, 31\penalty0 (1):\penalty0
  253--258, 1925.

\bibitem[Liu et~al.(2019)Liu, Sinclair, and Srivastava]{liu2019ising}
Jingcheng Liu, Alistair Sinclair, and Piyush Srivastava.
\newblock The {I}sing partition function: Zeros and deterministic
  approximation.
\newblock \emph{Journal of Statistical Physics}, 174\penalty0 (2):\penalty0
  287--315, 2019.

\bibitem[Lov\'{a}sz(2012)]{Lovasz2012}
L\'{a}szl\'{o} Lov\'{a}sz.
\newblock \emph{Large networks and graph limits}, volume~60 of \emph{American
  Mathematical Society Colloquium Publications}.
\newblock American Mathematical Society, Providence, RI, 2012.
\newblock ISBN 978-0-8218-9085-1.
\newblock \doi{10.1090/coll/060}.
\newblock URL \url{https://doi.org/10.1090/coll/060}.

\bibitem[Mukherjee and Bhattacharya(2020)]{mukherjee2020replica}
Somabha Mukherjee and Bhaswar~B Bhattacharya.
\newblock Replica symmetry in upper tails of mean-field hypergraphs.
\newblock \emph{Advances in Applied Mathematics}, 119:\penalty0 102047, 2020.

\bibitem[Mukherjee et~al.(2022)Mukherjee, Son, and
  Bhattacharya]{mukherjee2020estimation}
Somabha Mukherjee, Jaesung Son, and Bhaswar~B Bhattacharya.
\newblock Estimation in tensor ising models.
\newblock \emph{Information and Inference: A Journal of the IMA}, 11\penalty0
  (4):\penalty0 1457--1500, 2022.

\bibitem[Potts(1952)]{potts1952some}
Renfrey~Burnard Potts.
\newblock Some generalized order-disorder transformations.
\newblock In \emph{Mathematical proceedings of the cambridge philosophical
  society}, volume~48, pages 106--109. Cambridge University Press, 1952.

\bibitem[Sly and Sun(2014)]{Sly2014}
Allan Sly and Nike Sun.
\newblock Counting in two-spin models on {$d$}-regular graphs.
\newblock \emph{Ann. Probab.}, 42\penalty0 (6):\penalty0 2383--2416, 2014.
\newblock ISSN 0091-1798.
\newblock \doi{10.1214/13-AOP888}.
\newblock URL \url{https://doi.org/10.1214/13-AOP888}.

\bibitem[Suzuki and Fisher(1971)]{suzuki1971zeros}
Masuo Suzuki and Michael~E Fisher.
\newblock Zeros of the partition function for the {H}eisenberg,
  {F}erroelectric, and general {I}sing models.
\newblock \emph{Journal of Mathematical Physics}, 12\penalty0 (2):\penalty0
  235--246, 1971.

\bibitem[Turban(2016)]{turban2016one}
Lo{\"\i}c Turban.
\newblock One-dimensional {I}sing model with multispin interactions.
\newblock \emph{Journal of Physics A: Mathematical and Theoretical},
  49\penalty0 (35):\penalty0 355002, 2016.

\bibitem[Wu(1982)]{wu1982potts}
Fa-Yueh Wu.
\newblock The potts model.
\newblock \emph{Reviews of modern physics}, 54\penalty0 (1):\penalty0 235,
  1982.

\bibitem[Yamashiro et~al.(2019)Yamashiro, Ohkuwa, Nishimori, and
  Lidar]{yamashiro2019dynamics}
Yu~Yamashiro, Masaki Ohkuwa, Hidetoshi Nishimori, and Daniel~A Lidar.
\newblock Dynamics of reverse annealing for the fully connected p-spin model.
\newblock \emph{Physical Review A}, 100\penalty0 (5):\penalty0 052321, 2019.

\end{thebibliography}
\end{document}